\def\MR#1{}
\theoremstyle{plain}
\newtheorem{theorem}{Theorem}[section]
\newtheorem{lemma}[theorem]{Lemma}
\newtheorem{lem}[theorem]{Lemma}
\newtheorem{prop}[theorem]{Proposition}
\newtheorem{cor}[theorem]{Corollary}
\newtheorem*{conj}{Conjecture}
\newtheorem{thm}[theorem]{Theorem}
\theoremstyle{definition}
\newtheorem{remark}[theorem]{Remark}
\newtheorem{example}[theorem]{Example}
\newtheorem{mthm}{Theorem}
\newtheorem*{mcor}{Corollary}
\numberwithin{equation}{section}
\renewcommand{\setminus}{\smallsetminus}
\newcommand{\into}{\hookrightarrow} 
\newcommand{\onto}{\twoheadrightarrow} 
\DeclareMathOperator{\rk}{rk}
\newcommand{\bmu}{\boldsymbol{\mu}} 
\DeclareMathOperator{\Spec}{Spec} 
\DeclareMathOperator{\Proj}{Proj} 
\DeclareMathOperator{\Pic}{Pic}
\DeclareMathOperator{\Cl}{Cl}
\newcommand{\Newt}[1]{\mathrm{Newt}(#1)} 
\newcommand{\Vfour}{\mathscr{V}_4} 
\newcommand{\Kss}[2]{ \mathcal{M}^\mathrm{Kss}_{#1, #2} } 
\newcommand{\Kps}[2]{M^\mathrm{Kps}_{#1, #2}} 
\newcommand\cD{\mathcal{D}}
\newcommand\cF{\mathcal{F}}
\newcommand\cG{\mathcal{G}}
\newcommand\cL{\mathcal{L}}
\newcommand\cM{\mathcal{M}}
\newcommand\cO{\mathcal{O}}
\newcommand\cR{\mathcal{R}}
\newcommand\cS{\mathcal{S}}
\newcommand\cX{\mathscr{X}}
\renewcommand\AA{\mathbb{A}}
\newcommand\CC{\mathbb{C}}
\newcommand\NN{\mathbb{N}}
\newcommand\PP{\mathbb{P}}
\newcommand\QQ{\mathbb{Q}}
\newcommand\RR{\mathbb{R}}
\newcommand\ZZ{\mathbb{Z}}
\newcommand{\Kstable}{K\nobreakdash-stable}
\newcommand{\Kstability}{K\nobreakdash-stability}
\newcommand{\Kpolystable}{K\nobreakdash-polystable}
\newcommand{\Kmoduli}{K\nobreakdash-moduli}
\newcommand{\threefold}{$3$\nobreakdash-fold}
\newcommand{\threefolds}{$3$\nobreakdash-folds}
\renewcommand{\leqslant}{\leq}
\renewcommand{\geqslant}{\geq}
\newcommand{\bP}{\mathbb{P}}
\newcommand{\bC}{\mathbb{C}}
\newcommand{\bA}{\mathbb{A}}
\newcommand{\bQ}{\mathbb{Q}}
\newcommand{\bG}{\mathbb{G}}
\newcommand{\fM}{\mathfrak{M}}
\newcommand{\vol}{\mathrm{vol}}
\newcommand{\Aut}{\mathrm{Aut}}
\newcommand{\Fut}{\mathrm{Fut}}
\newcommand{\ord}{\mathrm{ord}}
\newcommand{\cK}{\mathcal{K}}
\newcommand{\cZ}{\mathcal{Z}}
\newcommand{\cY}{\mathcal{Y}}
\newcommand{\cN}{\mathcal{N}}
\newcommand{\sV}{\mathscr{V}}
\newcommand{\sW}{\mathscr{W}}
\newcommand{\tcZ}{\widetilde{\mathcal{Z}}}
\newcommand{\tcS}{\widetilde{\mathcal{S}}}
\newcommand{\tcG}{\widetilde{\mathcal{G}}}
\newcommand{\tcY}{\widetilde{\mathcal{Y}}}
\newcommand{\tcD}{\widetilde{\mathcal{D}}}
\newcommand{\Supp}{\mathrm{Supp}}
\newcommand{\hvol}{\widehat{\mathrm{vol}}}
\begin{document}
\author[H.\ Abban]{Hamid Abban}
\address{School of Mathematical Sciences\\University of Nottingham\\Nottingham\\NG7 2RD\\United Kingdom}
\email{hamid.abban@nottingham.ac.uk}
\author[I.\ Cheltsov]{Ivan Cheltsov}
\address{School of Mathematics\\University of Edinburgh\\Edinburgh\\EH9 3FD\\United Kingdom}
\email{i.cheltsov@ed.ac.uk}
\author[A.\ M.\ Kasprzyk]{Alexander Kasprzyk}
\address{School of Mathematical Sciences\\University of Nottingham\\Nottingham\\NG7 2RD\\United Kingdom}
\email{a.m.kasprzyk@nottingham.ac.uk}
\author[Y.\ Liu]{Yuchen Liu}
\address{Department of Mathematics\\Northwestern University\\Evanston\\IL 60208\\USA}
\email{yuchenl@northwestern.edu}
\author[A.\ Petracci]{Andrea Petracci}
\address{Dipartimento di Matematica\\Universit\`a di Bologna\\Piazza di Porta San Donato 5\\Bologna 40126\\Italy}
\email{a.petracci@unibo.it}
\keywords{K-stability, Quartic threefolds, K-moduli, Fano hypersurfaces}
\subjclass[2020]{14J10 (Primary); 32Q20, 14J45, 14J30, 14J70 (Secondary)}
\title{On K-moduli of quartic threefolds}
\begin{abstract}
The family of smooth Fano \threefolds{} with Picard rank~$1$ and anticanonical volume~$4$ consists of quartic \threefolds{} and of double covers of the $3$\nobreakdash-dimensional quadric branched along an octic surface. They can all be parametrised as complete intersections of a quadric and a quartic in the weighted projective space~$\PP(1,1,1,1,1,2)$, denoted by $X_{2,4}\!\subset\!\PP(1^5,2)$; all such smooth complete intersections are \Kstable. With the aim of investigating the compactification of the moduli space of quartic \threefolds{} given by \Kstability, we exhibit three phenomena: (i)\,there exist \Kpolystable{} complete intersection $X_{2,2,4}\!\subset\!\PP(1^5,2^2)$  Fano \threefolds{} which deform to quartic \threefolds{} and are neither quartic \threefolds{} nor double covers of quadric \threefolds{} -- in other words, the closure of the locus parametrising complete intersections $X_{2,4\!}\subset\!\PP(1^5,2)$ in the K-moduli contains elements that are not of this type; (ii)\,any quasi-smooth $X_{2,2,4}\!\subset\!\PP(1^5,2^2)$  is K-polystable;
(iii)\, the closure in the K-moduli space of the locus parametrising complete intersections $X_{2,2,4} \subset \PP(1^5,2^2)$ which are not complete intersections $X_{2,4} \subset \PP(1^5,2)$ contains only points which correspond to complete intersections $X_{2,2,4} \subset \PP(1^5,2^2)$.
\end{abstract}
\maketitle
\section{Introduction}
A~\emph{Fano variety} is a normal projective variety over~$\CC$ for which the anticanonical divisor is $\QQ$\nobreakdash-Cartier and ample. There has been spectacular recent progress on constructing moduli spaces of Fano varieties using \Kstability~\cite{ABHLX, xu_minimizing, BLX, jiang_boundedness, blum_xu_uniqueness, xu_zhuang, properness_K_moduli, projectivity_K_moduli_final, lwx}. For each positive integer~$n$ and for every positive rational number~$v$ there exists an Artin stack~$\Kss{n}{v}$, called the~\emph{\Kmoduli{} stack}, which is of finite type over~$\CC$ and which parametrises K\nobreakdash-semistable $n$\nobreakdash-dimensional Fano varieties with anticanonical volume~$v$. Moreover, this stack admits a good moduli space~$\Kps{n}{v}$, called the~\emph{\Kmoduli{} space}, which is projective over~$\CC$. The closed points of~$\Kps{n}{v}$ are in one-to-one correspondence with \Kpolystable{} $n$\nobreakdash-dimensional Fano varieties with anticanonical volume~$v$. We refer to~\cite{xu_survey} for a survey on these topics.

The next natural step is to investigate the geometry of K\nobreakdash-moduli spaces, providing explicit descriptions when possible. \Kmoduli{} of smooth(able) $2$\nobreakdash-dimensional Fano varieties have been studied by Mabuchi--Mukai~\cite{mabuchi_mukai} and by Odaka--Spotti--Sun~\cite{odaka_spotti_sun}. More generally, one of the most well-known classes of Fano varieties are hypersurfaces of degree~$d$ in $n$\nobreakdash-dimensional projective space~$\PP^n$, with~$d\leq n$; all smooth Fano hypersurfaces are conjectured to be \Kpolystable~\cite[Part~3]{xu_survey}. Substantial progress has made towards clarifying this conjecture~\cite[Theorem B]{AZ2}, however very little is currently known about their \Kmoduli. One readily available compact moduli space for Fano hypersurfaces is the GIT~moduli space, and Liu--Xu~\cite{liu_xu_cubic} showed that, for cubic \threefolds, the \Kmoduli{} space coincides with the GIT~moduli space. An analogous result for cubic $4$\nobreakdash-folds was recently proved by Liu~\cite{liu_cubic4folds}. There are additional results in $2$~dimensions, and in other settings~\cite{wall_crossing_K_moduli, K-moduli_quadric_K3, adl_quartic_K3, LiuPetracci}. A particular observation is the following conjecture, which is verified up to dimension $4$.
\begin{conj} K-polystable limits of cubic hypersurfaces are cubic hypersurfaces.
\end{conj}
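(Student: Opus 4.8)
The plan is to establish the statement in each dimension in which it is known --- for cubic surfaces, cubic \threefolds{}, and cubic $4$\nobreakdash-folds --- by the method of Liu--Xu and Liu, namely by identifying the relevant \Kmoduli{} space with a Geometric Invariant Theory quotient, and then to explain why the general case is the real obstruction. Let $n\geq 2$ be the dimension of the cubic hypersurface $X\subset\PP^{n+1}$, so $-K_X=\cO_X(n-1)$ and $v:=(-K_X)^n=3(n-1)^n$. By the results cited in the introduction (and classically for $n=2$) every smooth cubic $n$\nobreakdash-fold is \Kpolystable{}; these form a dense open substack of $\Kss{n}{v}$. Let $\fM^{\mathrm{GIT}}$ be the GIT quotient of the space of cubic forms in $n+2$ variables by $\mathrm{PGL}_{n+2}$ with its natural linearisation --- a projective variety whose closed points are the GIT\nobreakdash-polystable cubic $n$\nobreakdash-folds, with the smooth cubics forming a dense open $\cU^{\mathrm{GIT}}$ canonically identified with the smooth locus of $\Kps{n}{v}$.

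\textbf{Step 1 (the analytic core).} Show that every GIT\nobreakdash-semistable cubic $n$\nobreakdash-fold is a klt Fano variety and is \Kstable{} (and that GIT\nobreakdash-polystable orbits with positive\nobreakdash-dimensional stabiliser are \Kpolystable{}). A GIT\nobreakdash-semistable cubic has only mild hypersurface singularities --- for small $n$, an explicit finite list --- so it is klt, and one bounds the stability threshold $\delta(X)\geq 1$ (with strict inequality away from the strictly semistable locus) by combining a global estimate on $\PP^{n+1}$ with an Abban--Zhuang\nobreakdash-style estimate of the local stability threshold at the singular points. This is the step carried out by hand for $n=3,4$; for general $n$ it would follow from a uniform K\nobreakdash-stability statement for mildly singular cubic hypersurfaces, which is not currently available.

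\textbf{Steps 2--3 (from GIT to \Kmoduli{}, via properness).} Next one shows that a \Kpolystable{} limit $X_0$ of smooth cubic $n$\nobreakdash-folds is itself a cubic hypersurface in $\PP^{n+1}$: $X_0$ is a klt Fano variety, and one must check that a fractional multiple of $-K_{X_0}$ embeds it into $\PP^{n+1}$ with image a cubic --- using that the relevant linear system has the expected dimension (no jumping, by klt\nobreakdash-ness/\Kstability{} of $X_0$) and that limits within the Hilbert scheme of cubics, which is the projective space of cubic forms, remain cubics. Granting this --- it is part of the analysis of Liu--Xu and Liu in dimensions $3$ and $4$ --- the CM line bundle on $\fM^{\mathrm{GIT}}$ agrees up to a positive multiple with the descent of the GIT linearisation, and this produces a morphism $\fM^{\mathrm{GIT}}\to\Kps{n}{v}$ restricting to the identity on $\cU^{\mathrm{GIT}}$. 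This morphism is injective (distinct GIT\nobreakdash-polystable cubics are non\nobreakdash-isomorphic Fano varieties, hence distinct \Kpolystable{} points), is a map of projective varieties pulling back an ample class to an ample class, and is an isomorphism over the dense open $\cU^{\mathrm{GIT}}$; hence it is an isomorphism onto a union of irreducible components of $\Kps{n}{v}$, and a connectedness argument (every boundary point is a degeneration of smooth cubics) identifies this with all of $\Kps{n}{v}$. Thus, for $n\in\{2,3,4\}$, every \Kpolystable{} limit of cubic hypersurfaces is again a cubic hypersurface.

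\textbf{The main obstacle.} The crux, and the reason the statement is open for $n\geq 5$, lies in the parts of Steps~1 and~2 that rule out \Kpolystable{} degenerations \emph{outside} the family of cubic hypersurfaces --- non\nobreakdash-normal limits, or smoothable Fano varieties of a different deformation type carrying the same numerical invariants. This is precisely the phenomenon that the present paper exhibits for quartic \threefolds{} (item~(i) of the abstract), so a proof for cubics must exploit features special to degree~$3$: the boundedness and explicit classification of the singularities occurring in GIT\nobreakdash-semistable cubics and the resulting control of $\delta$. Absent such a classification in high dimension, no exotic \Kpolystable{} limit can presently be excluded.
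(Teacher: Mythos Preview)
The statement you are attempting to prove is labelled in the paper as a \emph{Conjecture}, not a theorem. The paper neither proves it nor claims to; indeed, the sentence immediately preceding the conjecture says only that it ``is verified up to dimension $4$'', and the entire point of the paper is to exhibit that the analogous statement \emph{fails} for quartic threefolds. There is therefore no ``paper's own proof'' to compare against.

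Your write-up is accordingly not a proof of the conjecture either: you yourself acknowledge that Step~1 ``is not currently available'' for $n\geq 5$ and that ``no exotic \Kpolystable{} limit can presently be excluded'' in high dimension. What you have produced is a reasonable sketch of the Liu--Xu and Liu strategy in the known low-dimensional cases, together with an honest account of where it breaks down --- but this is exposition of the state of the art, not a proof of the conjecture as stated. If the task was to prove the statement, the correct response is that it is open; if the task was to survey what is known, your outline is broadly accurate, though Step~2 as written leans on assertions (``Granting this --- it is part of the analysis of Liu--Xu and Liu'') that are themselves nontrivial theorems requiring substantial local volume estimates and gap arguments, not formal consequences of the setup.
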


The simplest case in higher dimensions where the expected agreement between GIT~moduli and \Kmoduli{} fails, or equivalently the conjecture above is invalid for, is the moduli space of quartic \threefolds. Here it is known that their \Kmoduli{} space contains elements that are not in the GIT~moduli.

\subsection{\Kmoduli{} of quartic \threefolds}
\label{sec:intro_quartic_3folds}
Let~$\Vfour$ denote the family of~\emph{smooth} Fano \threefolds{} with Picard rank~$1$ and anticanonical volume~$4$. Members of~$\Vfour$ are either:
\begin{enumerate}[(a)]
\item smooth quartic hypersurfaces in~$\PP^4$, simply called~\emph{smooth quartic \threefolds}; or
\item double covers of the smooth quadric \threefold{} with branch divisor of degree~$8$, often called~\emph{hyperelliptic}.
\end{enumerate}
By~\cite{cheltsov_lct_hypersurface, fujita_not_small_alpha,dervan_covers} each smooth Fano \threefold{} in the family~$\Vfour$ is \Kstable. It is not known how to characterise singular quartic \threefolds{} which are K-(poly/semi)stable. Since not every member of~$\Vfour$ is a quartic \threefold, the connected component of the \Kmoduli{} space~$\Kps{3}{4}$ containing the members of~$\Vfour$ does not coincide with the GIT~moduli space of quartic \threefolds. However, every smooth member of~$\Vfour$ is a complete intersections of type~$(2,4)$ in~$\PP(1^5,2)$; this can be seen as follows (see also~\cite[Example~4.3]{mori_complete_intersections}, the introduction of~\cite{ottem_schreieder}, and~\cite[Example~3.5.2]{calabi_problem_3folds}). Let~$x_0, \dots, x_4,y$ be the homogeneous coordinates of~$\PP(1^5,2)$ with degrees~$1,\dots,1,2$ respectively. A $(2,4)$\nobreakdash-complete intersection~$X$ is given by the vanishing of
\[
f_4(x_0, \dots, x_4) + f_2(x_0, \dots, x_4) y + a y^2 \qquad \text{and} \qquad g_2(x_0, \dots, x_4) + b y,
\]
where~$f_2$ and~$g_2$ are quadrics,~$f_4$ is a quartic, and~$a, b \in \CC$. If~$X$ is smooth then~$ a \neq 0$ or~$b \neq 0$. If~$b \neq 0$, from the second equation we can express~$y$ in terms of the~$x_i$, therefore~$y$ is a redundant variable. This implies that~$X$ is a quartic hypersurface in~$\PP^4$. If~$b = 0$ then~$a \neq 0$ so, up to scaling,~$a=1$. By completing the square (i.e.\ applying the automorphism of~$\PP(1^5,2)$ given by~$y \mapsto y + \frac{1}{2} f_2(x)$) we can assume that~$f_2 = 0$; in this case~$X$ is the double cover of the quadric \threefold{}~$\{ g_2(x) = 0 \} \subset \PP^4$ branched along the surface~$\{ f_4(x) = g_2(x) = 0 \}$.


It is natural to ask whether the connected component of the \Kmoduli{} space~$\Kps{3}{4}$ containing the members of~$\Vfour$ parametrises $(2,4)$\nobreakdash-complete intersections in~$\PP(1^5,2)$. If this were true, it might suggest that this component of \Kmoduli{} can be constructed via some form of non-reductive~GIT by taking the moduli space of such complete intersections. We first prove that this is not the case.

\begin{mthm}[{cf.\ Proposition~\ref{prop:specific}, Theorem~\ref{theorem:IHES}, and Theorem~\ref{ALL}}] \label{thm:main}
There exist \Kpolystable{} non-smooth Fano \threefolds{} that are smoothable to a quartic \threefold{} and that are not $(2,4)$\nobreakdash-complete intersections in~$\PP(1^5,2)$.
\end{mthm}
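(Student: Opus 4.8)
The plan is to exhibit an explicit singular Fano threefold $X$ realised as a complete intersection $X_{2,2,4}\subset\PP(1^5,2^2)$ and to prove three things about it: that $X$ is \Kpolystable, that $X$ deforms (smooths) to a quartic \threefold, and that $X$ is not isomorphic to any $(2,4)$-complete intersection in $\PP(1^5,2)$. Write $x_0,\dots,x_4,y_1,y_2$ for the coordinates on $\PP(1^5,2^2)$ with degrees $1,1,1,1,1,2,2$. A sufficiently general complete intersection of two quadrics and a quartic in these coordinates is a quasi-smooth Fano \threefold{} with $-K_X^3=4$; the natural first step is to write down such an $X$ with a single isolated orbifold point coming from one of the degree-$2$ coordinates (e.g.\ a cyclic quotient singularity of type $\tfrac12(1,1,1)$) while keeping the rest of the variety smooth, and to verify quasi-smoothness of a chosen representative by a Jacobian-criterion computation on the affine charts. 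That $X$ is neither a quartic \threefold{} nor a hyperelliptic double cover follows once one checks its singularity or its class group: a quartic \threefold{} in $\PP^4$ with only quotient singularities of this mild type is factorial with $\Cl(X)=\ZZ$, and a glance at the possible defining equations (as in the $X_{2,4}$ analysis in the introduction, where $b=0$ forces a double cover and $b\neq0$ forces a quartic) shows that a genuinely ternary $X_{2,2,4}$ cannot be brought to either normal form by an automorphism of the ambient space; this is a bounded, finite case-check.

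The smoothability claim is handled by deformation theory of the complete intersection: one produces an explicit flat family over a disc whose general fibre is a $(2,4)$-complete intersection in $\PP(1^5,2)$ (hence, by the introduction's discussion, a smooth quartic \threefold{} or hyperelliptic member of $\Vfour$) and whose central fibre is $X$. Concretely, deform the two quadric equations $q_1(x,y),q_2(x,y)$ so that the linear form in $y_2$, say, acquires a nonzero coefficient $t$; for $t\neq0$ one eliminates $y_2$ and lands in $\PP(1^5,2)$, while at $t=0$ one recovers $X$. One must check the total space is $\QQ$-Gorenstein (the family is of $(2,2,4)$-complete intersections in the fixed toric ambient, so this is automatic away from the vertex coordinates and can be verified there directly), so that the special fibre is a \emph{$\QQ$-Gorenstein smoothable} Fano, which is what is needed for the \Kmoduli{} statement.

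The heart of the argument is \Kpolystability{} of the singular $X$; this is what I expect to be the main obstacle. The approach is the Abban--Zhuang method of estimating $\delta(X)$ via flags: fix a suitable point $p\in X$ (including the orbifold point), choose a plt flag $p\in Z\subset S\subset X$ with $S$ a well-chosen anticanonical (or pluri-anticanonical) divisor and $Z$ a curve, and bound the local $\delta$-invariant from below by the Abban--Zhuang inequality, reducing to the computation of a finite collection of volumes / Okounkov-body integrals on $S$ and on the surface. The orbifold point requires working on the quotient chart and tracking the extra factor $\tfrac1r$ in the local volumes, exactly as in the weighted-hypersurface computations of \cite{calabi_problem_3folds}. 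If this produces $\delta(X)\geq1$ one gets \Kstability; if the symmetry of $X$ forces $\delta(X)=1$ along a one-parameter-subgroup direction, one instead shows $X$ is \Kpolystable{} by identifying $\Aut(X)$ and checking that the relevant Futaki-type obstruction vanishes, or by exhibiting $X$ as the unique \Kpolystable{} degeneration in its $T$-orbit closure. The bookkeeping of the flag computation at the singular point, and ruling out destabilising divisorial valuations supported there, is the technically delicate part; everything else (quasi-smoothness, the class-group/normal-form check, the explicit smoothing family) is a finite computation.
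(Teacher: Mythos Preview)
Your proposal has a genuine structural gap in the very first step.  You propose to construct a pure $(2,2,4)$-complete intersection $X\subset\PP(1^5,2^2)$ with ``a single isolated orbifold point coming from one of the degree-$2$ coordinates''.  But such an $X$ is automatically a $(2,4)$-complete intersection in $\PP(1^5,2)$, so your example would fail the very conclusion you are trying to establish.  Indeed, each quadric equation has the form $q(x_0,\dots,x_4)+ay_1+by_2$; if $X$ misses, say, $p_2=[0{:}\cdots{:}0{:}0{:}1]$, then one of the quadrics has $b\neq 0$, and you can solve for $y_2$ and eliminate it, landing in $\PP(1^5,2)$ exactly as in the rank-analysis of the matrix $A$ in the introduction.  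The examples that work necessarily pass through \emph{both} vertices $p_1,p_2$ and therefore carry \emph{two} non-Gorenstein points.  This is not incidental: the paper's proof that $X$ is not a $(2,4)$-complete intersection is precisely the observation that any $(2,4)$-complete intersection in $\PP(1^5,2)$ is Gorenstein away from the single vertex of $\PP(1^5,2)$, hence has at most one non-Gorenstein point, whereas $X$ has two.  Your factoriality/class-group sketch does not obviously separate $X$ from the hyperelliptic $(2,4)$ case, and in any event rests on the wrong singularity count.  (Relatedly, the singular points of the correct $X$ are not $\tfrac12(1,1,1)$ quotient singularities: in the $y_i\neq 0$ chart one gets the affine cone over the del Pezzo surface $S=\{g=h=0\}\subset\PP^4$ polarised by $-2K_S$, which is a genuinely non-quotient klt singularity.)

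On the \Kpolystability{} step, your Abban--Zhuang outline is broadly in the spirit of what the paper does for the infinite family in \S\ref{sec:infinite-family}, but you are missing the key simplification that drives it: the correct $X$ carries a faithful action of $G=\CC^\ast\rtimes\ZZ/2\ZZ$ (the $\CC^\ast$ scales $y_1,y_2$ oppositely and the involution swaps them), so by Zhuang's equivariant criterion one only needs $\beta(\mathbf F)>0$ for $G$-invariant divisors $\mathbf F$, which forces the centre to lie in a very restricted locus and makes the flag computation tractable on the Mori Dream Space resolution $\widetilde X\to X$.  Without this equivariant reduction the bare $\delta$-estimate at the two cone points is genuinely hard.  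You also overlook the much cheaper route the paper takes first: there is a \emph{toric} $(2,2,4)$-complete intersection (Proposition~\ref{prop:specific}) for which \Kpolystability{} is the one-line check that the barycentre of the polar polytope is the origin, with no Abban--Zhuang input at all.  Your smoothing argument, by contrast, is essentially the right one and matches the paper.
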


\noindent An immediate consequence is:

\begin{mcor}
Let~$M$ be the connected component of the \Kmoduli{} space~$\Kps{3}{4}$ containing the members of~$\Vfour$. Then in $M$ there are points corresponding to \Kpolystable{} Fano \threefolds{} that are not $(2,4)$\nobreakdash-complete intersections in~$\PP(1^5,2)$.
\end{mcor}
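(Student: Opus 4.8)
Below is how I would approach proving Theorem~\ref{thm:main} (and, with it, the Corollary).

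The plan is to produce the required example as a special complete intersection $X_{2,2,4}\subset\PP(1^5,2^2)$ and to prove that it is \Kpolystable{} by the Abban--Zhuang flag method. Write the coordinates of $\PP(1^5,2^2)$ as $x_0,\dots,x_4$ (degree $1$) and $y_1,y_2$ (degree $2$), and a $(2,2,4)$-complete intersection as $\{Q_1=Q_2=F=0\}$ with $Q_i=q_i(x)+a_iy_1+b_iy_2$ and $F$ a form of degree $4$. First I would stratify this family by the rank of the matrix $M=\left(\begin{smallmatrix}a_1&b_1\\a_2&b_2\end{smallmatrix}\right)$. If $\rk M=2$, an automorphism of $\PP(1^5,2^2)$ eliminates $y_1,y_2$ and $X$ is a quartic hypersurface in $\PP^4$; if $\rk M=1$, a similar manipulation identifies $X$ with a $(2,4)$-complete intersection in $\PP(1^5,2)$; and if $\rk M=0$, i.e.\ neither quadric involves $y_1,y_2$, then the linear projection $[x_0:\cdots:x_4]$ realises $X$ as birational to a conic bundle over the quartic del Pezzo surface $\{q_1=q_2=0\}\subset\PP^4$, $X$ meets the singular locus $\{x_0=\cdots=x_4=0\}$ of $\PP(1^5,2^2)$ in (generically) two points, and there $X$ has singularities analytically isomorphic to the quotient by $\pm1$ of the affine cone over a quartic del Pezzo surface. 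A routine check gives that a general $\rk M=0$ member $X_0$ is a normal klt Fano \threefold{} with $(-K_{X_0})^3=4$; and since the whole parameter space of $(2,2,4)$-complete intersections is irreducible with general member a smooth quartic \threefold, $X_0$ is a flat limit of smooth quartic \threefolds, hence in particular smoothable to a quartic \threefold.

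Second, I would fix the explicit $\rk M=0$ example $X_0$ of Proposition~\ref{prop:specific}, chosen to have the mildest possible singularities and a convenient finite automorphism group; by the discussion above it is non-smooth. That $X_0$ is \emph{not} a $(2,4)$-complete intersection in $\PP(1^5,2)$ I would establish by counting non-Gorenstein points: $X_0$ has at least two singular points at which $-K_{X_0}$ is $\QQ$-Cartier of index exactly $2$, whereas a $(2,4)$-complete intersection in $\PP(1^5,2)$ has at most one such point --- namely the unique singular point of $\PP(1^5,2)$ --- since away from it it is a local complete intersection and hence Gorenstein. (Alternatively, this is subsumed by part~(iii), Theorem~\ref{ALL}, which describes the K-moduli closure of the non-$X_{2,4}$ locus.)

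Third --- and this is the crux --- I would prove that $X_0$ is \Kpolystable{}. Running the Abban--Zhuang method with the flag on $X_0$ cut out by the coordinate hyperplanes of $\PP(1^5,2^2)$, together with the induced filtrations and refinements on the resulting surface and curve, I would establish $\delta(X_0)\geq 1$, reducing to finitely many explicit estimates of $A/S$-type; the delicate input is the local analysis at the two quotient singular points coming from the weight-$2$ coordinates, where one must control log discrepancies and expected orders of vanishing of divisors over such singularities. To upgrade K-semistability to K-polystability I would then either prove $\delta(X_0)>1$ outright, or --- if $\delta(X_0)=1$ --- identify all minimising valuations, verify they are $\Aut(X_0)^{\circ}$-invariant, and conclude that the only special degenerations of $X_0$ with vanishing Futaki invariant are of product type. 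For the single variety $X_0$ this is Theorem~\ref{theorem:IHES}; Theorem~\ref{ALL} then upgrades it to the entire relevant family, simultaneously yielding parts~(ii) and~(iii).

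Combining the three steps, $X_0$ is a \Kpolystable{} non-smooth Fano \threefold, smoothable to a quartic \threefold, which is not a $(2,4)$-complete intersection in $\PP(1^5,2)$ --- this is Theorem~\ref{thm:main}. The Corollary is then immediate: the flat smoothing of $X_0$ induces a morphism from a curve into $\Kps{3}{4}$ joining $[X_0]$ to points representing smooth quartic \threefolds, so $[X_0]$ lies on the component $M$ and represents a \Kpolystable{} Fano \threefold{} that is not a $(2,4)$-complete intersection in $\PP(1^5,2)$. The step I expect to be the genuine obstacle is the third --- proving K-polystability of a \emph{singular} Fano \threefold: making the $\delta$-invariant estimates work uniformly across the orbifold points produced by the weight-$2$ coordinates, and ruling out strictly-semistable-but-not-polystable behaviour --- not the stratification of Step~1 nor the invariant-theoretic bookkeeping of Step~2.
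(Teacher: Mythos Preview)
Your outline is broadly correct and close to one of the paper's proofs, but you have conflated two distinct constructions, and in doing so you miss the paper's simplest argument.

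The variety of Proposition~\ref{prop:specific} is \emph{toric}. Its K-polystability is therefore not a ``crux'' at all: one simply computes that the barycentre of the polar polytope $P^\circ$ is the origin and invokes Berman's criterion. Your description of this $X_0$ as having ``the mildest possible singularities and a convenient finite automorphism group'' is off --- the toric example has non-canonical singularities (Proposition~\ref{prop:specific}\eqref{specific_5}) and its automorphism group contains a full $3$-torus. Your Step~2 (counting non-Gorenstein points to exclude a $(2,4)$-embedding in $\PP(1^5,2)$) is exactly Proposition~\ref{prop:specific}\eqref{specific_5}--\eqref{specific_6}, and your Step~1 stratification is the paper's \S1.2; but your proposed Step~3 would be substantial overkill for this example.

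The Abban--Zhuang approach you sketch is instead how the paper handles the \emph{infinite family} of Theorem~\ref{theorem:IHES} (general $f,g,h$), where no toric shortcut is available. There one does not use flags cut out by coordinate hyperplanes on $X$; rather one passes to a resolution $\widetilde{X}$, which is a conic bundle over a smooth quartic del Pezzo $S$, exploits the $G=\CC^\ast\rtimes\ZZ/2\ZZ$-action to restrict the possible centres (Lemma~\ref{lemma:G-fixed-points}), and takes as flag the preimage of a suitably chosen smooth conic on $S$. Theorem~\ref{ALL} is yet a third route, via cyclic covers and interpolation, and removes the genericity hypothesis of Theorem~\ref{theorem:IHES}; it is not an ``upgrade'' of the toric example. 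Once any one of these establishes Theorem~\ref{thm:main}, the Corollary is, as you say, immediate.
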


We provide two types of examples of Fano \threefolds{} which satisfy Theorem~\ref{thm:main}: three toric examples that we study in~\S\ref{sec:toric-examples}, and an infinite family that we study in~\S\ref{sec:infinite-family}. These examples are all $(2,2,4)$\nobreakdash-complete intersections in the weighted projective $6$-space $\PP(1^5,2^2)$.

We have seen that some K-polystable degenerations of smooth quartic \threefolds{} are hyperelliptics; in other words, K-moduli limits of quartics include hyperelliptics.
In Theorem~\ref{thm:main} we have seen that K-moduli limits of hyperelliptics include Fano \threefolds which are $(2,2,4)$-complete intersections in $\PP(1^5,2^2)$ which are not $(2,4)$-complete intersections in $\PP(1^5,2)$.
We call such \threefolds{} \emph{pure} $(2,2,4)$-complete intersections.
It is natural to wonder whether pure $(2,2,4)$-complete intersections degenerate further in the K-moduli space.
Surprisingly, the K-moduli limits of pure $(2,2,4)$-complete intersections are pure $(2,2,4)$-complete intersections:


\begin{mthm}[{cf.\ Proposition~\ref{prop:dP4} and Corollary~\ref{cor:moduli}}]
	In the K-moduli space $\Kps{3}{4}$, the locus of pure $(2,2,4)$- complete intersections in~$\PP(1^5,2^2)$ is closed.
\end{mthm}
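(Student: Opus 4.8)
The plan is to verify that the locus~$P\subset\Kps{3}{4}$ of points parametrising pure $(2,2,4)$-complete intersections in~$\PP(1^5,2^2)$ is stable under specialisation; since $P$ is the image in~$\Kps{3}{4}$ of the family of quasi-smooth pure $(2,2,4)$-complete intersections it is constructible, and since $\Kps{3}{4}$ is proper this yields that $P$ is closed. So I would start from a flat family $\cX\to C$ of \Kpolystable{} Fano \threefolds{} over a smooth pointed curve~$(C,0)$, which exists (possibly after a finite base change) by properness of~$\Kps{3}{4}$, whose fibre~$\cX_t$ is a pure $(2,2,4)$-complete intersection for every~$t\neq 0$; the goal is to show that the \Kpolystable{} limit~$\cX_0$ is again a pure $(2,2,4)$-complete intersection.

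The first step is to exploit Proposition~\ref{prop:dP4}: a pure $(2,2,4)$-complete intersection~$X$ carries a conic bundle structure $X\to S$ over a degree-$4$ del Pezzo surface $S=X_{2,2}\subset\PP^4$, the base~$S$ is recovered intrinsically from~$X$, and conversely $X$ is reconstructed from~$S$ together with a ``relative quartic''. I would then spread this out over~$C\setminus\{0\}$: the surfaces $S_t\subset\PP^4$ form a family of closed subschemes of bounded Hilbert polynomial, so by properness of the Hilbert scheme they acquire a flat limit $S_0\subset\PP^4$, and the conic bundle structures extend to a rational map $\cX\dashrightarrow\cS$ over~$C$ with special member~$S_0$. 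The crucial point is that $S_0$ is still a degree-$4$ del Pezzo surface with du Val singularities: this should follow by passing the property that~$\cX_0$ is \Kpolystable{} down to the base~$S_0$ of the conic bundle -- via adjunction along the fibres, viewing $(S_0,\Delta_0)$ as a log Fano pair with $\Delta_0$ the appropriately weighted discriminant locus -- together with the classification of \Kpolystable{} degenerations of degree-$4$ del Pezzo surfaces. Granting this, $\cX_0$ is the relative conic over $S_0=X_{2,2}\subset\PP^4$, hence a $(2,2,4)$-complete intersection in~$\PP(1^5,2^2)$, and the $(2,2,4)$-complete intersection structures on the~$\cX_t$ for $t\neq 0$ fit together with it into a family over~$C$.

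It then remains to check that~$\cX_0$ is \emph{pure}. For this I would work on the parameter space of all $(2,2,4)$-complete intersections: by the computation in~\S\ref{sec:intro_quartic_3folds}, such an intersection is a $(2,4)$-complete intersection in~$\PP(1^5,2)$ precisely when the $2\times 2$ matrix of coefficients of $y_1,y_2$ in a basis of the defining pencil of quadrics has rank at least~$1$, so purity is the \emph{closed} condition that this matrix vanishes. This condition is invariant under change of basis of the pencil and under $\Aut(\PP(1^5,2^2))$, which is generated by $\GL_5$ acting on the~$x_i$, by $\GL_2$ acting on the~$y_j$, and by the translations $y_j\mapsto y_j+q_j(x_0,\dots,x_4)$, each of which leaves the $(y_1,y_2)$-coefficient matrix fixed or acts on it linearly. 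Since the structures on the~$\cX_t$, $t\neq 0$, are pure and specialise to the structure on~$\cX_0$ inside this parameter space, $\cX_0$ is pure. Running this over the whole moduli and combining with Corollary~\ref{cor:moduli} then identifies the closed locus~$P$ with the moduli space described there.

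The main obstacle is the claim in the second paragraph that the limiting base~$S_0\subset\PP^4$ cannot degenerate out of the class of du Val degree-$4$ del Pezzo surfaces -- to something non-normal, reducible, lower-dimensional, or of the wrong deformation type -- equivalently that~$\cX_0$ cannot leave the class of $(2,2,4)$-complete intersections; a priori the \Kmoduli{} limit could be a rather different \Kpolystable{} Fano \threefold{}. Excluding this is exactly where \Kstability{} of the $3$-fold total space has to be used in an essential way, by transferring it to the base of the conic bundle, and it constitutes the substance of Proposition~\ref{prop:dP4}; once that is in hand, the rest is the soft specialisation-and-closedness bookkeeping above.
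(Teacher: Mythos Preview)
Your approach runs in the opposite direction to the paper's, and this reversal is where the real gap lies. You start from a K-polystable family $\cX\to C$ of \threefolds, take the Hilbert-scheme limit $S_0$ of the underlying del Pezzo surfaces $S_t$, and then assert that $\cX_0$ is the conic bundle over $S_0$. But the Hilbert-scheme limit of the $S_t$ has no a priori relation to the K-moduli limit $\cX_0$: your rational map $\cX\dashrightarrow\cS$ is defined only over $C\setminus\{0\}$, and nothing you have said forces $\cX_0$ to fibre over $S_0$, or over anything at all. The sentence ``Granting this, $\cX_0$ is the relative conic over $S_0$'' is precisely the statement to be proved. Your gloss of Proposition~\ref{prop:dP4} as asserting that ``a pure $(2,2,4)$-complete intersection $X$ carries a conic bundle structure $X\to S$ \dots\ recovered intrinsically from $X$'' is also not what that proposition says.

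The paper instead builds a morphism in the \emph{other} direction. Proposition~\ref{prop:surface-3fold} (which you do not mention) shows that the cone-plus-double-cover construction $(S,\tfrac{1}{16}\Delta)\mapsto X$ preserves K-(semi/poly)stability; combined with properness of the K-moduli space $\fM_{1/16}$ of surface pairs, this yields a finite morphism $\psi\colon\fM_{1/16}\to\Kps{3}{4}$ whose image is closed and equals the closure of the pure locus. Proposition~\ref{prop:dP4} then works entirely at the surface level: the local--global volume comparison $\hvol(x,S,c\Delta)\geq\frac{4}{9}(-K_S-c\Delta)^2$ bounds the order of the local orbifold group at any singular point of $S$ by $4$, forcing $S$ to have at worst $A_3$-singularities and hence to remain a $(2,2)$-complete intersection in $\PP^4$. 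Corollary~\ref{cor:moduli} is then immediate: every $\psi(S,\Delta)$ is of the form $\{y_0y_1=f,\ g=h=0\}\subset\PP(1^5,2^2)$, i.e.\ a pure $(2,2,4)$-complete intersection. Thus the closure of the pure locus is already contained in the pure locus, and the theorem follows. The key structural ingredient you are missing is Proposition~\ref{prop:surface-3fold}; your ``adjunction along the fibres'' is not a substitute for it, and the volume-comparison argument that actually drives Proposition~\ref{prop:dP4} never appears in your outline.
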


This is proved in \S\ref{limits} by showing that the K-moduli locus of  pure  $(2,2,4)$-complete intersections in $\PP(1^5,2^2)$ is determined by the K-moduli space of pairs $(S,\frac{1}{16}\Delta)$, where $S$ is a degree $4$ del Pezzo surface and $\Delta\sim -4K_S$.
In \S\ref{wall} we provide a complete study of the wall-crossing for the pairs $(S,c\Delta)$, as $c$ is a rational number in $(0, \frac{1}{16}]$.

\subsection{Degenerations of quartic \threefolds{} inside~$\PP(1^5,2^2)$}
Denote the variables of the weighted projective space~$\PP(1^5,2^2)$ by~$x_0,\ldots,x_4,y_0,y_1$, where~$y_0$ and~$y_1$ are the two variables of weight~$2$. Let $X\subset \PP(1^5,2^2)$ be a complete intersection of two quadrics and a quartic. If~$X$ is not a cone then, after a suitable change of coordinates, it is defined by the equations
\begin{align*}
&y_0y_1=f(x_0,x_1,x_2,x_3,x_4)\\
&g(x_0,x_1,x_2,x_3,x_4)=ay_0+by_1\\
&h(x_0,x_1,x_2,x_3,x_4)=cy_0+dy_1
\end{align*}
where~$f$ has degree~$4$,~$g$ and~$h$ have degree~$2$, and~$a,b,c,d \in \CC$. There are three possibilities depending on the rank of the matrix
\[
A=
\begin{pmatrix}
a&b\\
c&d
\end{pmatrix}.
\]
If~$\rk A=2$ then~$X$ is a quartic hypersurface in~$\PP^4$. If~$\rk A=1$ then~$X$ is hyperelliptic; i.e.\ a $(2,4)$\nobreakdash-complete intersection in~$\PP(1^5,2)$. If~$A$ is the null matrix then~$X$ has two singular points at $p_0=[0:0:0:0:0:1:0]$ and~$p_1= [0:0:0:0:0:0:1]$.  Let us now assume that we are in the latter case, so that~$X \subset \PP(1^5,2^2)$ is defined by
\begin{align*}
&y_0 y_1 =f(x_0,x_1,x_2,x_3,x_4)\\
&g(x_0,x_1,x_2,x_3,x_4)=0\\
&h(x_0,x_1,x_2,x_3,x_4)=0
\end{align*}
where~$f$ is a quartic and~$g$ and~$h$ are quadrics. In~\S\ref{sec:toric-examples} we consider special binomials for~$f$,~$g$,~$h$ and we study certain toric varieties~$X$.
In~\S\ref{sec:infinite-family} we study the infinite family of varieties~$X$ obtained by picking general~$f$,~$g$,~$h$. In~\S\ref{limits} we first show that if the surface $\{g\!=\!h\!=\!0\}\subset\PP^4$ and the curve $\{f\!=\!g\!=\!h\!=\!0\}\subset\PP^4$ are smooth then $X$ is K-polystable. We then prove that K-polystable degenerations of these complete intersection $X_{2,2,4}\!\subset\!\PP(1^5,2^2)$ are also complete intersections of these type.

\subsection{Overview of the proof}
\label{methods}
There are several methods available to verify K\nobreakdash-(poly)stability of a given Fano variety. For the toric examples, we use the most natural tool: a toric variety is \Kpolystable{} if and only if the barycentre of its anticanonical polytope is the origin. This translates the algebro-geometric condition into a combinatorial one amenable to computer-assisted investigation. The toric examples which satisfy Theorem~\ref{thm:main} were found via a computer search amongst Fano polytopes using techniques from Mirror Symmetry~\cite{procams,quantum_3folds,fanosearch}. This is explained in~\S\ref{sec:fanosearch}; the examples are studied in~\S\ref{sec:toric-examples}.

For the infinite family of examples satisfying Theorem~\ref{thm:main}, we first use estimates on stability thresholds. Two of the most useful tools in \Kstability{} are the Fujita--Li valuative criterion~\cite{Li-beta,Fujita-beta} and the stability threshold, also called the~$\delta$-invariant, introduced in~\cite{fujita-odaka}. It follows that a Fano variety~$X$ is \Kstable{} if and only if~$\delta(X)>1$. Although~$\delta(X)$ is extremely difficult to compute in general, a method to find lower bounds for~$\delta(X)$ is described in~\cite{AbbanZhuang}. Roughly speaking, one chooses a flag over~$X$ and computes the refinement of the anticanonical linear system with respect to this flag~\cite[\S2]{AbbanZhuang}; the refinement provides a lower bound for~$\delta(X)$. With a little care, one can choose the flag so that the stability threshold of the refinement is greater than~$1$, implying \Kstability. We do this for the infinite family in~\S\ref{sec:infinite-family}. In~\S\ref{limits}, we use techniques of cyclic covers, cone construction, degeneration and interpolation to obtain two results: we first prove that all complete intersections $X_{2,2,4}$ as described above are K-polystable as long as $\{g\!=\!h\!=\!0\}\subset\PP^4$ and $\{f\!=\!g\!=\!h\!=\!0\}\subset\PP^4$ are smooth; then
we prove that the closure in the K-moduli space of the locus of complete intersections $X_{2,2,4} \subset \PP(1^5,2^2)$ which are not complete intersections $X_{2,4} \subset \PP(1^5,2)$ contains only points which correspond to complete intersections $X_{2,2,4} \subset \PP(1^5,2^2)$.

\subsection{Connection to Mirror Symmetry}
\label{sec:fanosearch}
The three toric \threefolds{} presented in Proposition~\ref{prop:specific} and in Remark~\ref{rmk:two_more_examples} were found using a computer-assisted search guided by expectations arising from Mirror Symmetry for Fano varieties~\cite{procams, quantum_3folds, fanosearch}. It is expected that deformation families of smooth (or maybe mildly singular, i.e.\ with orbifold terminal singularities) Fano varieties of dimension~$n$ are in one-to-one correspondence with mutation-equivalence classes of certain `special' Laurent polynomials in~$n$ variables. We need to specify:~\eqref{item:mutation} the meaning of the word~\emph{mutation};~\eqref{item:special} the meaning of the adjective `special'; and~\eqref{item:correspondence} how the correspondence works.

\begin{enumerate}[\hspace{-5pt}(i)]
\item\label{item:mutation}
By~\emph{mutation} we mean a generalisation, introduced in~\cite{sigma}, of the notion of mutation in cluster algebra theory~\cite{cluster}. We do not give the definition here: see~\cite[Definition~2]{sigma} for details. It is sufficient to know that, under certain circumstances, a Laurent polynomial~$f \in \QQ[x_1^{\pm1}, \dots, x_n^{\pm1}]$ can be mutated to another Laurent polynomial~$g \in \QQ[x_1^{\pm1}, \dots, x_n^{\pm1}]$. Two Laurent polynomials~$f$ and~$g$ are said to be mutation-equivalent if there exists a finite sequence of mutations transforming $f$ to $g$.
\item\label{item:special}
In place of `special' we should have written~\emph{rigid maximally mutable}. The class of rigid maximally mutable Laurent polynomials (or~\emph{rigid~MMLPs} for short) was introduced in~\cite{procams} for dimension~$2$, and in~\cite{maximally_mutable} for any dimension. We remark that if~$f \in \QQ[x_1^{\pm1}, \dots, x_n^{\pm1}]$ is a rigid~MMLP in~$n$ variables, then its Newton polytope~$P\coloneqq\Newt{f} \subset \RR^n$ is an~$n$\nobreakdash-dimensional lattice polytope such that the origin lies in the interior of~$P$, and each vertex of~$P$ is a primitive lattice vector:~$P$ is an example of a~\emph{Fano polytope}. One can consider the \emph{spanning fan} (or \emph{face fan}) whose cones are generated by the faces of~$P$, and the (possibly singular) Fano toric variety~$X_P$ associated to this spanning fan.
\item\label{item:correspondence}
The correspondence between deformation families of Fano varieties and mutation-equivalence classes of rigid~MMLPs is described in~\cite{procams,maximally_mutable,CHK22}. Briefly, we expect it to work as follows: given a rigid~MMLP $f$ with Newton polytope~$P$, we consider the Fano toric variety~$X_P$ and associate a (partial) smoothing of~$X_P$. The reason why this assignment should be well-defined is the following result due to Ilten~\cite{ilten_sigma}: if~$f$ and~$g$ are Laurent polynomials related via a mutation, then the Fano toric varieties~$X_P$ and~$X_Q$ are deformation equivalent (here~$Q$ is the Newton polytope of~$g$); i.e.\ there exists a flat proper family~$\cX \to \PP^1$ such that the fibre over~$0 \in \PP^1$ is~$X_P$ and the fibre over~$\infty \in \PP^1$ is~$X_Q$. Notice that a singular Fano toric variety can have many different smoothings  (e.g.~\cite[Theorem~3.1]{kaloghiros_petracci} or~\cite{petracci_roma}); one needs to select the smoothing of~$X_P$ which is compatible with the mutations of~$f$.
\end{enumerate}

Now we explain how we found the toric examples. We start from the polytope~$P'$ whose vertices are the vectors in~\eqref{eq:vectors_symmetric_quartic} in Remark~\ref{rmk:balanced_quartic}. The toric variety~$X_{P'}$ associated to the spanning fan of~$P'$ is the singular quartic hypersurface~$\{ x_1 x_2 x_3 x_4 = x_0^4 \} \subset \PP^4$, which clearly deforms to members of~$\Vfour$. There exists a unique rigid~MMLP $f$ such that~$P' = \Newt{f}$, namely
\[
f = \frac{(1+x+y+z)^4}{xyz} -24.
\]
We used the computer algebra system~\textsc{Magma}~\cite{BCP97} to construct many Laurent polynomials~$g$ which are mutation-equivalent to~$f$. Each of these~$g$'s gives a Fano toric variety~$X_Q$, where $Q\coloneqq\Newt{g}$, which, according to the Mirror Symmetry expectations described above, should be a degeneration of quartic \threefolds. We then filter for those~$g$ such that the polar of~$Q$ has barycentre at the origin; this is equivalent to requiring that the Fano toric variety~$X_Q$ is \Kpolystable. In this way we found three Laurent polynomials, one of which is given by:
\begin{align*}
	g &= x^3 y^2 z^4 + 2 x^2 y^3 z^2 + 4 x^2 y^2 z^2 + 8 x^2 y z^3 + 2 x^2 y z^2
	+ x y^4 + 4 x y^3 + 8 x y^2 z + 6 x y^2 \\
	&+ 16 x y z + 4 x y + 28 x z^2 + 8 x z
+ x + 12 y + 56 z/y + 12/y + 8/(x z) + 16/(x y z) + 70/(x y^2) + 8/(x y^2 z) \\
	&+ 2/(x^2 y z^2) + 4/(x^2 y^2 z^2) + 56/(x^2 y^3 z) + 2/(x^2 y^3 z^2)
+	28/(x^3 y^4 z^2) + 8/(x^4 y^5 z^3) + 1/(x^5 y^6 z^4).
\end{align*}
The Newton polytope~$P\coloneqq\Newt{g}$ is the Fano polytope given in Proposition~\ref{prop:specific}. The Newton polytopes of the remaining two Laurent polynomials found using this method are described in Remark~\ref{rmk:two_more_examples}.

\subsection*{Notation and conventions}
\label{sec:notation}
We work over an algebraically closed field of characteristic zero, denoted by~$\CC$. Every toric variety or toric singularity is assumed to be normal.

\section{The toric examples}
\label{sec:toric-examples}
We begin by analysing a toric Fano  \threefold{} satisfying Theorem~\ref{thm:main}.

\begin{prop} \label{prop:specific}
Let~$P$ be the polytope with vertices
\begin{equation} \label{eq:vertices_of_P}
\begin{pmatrix}
3 \\ 2 \\ 4
\end{pmatrix},\
\begin{pmatrix}
1 \\ 4 \\ 0
\end{pmatrix},\
\begin{pmatrix}
1 \\ 0 \\ 0
\end{pmatrix},\
\begin{pmatrix}
-5 \\ -6 \\ -4
\end{pmatrix}
\end{equation}
in the lattice~$ N = \ZZ^3$ and let~$X$ be the toric variety associated to the spanning fan of~$P$.
Then:
\begin{enumerate}
\item\label{specific_1}
$X$ is a~$\QQ$-factorial \Kpolystable{} Fano \threefold;
\item\label{specific_2}
$\Pic(X) \simeq \ZZ$ and~$\Cl(X) \simeq \ZZ \oplus \ZZ / 2 \ZZ \oplus \ZZ / 8 \ZZ$;
\item\label{specific_3}
$X$ is the quotient~$\PP^3 / (\bmu_2 \times \bmu_8)$, where~$\bmu_2$ acts linearly with weights~$(0,1,0,1)$ and~$\bmu_8$ acts linearly with weights~$(0,5,1,6)$;
\item\label{specific_4}
the singular locus of~$X$ consists of six rational curves generically along which~$X$ has transverse~$A_1$,~$A_3$ or~$A_7$ singularities;
\item\label{specific_5}
there are exactly two non-Gorenstein points on~$X$ and both of them have Gorenstein index~$2$ and are not canonical;
\item\label{specific_6}
$X$ is not a $(2,4)$\nobreakdash-complete intersection in~$\PP(1^5, 2)$;
\item\label{specific_7}
$X$ is the $(2,2,4)$\nobreakdash-complete intersection in~$\PP(1^5, 2^2)$ given by the equations
\begin{align*}
x_0^4 - y_0 y_1 &= 0\\
x_2^2 - x_0 x_3 &= 0\\
x_3^2 - x_1 x_4 &= 0
\end{align*}
where~$x_0, \ldots, x_4, y_0, y_1$ are the homogeneous coordinates of~$\PP(1^5, 2^2)$ with degrees $1,\ldots,1,2,2$ respectively;
\item\label{specific_8}
$X$ deforms to a quartic \threefold.
\end{enumerate}
\end{prop}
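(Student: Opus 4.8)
The plan is to establish each of the eight items by a combination of explicit toric combinatorics and standard facts about quotient singularities, postponing the deformation statement~\eqref{specific_8} to the very end since it will rely on the complete intersection description in~\eqref{specific_7}. First I would compute the dual (polar) polytope $P^\vee$ of $P$: its vertices are the primitive inner normals of the facets of $P$, and from the four facets one reads off the toric boundary divisors and their relations. Since $P$ has exactly four vertices, $X$ has Picard rank $1$ and is $\QQ$-factorial, giving the first half of~\eqref{specific_1} and~\eqref{specific_2}; the class group is computed from the cokernel of the map $\ZZ^4 \to \ZZ^3$ given by the vertex matrix, whose Smith normal form should yield $\ZZ/2 \oplus \ZZ/8$ in the torsion part. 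For \Kpolystability{} I would invoke the criterion quoted in~\S\ref{methods}: it suffices to check that the barycentre of $P^\vee$ (equivalently, the anticanonical polytope) is the origin, which is the combinatorial computation that guided the computer search in~\S\ref{sec:fanosearch}.

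Next, for~\eqref{specific_3} I would exhibit the index-one cover: the sublattice $N' \subset N$ generated by the four vertices of $P$ has index $16$, and the vertices of $P$ become the standard primitive generators of the fan of $\PP^3$ in $N'$; thus $X = \PP^3/G$ with $G = N/N' \simeq \bmu_2 \times \bmu_8$, and the weights of the action are read off from expressing a generator of each cyclic factor in the dual basis, matching $(0,1,0,1)$ and $(0,5,1,6)$ after a suitable choice of coordinates on $\PP^3$. From this quotient presentation, items~\eqref{specific_4} and~\eqref{specific_5} are a finite check: the singular locus of $X$ is the image of the locus in $\PP^3$ with nontrivial stabiliser, i.e.\ the coordinate points and coordinate lines; computing the stabiliser subgroup along each coordinate line gives cyclic groups of orders $2,4,8$ acting as transverse $A_{k-1}$ singularities, producing the six curves with $A_1$, $A_3$, $A_7$ transverse types. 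For~\eqref{specific_5}, the non-Gorenstein points are precisely the torus-fixed points of $X$ corresponding to cones $\sigma$ whose primitive generators do not lie on an affine hyperplane at lattice distance one; a direct inspection of the four maximal cones of the spanning fan of $P$ identifies exactly two such points, and the Gorenstein index and failure of canonicity are checked by the usual criterion (existence of a lattice point of $\sigma$ strictly below the hyperplane through the generators of $\sigma^\vee$ determined by $-K_X$).

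For~\eqref{specific_6} and~\eqref{specific_7} I would first verify the complete intersection equations directly: the Cox ring computation (or the quotient description) shows that $X$ embeds in $\PP(1^5,2^2)$ via sections corresponding to the lattice points of $P^\vee$ at the appropriate levels, and one checks that the three binomials $x_0^4 - y_0y_1$, $x_2^2 - x_0 x_3$, $x_3^2 - x_1 x_4$ cut out a variety with the correct Hilbert series and dimension, hence equal to $X$; this also makes the $\bmu_2 \times \bmu_8$ action visible on the coordinates. Then~\eqref{specific_6} follows from the structural analysis in~\S\ref{sec:intro_quartic_3folds}: a $(2,4)$-complete intersection in $\PP(1^5,2)$ is either a quartic hypersurface (terminal Gorenstein) or a hyperelliptic double cover (Gorenstein), whereas $X$ has non-Gorenstein points by~\eqref{specific_5}, so it cannot be of either type. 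Finally~\eqref{specific_8}: the generic $(2,2,4)$-complete intersection in $\PP(1^5,2^2)$ with matrix $A$ of rank $2$ is a smooth quartic \threefold{} by the rank analysis in the introduction, so deforming the three defining binomials of $X$ within the linear system of two quadrics and a quartic --- e.g.\ replacing $y_0y_1$ by $y_0y_1 + t\,(\text{linear terms in }y_i)$ to raise the rank of $A$ --- gives a flat family over a disc with central fibre $X$ and general fibre a quartic hypersurface; flatness is automatic since all fibres are complete intersections of the same multidegree in the same ambient space. The main obstacle I anticipate is~\eqref{specific_5}: correctly pinning down which torus-fixed points are non-Gorenstein, their exact Gorenstein indices, and the failure of canonicity requires care with the lattice geometry of each maximal cone, and it is the one place where an off-by-one error in the dual polytope computation would propagate; everything else is either a Smith-normal-form calculation, a barycentre check, or a finite stabiliser computation.
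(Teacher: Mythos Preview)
Your overall strategy matches the paper's closely: items~\eqref{specific_1}--\eqref{specific_5}, \eqref{specific_7}, and~\eqref{specific_8} are handled by essentially the same toric computations (barycentre of the polar polytope for K-polystability, cokernel of the ray map for the class group, Hilbert basis of the cone over $P^\circ$ placed at height~$1$ for the embedding in $\PP(1^5,2^2)$, and so on). Your approach to~\eqref{specific_4} via stabilisers in the quotient $\PP^3/(\bmu_2\times\bmu_8)$ is a legitimate alternative to the paper's direct analysis of the two-dimensional cones $\sigma_{ij}$.

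There is, however, a genuine gap in your argument for~\eqref{specific_6}. You invoke the dichotomy from~\S\ref{sec:intro_quartic_3folds} to assert that any $(2,4)$-complete intersection in $\PP(1^5,2)$ is either a quartic in $\PP^4$ or a hyperelliptic double cover, and hence Gorenstein. But that dichotomy is derived under the hypothesis that $X$ is \emph{smooth}, which forces $a\neq 0$ or $b\neq 0$ in the notation of~\S\ref{sec:intro_quartic_3folds}; a singular $(2,4)$-complete intersection with $a=b=0$ is neither a quartic in $\PP^4$ nor a double cover, and in fact such a complete intersection can be non-Gorenstein at the point $p=[0:\cdots:0:1]$ (the ambient singularity $\frac{1}{2}(1,1,1,1,1)$ there is not Gorenstein). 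So the claim ``every $(2,4)$-complete intersection is Gorenstein'' is false as stated.

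The paper's argument sidesteps the dichotomy entirely and is the fix you need: write $\PP=\PP(1^5,2)$ and note that $\PP\setminus\{p\}$ is smooth, so for \emph{any} $(2,4)$-complete intersection $X\subset\PP$ the inclusion $X\setminus\{p\}\hookrightarrow\PP\setminus\{p\}$ is a regular closed embedding, hence $X\setminus\{p\}$ is lci and in particular Gorenstein. Therefore the non-Gorenstein locus of $X$ is contained in $\{p\}$ and so consists of \emph{at most one} point. This already contradicts~\eqref{specific_5}, which produces two non-Gorenstein points on $X$.
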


\begin{proof}[Proof of~\eqref{specific_1}]
Let~$\Sigma$ be the spanning fan of~$P$. It is clear that~$X$ is a Fano \threefold. Since~$P$ is a tetrahedron, each cone of~$\Sigma$ is simplicial, therefore~$X$ is $\QQ$\nobreakdash-factorial. Let~$M$ be the lattice dual to~$N$ and let~$\langle \cdot , \cdot \rangle \colon M \times N \to \ZZ$ be the dual pairing. We denote by the same symbol its extension to the associated real vector spaces, i.e.\ $M_\RR \times N_\RR \to \RR$.
Consider the polar~$P^\circ$ of~$P$:
\[
P^\circ \coloneqq \{ u \in M_\RR \mid \langle u, v \rangle \geq -1\text{ for all }v\in P \}.
\]
This is the polytope associated to the toric boundary of~$X$, which is an anticanonical divisor. One can show that~$P^\circ$ is the rational polytope in~$M_\RR$ with vertices $(-1, 0, \frac{3}{2})$, $(-1, 1, 0)$, $( 3, -1, -2)$, $(-1, 0, \frac{1}{2})$. Since the barycentre of~$P^\circ$ is the origin,~$X$ is \Kpolystable{} by~\cite{berman_polystability}.
\end{proof}

\begin{proof}[Proof of~\eqref{specific_2} and of~\eqref{specific_3}]
Consider the linear map~$\rho \colon \ZZ^4 \to N = \ZZ^3$ which maps the~$i$th standard basis vector to the~$i$th vertex of~$P$ in~\eqref{eq:vertices_of_P}. Consider the transpose~$^t \! \rho \colon M = \ZZ^3 \to \ZZ^4$. This is injective and its cokernel is isomorphic to~$\ZZ \oplus \ZZ / 2\ZZ \oplus \ZZ / 8\ZZ$ via the homomorphism~$\ZZ^4 \to \ZZ \oplus \ZZ / 2\ZZ \oplus \ZZ / 8\ZZ$ given by the matrix
	\begin{equation*}
		\begin{pmatrix}
			1 & 1 & 1 & 1 \\
			\overline{0}^2 & \overline{1}^2 & \overline{0}^2 & \overline{1}^2 \\
			\overline{0}^8 & \overline{5}^8 & \overline{1}^8 & \overline{6}^8
		\end{pmatrix},
	\end{equation*}
where~$\overline{\cdot}^2$ (respectively, $\overline{\cdot}^8$) denotes the reduction modulo~$2$ (respectively, $8$).
	The short exact sequence
	\[
	0 \longrightarrow M \overset{^t \! \rho}{\longrightarrow} \ZZ^4 \longrightarrow \ZZ \oplus \ZZ / 2 \ZZ \oplus \ZZ / 8\ZZ \longrightarrow 0
	\]
	is the divisor sequence~\cite[Theorem~4.1.3]{cls}, hence the divisor class group of~$X$ is isomorphic to~$\ZZ \oplus \ZZ / 2 \ZZ \oplus \ZZ / 8\ZZ$.
	By~\cite[Proposition~4.2.5]{cls} the Picard group of~$X$ is free; moreover, since~$X$ is $\QQ$\nobreakdash-factorial, the Picard rank of~$X$ coincides with the rank of~$\Cl(X)$, which is~$1$.
	The presentation of~$X$ as a finite abelian quotient of~$\PP^3$ follows from~\cite[Exercise~5.1.13]{cls}.
\end{proof}

\begin{proof}[Proof of~\eqref{specific_4}]
	Let~$\rho_1, \rho_2, \rho_3, \rho_4$ be the elements in~$N$ which appear in~\eqref{eq:vertices_of_P}. For~$1 \leq i < j \leq 4$, let~$\sigma_{ij} \in \Sigma$ the $2$\nobreakdash-dimensional cone with rays~$\RR_{\geq 0} \rho_i$ and~$\RR_{\geq 0} \rho_j$.
	Let~$C_{ij}$ be the closure of the torus orbit on~$X$ associated to the cone~$\sigma_{ij}$: it is a smooth rational curve.
	The shape of the cone~$\sigma_{ij}$ determines the singularities of~$X$ generically along~$C_{ij}$. We write down the analysis for~$C_{12}$ and we omit the other cases.
	
	We consider the~$2$\nobreakdash-dimensional lattice~$N_{12} = N \cap (\RR \rho_1 + \RR \rho_2)$.
	This is the orthogonal of~$(8,-2,-5) \in M$.
	The vectors~$\rho_1$ and~$\rho_2$ form an~$\RR$-basis of~$\RR \rho_1 + \RR \rho_2 = N_{12} \otimes_\ZZ \RR$, but not a~$\ZZ$-basis of~$N_{12}$.
	The finite abelian group~$N_{12} / (\ZZ \rho_1 + \ZZ \rho_2)$ has order~$2$ and its generator is the class of
	\[
	\frac{1}{2} \rho_1 + \frac{1}{2} \rho_2 = \begin{pmatrix}
		2 \\ 2 \\ 2
	\end{pmatrix} \in N_{12}.
	\]
	This implies that the toric variety associated to the cone~$\sigma_{12}$ in the lattice~$N_{12}$ is the~$A_1$ surface singularity~$\Spec \CC[x,y,z] / (xy-z^2)$.
	This implies that~$X$ has transverse~$A_1$ singularities generically along~$C_{12}$.	
	\end{proof}

\begin{proof}[Proof of~\eqref{specific_5}]
	By~\eqref{specific_4} the non-Gorenstein locus of~$X$ is contained in the set of the~$4$ torus-fixed points: $p_{123},p_{124},p_{134}$, and $p_{234}$.
	Here~$p_{ijk}$ is the torus-fixed point on~$X$ corresponding to the $3$\nobreakdash-dimensional cone~$\sigma_{ijk}$ with rays~$\RR_{\geq 0} \rho_i$,~$\RR_{\geq 0} \rho_j$, and~$\RR_{\geq 0} \rho_k$.
	We need to analyse the singularities of~$X$ at these points.
	
	Let us start from~$p_{123}$.
	The three vectors~$\rho_1$,~$\rho_2$, and~$\rho_3$ lie on the affine plane
	\[
	H_{(2, 0, -1), 2} \coloneqq \{ v \in N_\RR \mid \langle (2, 0, -1), v \rangle = 2 \}.
	\]
Since the lattice vector~$(2,0,-1) \in M$ is primitive, we get that the Gorenstein index of~$X$ at~$p_{123}$ is~$2$. Moreover, the singularity~$p_{123} \in X$ is not canonical because the lattice vector
	\[
	\begin{pmatrix}
		1 \\ 1 \\ 1
	\end{pmatrix} = \frac{1}{4} \rho_1 + \frac{1}{8} \rho_2 + \frac{1}{8} \rho_3
	\]
	lies in the interior of the polytope with vertices~$0, \rho_1, \rho_2, \rho_3$ (see~\cite[Proposition~11.4.12b]{cls}).
	
	Now consider~$p_{124}$.
	The three vectors~$\rho_1$,~$\rho_2$, and~$\rho_4$ lie on the affine plane
	\[
	H_{(-3, 1, 2), 1} \coloneqq \{ v \in N_\RR \mid \langle (-3, 1, 2), v \rangle = 1 \}.
	\]
	Since the lattice vector~$(-3,1,2) \in M$ is primitive, we get that the Gorenstein index of~$X$ at~$p_{123}$ is~$1$. By~\cite[Proposition~11.4.11]{cls} the singularity~$p_{124} \in X$ is canonical.
	
	In an analogous way we can prove that~$p_{134}$ is a Gorenstein canonical singularity, whereas~$p_{234}$ is a non-canonical singularity with Gorenstein index~$2$.
	\end{proof}

\begin{proof}[Proof of~\eqref{specific_6}]
	For brevity, set~$\PP\coloneqq\PP(1^5,2)$. The singular locus of~$\PP$ consists of a single point $p = [0 : 0 : 0 : 0 : 0 : 1]$.
For a contradiction assume that~$X$ is a $(2,4)$\nobreakdash-complete intersection in~$\PP$ and consider the corresponding closed embedding~$X \into \PP$.
	We have that~$X \setminus \{ p \} \into \PP \setminus \{ p \}$ is a regular closed embedding, because it is locally defined by the vanishing of the dehomogeneisations of the quartic and the quadric that define~$X$ inside~$\PP$.
	This implies that~$X \setminus \{ p \}$ is lci, and in particular Gorenstein.
	Therefore the non-Gorenstein locus of~$X$ is contained in~$\{ p \}$, so it is either empty or consists of one point.
	This contradicts~\eqref{specific_5}.
\end{proof}

\begin{proof}[Proof of~\eqref{specific_7}]
	In the lattice~$M \oplus \ZZ$ we consider the cone~$\tau$ whose apex is at the origin and which is spanned by~$P^\circ \times \{ 1 \}$.
	In other words,~$\tau$ is the cone over~$P^\circ$ placed at height~$1$.
	The primitive generators of~$\tau$ are
	\begin{align*}
		y_1 &= (-2, 0, 3,2), \\
		x_1 &= (-1, 1, 0,1), \\
		x_4 &= (3, -1, -2,1), \\
		y_0 &= (-2, 0, 1,2).
	\end{align*}
	The Hilbert basis (i.e.\ the minimal set of generators) of the monoid~$\tau \cap (M \oplus \ZZ)$ is made up of~$y_1$, $x_1$, $x_4$, $y_0$, $x_0 = (-1,0,1,1)$, $x_2 = (0,0,0,1)$, $x_3 = (1, 0, -1, 1)$.
	
	Since~$P^\circ$ is the moment polytope of the toric boundary of~$X$, which is anticanonical and hence ample, we have that~$X = \Proj \CC[\tau \cap (M \oplus \ZZ)]$, where the~$\NN$-grading is given by the projection~$M \oplus \ZZ \onto \ZZ$.
	This shows that~$X$ is a closed subvariety of the weighted projective space~$\PP(1^5,2^2)$, equipped with homogeneous coordinates~$x_0,\ldots,x_4,y_0,y_1$.
	It is easy to see that the equations of~$X$ in~$\PP(1^5,2^2)$ are
	\begin{align*}
		x_0^4 - y_0 y_1 &= 0,
		\\ x_2^2 - x_0 x_3 &= 0,
		\\ x_3^2 - x_1 x_4 &= 0.
	\end{align*}
Thus~$X$ is a $(2,2,4)$\nobreakdash-complete intersection in~$\PP(1^5,2^2)$.
\end{proof}

\begin{proof}[Proof of~\eqref{specific_8}]
	By using an argument similar to the one that appears in the introduction, one can see that the general $(2,2,4)$\nobreakdash-complete intersection in~$\PP(1^5,2^2)$ is a quartic \threefold.
\end{proof}

\begin{remark} \label{rmk:two_more_examples}
In addition to the example presented in Proposition~\ref{prop:specific}, we found two additional Fano toric \threefolds{} which satisfy Theorem~\ref{thm:main}.
\begin{enumerate}
	\item Let $X$ be the toric \threefold{} associated to the spanning fan of the polytope in $\ZZ^3$ with vertices
	$( 1, 3, 2)$,
	$( 1, 3, 0)$,
	$( 1, 0, 2)$,
	$( 1, 0, 0)$,
	$(-1, -1, 2)$,
	$(-1, -1, -4)$,
	$(-1, -2, 2)$,
	$(-1, -2, -4)$.
	There are isomorphisms $\Cl(X) \simeq \ZZ^5 \oplus \ZZ/2\ZZ$ and $\Pic(X) \simeq \ZZ$.
	One can prove that 	$X$ is the closed subvariety of $\PP(1^5,2^2)$ defined by the equations
	\begin{align*}
		x_0^3 x_3 - y_0 y_1	 &=0, \\
		x_2^2-x_0 x_3	 &=0, \\
		x_2 x_3-x_1 x_4	 &= 0.
	\end{align*}
	
	\item Let $X$ be the toric \threefold{} associated to the spanning fan of the polytope in $\ZZ^3$ with vertices
	$( 3, 4, 4)$,
	$( 3, 2, 4)$,
	$( 1, 2, 0)$,
	$( 1, 0, 0)$,
	$(-1, 0, 0)$,
	$(-1, -2, 0)$,
	$(-3, -2, -4)$,
	$(-3, -4, -4)$.
	There are isomorphisms $\Cl(X) \simeq \ZZ^5 \oplus \ZZ/2\ZZ \oplus \ZZ / 4 \ZZ$ and $\Pic(X) \simeq \ZZ$.
		One can prove that 	$X$ is the closed subvariety of $\PP(1^5,2^2)$ defined by the equations
		\begin{align*}
		x_0^2 x_4^2 - y_0 y_1 &=0, \\
		x_2^2 - x_0 x_4 &=0, \\
		x_1 x_3 - x_0 x_4 &= 0.
	\end{align*}
\end{enumerate}
Each of these examples is a $(2,2,4)$\nobreakdash-complete intersection in $\PP(1^5,2^2)$ and has two singular points with Gorenstein index $2$.
\end{remark}

\begin{remark} \label{rmk:balanced_quartic}
In addition to the three examples presented in Proposition~\ref{prop:specific} and in Remark~\ref{rmk:two_more_examples} we know a further \Kpolystable{} Fano toric \threefold{} which deforms to quartic \threefolds: this is the toric variety associated to the spanning fan of the polytope with vertices
\begin{equation} \label{eq:vectors_symmetric_quartic}
\begin{pmatrix}
	3 \\ -1 \\ -1
\end{pmatrix}\!, \
\begin{pmatrix}
	-1 \\ 3 \\ -1
\end{pmatrix}\!, \
\begin{pmatrix}
	-1 \\ -1 \\ 3
\end{pmatrix}\!, \
\begin{pmatrix}
	-1 \\ -1 \\ -1
\end{pmatrix}
\end{equation}
in $N = \ZZ^3$, and it is the $\QQ$-factorial hypersurface $\{ x_1 x_2 x_3 x_4 - x_0^4 =0 \}$ in $\PP^4$.
We conjecture that these four toric varieties are the only \Kpolystable{} Fano toric \threefolds{} that deform to quartic \threefolds.
\end{remark}

\begin{remark} A Fano polytope $P$ is called~\emph{symmetric} if the only point which is fixed by every automorphism of $P$ is the origin.
	The examples in Remark~\ref{rmk:two_more_examples} are Fano polytopes which are not simplices and are such that their polar has barycentre at the origin.
	 \end{remark}

\begin{remark} According to~\cite{hall} it is expected that in every mutation-equivalence class of $2$\nobreakdash-dimensional Fano polytopes there is at most one polytope $P$ such that its polar $P^\circ$ has barycentre at the origin. This is not true in higher dimension; for instance, the four polytopes presented in Proposition~\ref{prop:specific}, Remark~\ref{rmk:two_more_examples}, and Remark~\ref{rmk:balanced_quartic} are mutation-equivalent and the polar of each of them has barycentre at the origin. \end{remark}

\section{The infinite family}\label{sec:infinite-family}

\subsection{Statement}

The infinite family of examples satisfying Theorem~\ref{thm:main} is given by the following:

\begin{theorem}	\label{theorem:IHES}
	Let $x_0, \ldots, x_4, y_0, y_1$ be the homogeneous coordinates of the weighted projective space $\PP(1^5, 2^2)$ with degrees $1,\ldots,1,2,2$ respectively.
	Consider $x_0, \dots, x_4$ also as the homogeneous coordinates of $\PP^4$.
	Let $f$ be a quartic in $x_0, \dots, x_4$ and let $g$ and $h$ be two quadrics in $x_0, \dots, x_4$.
	Consider the zero-loci
	\begin{equation}\label{eq:equations_of_X_diagonal}
	\begin{aligned}
		\Delta &= \{ f = 0, \ g = h = 0 \} \subset \PP^4, \\
		S &= \{ g = h=0 \} \subset \PP^4, \\
		X &= \left\{ y_0y_1 - f = 0, \ g = h = 0 \right\} \subset \PP(1^5,2^2).
	\end{aligned}
	\end{equation}
	Assume that $\Delta$ is a smooth curve and that $S$ is a smooth surface.
	Then the following statements hold.
	\begin{enumerate}
		\item\label{item:IHES_1} The surface $S$ is a smooth del Pezzo of degree $4$ and contains exactly $16$ lines; their intersection points form a finite subset $\Sigma \subset S$ consisting of $40$ points.
		\item\label{item:IHES_2} $X$ is a klt Fano \threefold{} and the group $G = \CC^* \rtimes \ZZ / 2\ZZ$ acts faithfully on $X$.
		\item\label{item:IHES_3} If $\Sigma\cap\Delta=\varnothing$, then $X$ is \Kpolystable.
	\end{enumerate}
\end{theorem}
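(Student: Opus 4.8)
The strategy is to combine the $G$-action with the Abban--Zhuang method, but the cleanest route is to exploit the structure of $X$ as a double cover. Since $X = \{y_0 y_1 = f\} \cap \{g = h = 0\}$ and $S = \{g = h = 0\} \subset \PP^4$ is a smooth del Pezzo surface of degree $4$, the projection $\pi\colon X \to S$ forgetting $y_0, y_1$ realizes $X$ as a cyclic double cover — more precisely, after completing the square in the chart where (say) $y_0 \ne 0$, or better, as the hypersurface $\{y_0 y_1 = f\}$ inside the $\PP^1$-bundle $\PP_S(\cO_S \oplus \cO_S(-2))$ over $S$ — branched along the curve $\Delta \subset S$, which is smooth by hypothesis, with $\Delta \sim -4K_S = -2K_S \otimes (-2K_S)$ wait; concretely $\Delta = S \cap \{f = 0\}$ is cut by a quartic so $\Delta \in |-4K_S|$ is an anticanonical-type divisor on $S$. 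The plan is therefore to use the well-known comparison between K-stability of a cyclic cover and K-stability (log K-stability) of the base pair $(S, \frac{1}{2}\Delta')$ for an appropriate coefficient: a double cover of $S$ branched in $B \sim -mK_S$-type divisor is K-(poly/semi)stable if and only if the log Fano pair $(S, \frac{1}{m'} B)$ is, for the coefficient dictated by the adjunction/ramification formula. One computes $K_X = \pi^*(K_S + \frac{1}{2}\Delta) + (\text{fiber contribution})$ and checks $-K_X$ is ample, giving the Fano property of \eqref{item:IHES_2}; then the main input is a theorem on K-stability of cyclic covers (in the style of Dervan, Zhuang, or the general results now available) reducing the problem to the pair $(S, \tfrac{1}{2}\Delta)$ where $\Delta$ plays the role of the branch divisor with its natural coefficient.

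Once the problem is reduced to the surface pair, the next step is to prove K-polystability of the log del Pezzo pair $(S, c\Delta)$ for the relevant coefficient $c$ (which the excerpt signals is $\tfrac{1}{16}$, consistent with $\Delta \sim -4K_S$ and a double cover normalization). Here I would again combine the Abban--Zhuang estimate with the large symmetry group: $G = \CC^* \rtimes \ZZ/2\ZZ$ acts on $X$ and descends to an action on $(S, \Delta)$, so to get K-polystability it suffices to test $G$-equivariant valuations and to bound $\delta_G(S, c\Delta) \geq 1$ (with equality only along the torus direction, handled by a Futaki/barycenter computation forcing polystability rather than stability). The key geometric fact making this tractable is part \eqref{item:IHES_1}: $S$ has exactly $16$ lines meeting in the $40$ points of $\Sigma$, and the hypothesis $\Sigma \cap \Delta = \varnothing$ ensures $\Delta$ avoids the "most singular" flags on $S$. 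I would therefore choose flags $p \in \ell \subset S$ with $\ell$ a line on $S$ (or a conic in $|-K_S|$), compute the refined linear system of $-K_S$ (equivalently $-K_S - c\Delta$ after twisting), and use $\Delta \cap \ell$ being a smooth degree-$4$ divisor disjoint from $\Sigma$ to keep all the relevant $S_X$ and $A_X$ contributions in balance — the estimates are uniform precisely because $\Delta$ meets each line transversally in $4$ general points.

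The main obstacle is the verification that the Abban--Zhuang lower bound is $\geq 1$ (and $> 1$ off the destabilizing $\CC^*$-direction) uniformly over the whole family: the refinement computation on the del Pezzo surface of degree $4$ involves the $(-1)$-curve configuration and the position of the quartic section $\Delta$, and one must rule out that some line $\ell$ or some point of $\Sigma$ (even though disjoint from $\Delta$) gives a flag with $\beta \le 0$ after the cover. Concretely, the danger is at a point $p \in S \setminus \Delta$ lying on several of the $16$ lines — such $p$ is forced into $\Sigma$, hence disjoint from $\Delta$, but the local intersection theory of the lines through $p$ still constrains the refinement; I expect the bulk of the work is a careful case analysis of $p$ according to how many lines of $S$ pass through it (at most two, generically one), combined with the cyclic-cover multiplier that improves every estimate by a factor related to $2$ coming from the double cover. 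Finally, K-polystability (as opposed to mere semistability) follows by identifying the unique candidate destabilizing test configuration as the one induced by the $\CC^*\subset G$ acting on $y_0, y_1$, and checking its Futaki invariant vanishes — which it must, since $G$ is reductive and acts on $X$, so any product test configuration from a one-parameter subgroup of $\Aut(X)$ has zero Futaki invariant. This pins down polystability and completes \eqref{item:IHES_3}.
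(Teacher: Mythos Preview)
Your proposal rests on a geometric misidentification that breaks the argument. The projection $X \dasharrow S$ obtained by forgetting $y_0, y_1$ is \emph{not} a double cover branched along $\Delta$: after resolving the two base points $p_0, p_1$ it becomes a \emph{conic bundle} $\eta\colon \widetilde{X} \to S$ whose discriminant curve is $\Delta$. Over $p \in S$ the fibre is the conic $\{y_0 y_1 = f(p)\}$, a smooth $\PP^1$ when $f(p) \neq 0$ and a pair of lines when $p \in \Delta$. The cyclic-cover comparison theorems you invoke therefore do not apply to this map, and there is no direct reduction of K-polystability of $X$ to that of $(S, c\Delta)$ along it. A second gap: $G = \CC^* \rtimes \ZZ/2\ZZ$ acts on $X$ through $y_0, y_1$ only, so it descends to the \emph{trivial} action on $S$; your plan to use $G$-equivariance to cut down the valuations on the surface pair gives no information.

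The paper's proof of \eqref{item:IHES_3} stays on the threefold. It shows $\widetilde{X}$ is a Mori Dream Space, then uses $G$-equivariant K-stability: any $G$-invariant destabilising centre on $\widetilde{X}$ is forced by the group action to be either a smooth fibre of $\eta$ or to meet the curve $\widetilde{Z} \cong \Delta$ of nodes of singular fibres. In each case one takes the flag $B = \eta^*(\mathscr{C}) \supset C_0$, with $\mathscr{C}$ a conic on $S$ through the image point, and computes the Abban--Zhuang refinements $S_L(B)$, $S(W^B_{\bullet,\bullet}; C_0)$, $S(W^{B,C_0}_{\bullet,\bullet,\bullet}; x)$ explicitly to reach a contradiction. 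The hypothesis $\Sigma \cap \Delta = \varnothing$ enters exactly here: it guarantees, via the conic geometry on a degree-$4$ del Pezzo, that $\mathscr{C}$ can be chosen transverse to $\Delta$ at the relevant point, which is what makes the Zariski decompositions on $B$ manageable.

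There \emph{is} a correct double-cover route, and it proves more. After rewriting $y_0 y_1 = f$ as $y_1^2 = y_0^2 - f$, forgetting $y_1$ exhibits $X$ as a double cover of the projective cone $Y = C_p(S, -2K_S) \subset \PP(1^5,2)$ branched along $D = \{y_0^2 = f\}|_Y$. One then proves $(Y, \tfrac{1}{2}D)$ is K-stable by degenerating $D$ to twice the section at infinity, using the cone construction (from K-stability of $S$) to get K-polystability of $(Y, \tfrac{5}{12}D_0)$, and interpolating toward the log Calabi--Yau threshold. This is the paper's Theorem~\ref{ALL}, which removes the hypothesis $\Sigma \cap \Delta = \varnothing$ entirely; your instinct to pass to a lower-dimensional problem was right, but the target is the cone $Y$, not $S$.
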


\begin{remark}
	The last condition $\Sigma\cap\Delta=\varnothing$ is satisfied for general choices of $f$, $g$, $h$.
	Therefore a general $X$ in Theorem~\ref{theorem:IHES} is \Kpolystable.
	We believe that the condition $\Sigma\cap\Delta=\varnothing$ can be removed from Theorem~\ref{theorem:IHES}\eqref{item:IHES_3}.
\end{remark}

\begin{remark}
	Note that if $X \subset \PP(1^5,2)$ is a hyperelliptic cone, i.e.\ the term $y^2$ does not appear in the degree $4$ equation, or if $X\subset\PP(1^5,2^2)$ is a cone, i.e.\ the quadratic part in the variables $y_0$, $y_1$ of the degree $4$ equation $f$ does not have rank $2$, then $X$ is unstable. This can be checked rather easily using \cite[Theorem~1.4]{ZhangZhou} by setting $r=1/2$, $c=0$, and $n=3$ to obtain $\delta(X)\leq 5/12$.
\end{remark}

The remaining of the section is devoted to giving a proof of Theorem~\ref{theorem:IHES}, hence we always work in the setting of Theorem~\ref{theorem:IHES}.
We begin with an explanation of the construction of the models, their symmetries, and the generality condition on them.
We then present the proof of the main claim about their \Kstability; the proof uses the theory of refinements introduced in~\cite{AbbanZhuang}, which provides a lower bound for stability thresholds. Indeed, some formulae are readily available in~\cite[\S1.7]{calabi_problem_3folds} which compute the refinement for a flag, if the flag is geometrically realised on a Mori Dream Space birational model of the Fano variety. We will use them in the proof of Theorem~\ref{theorem:IHES}\eqref{item:IHES_3}.

\subsection{The models and their symmetries} \label{sec:preliminaries_on_del_Pezzo_surface_degree4}

\begin{proof}[{Proof of Theorem~\ref{theorem:IHES}\eqref{item:IHES_1}}]
All of this is very classical.
\end{proof}

Conversely, if $S$ is a smooth del Pezzo surface of degree $4$, then we can view $S$ as the complete intersection $\{g=h=0\}\subset \mathbb{P}^4$ where $g$ and $h$ are quadrics in $x_0, \dots, x_4$.
We denote by $\Sigma$ the set consisting of the 40 intersection points of the lines on $S$.

\begin{remark}
\label{remark:conics}
Let $p$ be a point in $S$.
\begin{enumerate}
	\item If $p$ is not contained in any line in $S$ then there are exactly ten smooth conics in $S$ that contain $p$;
	moreover, five of these ten conics can be chosen to intersect pairwise transversally at $p$.
	\item If $p$ is contained in a line in $S$ and $p\notin \Sigma$, then
	there are exactly five smooth conics in $S$ that contain $p$, and any two of them intersect transversally at $p$.
	\item If $p\in\Sigma$, then it is contained in a unique smooth conic in $S$.
\end{enumerate}
\end{remark}


\begin{proof}[{Proof of Theorem~\ref{theorem:IHES}\eqref{item:IHES_2}}]
We study the singularities of $X$ by looking at the affine charts of $\PP(1^5, 2^2)$.

Let us consider the chart $x_0 \neq 0$; this is isomorphic to $\AA^6$ with affine coordinates $x_1, \dots, x_4$, $y_0$, $y_1$.
Inside this $\AA^6$ the variety $X$ is given by the equations
\begin{align*}
y_0 y_1 - \bar{f} &= 0 \\
\bar{g} &= 0 \\
\bar{h} &= 0
\end{align*}
where $\bar{f} = f \vert_{x_0 = 1}$ and similarly for $\bar{g}$ and $\bar{h}$.
The jacobian matrix of the equations of $X$ in $\AA^6$ is
\[
J = \begin{pmatrix}
	- \partial_{x_1} \bar{f} & - \partial_{x_2} \bar{f} & - \partial_{x_3} \bar{f} & - \partial_{x_4} \bar{f} & y_1 & y_0 \\
	 \partial_{x_1} \bar{g} & \partial_{x_2} \bar{g} & \partial_{x_3} \bar{g} & \partial_{x_4} \bar{g} & 0 & 0 \\
	 	 \partial_{x_1} \bar{h} & \partial_{x_2} \bar{h} & \partial_{x_3} \bar{h} & \partial_{x_4} \bar{h} & 0 & 0
\end{pmatrix}.
\]
Since $S \subset \PP^4$ is a smooth surface, the bottom-left $2 \times 4$ submatrix of $J$ has rank $2$ in all points of $X$.
Therefore it is clear that $J$ has rank $3$ at all points of $X$ which satisfy $y_0 \neq 0$ or $y_1 \neq 0$.
The points of $X$ which satisfy $y_0 = y_1 = 0$ also satisfy $f=0$, and from the smoothness of $\Delta$ it follows that the rank of $J$ is $3$ at these points.
Therefore the intersection of $X$ with the chart $x_0 \neq 0$ is smooth.
Similarly, this holds also for the other charts $x_i \neq 0$ for $i \in \{ 0, \dots, 4\}$. In other words, we have proven that the singular points of $X$ must satisfy $x_0 = \cdots = x_4 = 0$. There are exactly two such points:
\begin{equation*}
p_0=[0:0:0:0:0:1:0] \quad \text{and} \quad p_1= [0:0:0:0:0:0:1].
\end{equation*}
We need to study the singularity type of these two points on $X$.

The chart $y_0 \neq 0$ on $\PP(1^5, 2^2)$ gives the quotient singularity $\frac{1}{2}(1,1,1,1,1,0)$ with orbifold coordinates $x_0, \dots, x_4, y_1$.
It is easy to see that $X \cap \{ y_0 \neq 0 \}$ is isomorphic to $\{ g=h = 0 \} \subset \frac{1}{2}(1,1,1,1,1)$. Since $\frac{1}{2}(1,1,1,1,1)$ is the affine cone over $( \PP^4, \cO(2))$, we have that $\{ g=h = 0 \} \subset \frac{1}{2}(1,1,1,1,1)$ is the affine cone over $(S, -2K_S)$, which is a klt singularity.
This shows that $p_0$ and $p_1$ are klt singularities of $X$.

By adjunction $-K_X = \cO_{\PP(1^5,2^2)} (1) \vert_X$, so $X$ is a klt Fano \threefold. Its degree is $(-K_X)^3 = \frac{1^3 \cdot 4 \cdot 2 \cdot 2}{2^2} = 4$.

Now we need to construct an effective action of $G = \CC^* \rtimes \ZZ / 2\ZZ$ on $X$. Consider the $\mathbb{C}^\ast$-action given by
\[
\left[ x_0:x_1:x_2:x_3:x_4:y_0:y_1 \right] \mapsto \left[ x_0:x_1:x_2:x_3:x_4:\lambda y_0: \lambda^{-1} y_1 \right]
\]
for $\lambda\in \mathbb{C}^\ast$. Furthermore, the group $\mathrm{Aut}(X)$ also contains an involution $\sigma$ that is given by
\[
\left[ x_0:x_1:x_2:x_3:x_4:y_0:y_1 \right] \mapsto \left[ x_0:x_1:x_2:x_3:x_4:y_1:y_0 \right].
\]
Together, they generate a subgroup $G\subset \mathrm{Aut}(X)$ that is isomorphic to $\mathbb{C}^\ast\rtimes\mathbb{Z}/2\mathbb{Z}$. Note that the two singular points $p_0$ and $p_1$ on $X$ are swapped by the action of $\sigma$ and fixed by the action of $\CC^\ast$.
This concludes the proof of Theorem~\ref{theorem:IHES}\eqref{item:IHES_2}.
\end{proof}

Let $\rho\colon X\dasharrow S$ be the rational map given by
\[
\left[x_0:x_1:x_2:x_3:x_4:y_0:y_1\right] \mapsto \left[ x_0:x_1:x_2:x_3:x_4 \right].
\]
Then $\rho$ is undefined precisely at the points $p_0$ and $p_1$, resulting in the following $G$-equivariant commutative diagram:
\begin{equation}
\label{equation:small-diagram}
\xymatrix{
&&\widetilde{X}\ar@{->}[dll]_{\pi}\ar@{->}[rrd]^{\eta}&&\\%
X\ar@{-->}[rrrr]^{\rho}&&&&S}
\end{equation}
where $\widetilde{X}$ is a smooth projective \threefold,
$\pi$ is a birational morphism that contracts two irreducible smooth surfaces $E_0$ and $E_1$ to the points $p_0$ and $p_1$, respectively,
and $\eta$ is a $G$-equivariant conic bundle. Furthermore, the surfaces $E_0$ and $E_1$ are sections of the conic bundle $\eta$,
so that $\eta$ induces isomorphisms $E_0\cong S$ and $E_1\cong S$. We have $E_0\vert_{E_0}\sim -2K_{E_0}$ and $E_1\vert_{E_1}\sim -2K_{E_1}$,
which gives
\[
-K_{\widetilde{X}}\sim_{\mathbb{Q}}\pi^*(-K_X)+\frac{1}{2}(E_0+E_1)\sim_{\mathbb{Q}}\eta^*(-K_S)+E_0+E_1.
\]

\begin{cor}
\label{corollary:MDS}
The \threefold{} $\widetilde{X}$ is a Mori Dream Space.
\end{cor}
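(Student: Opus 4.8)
The plan is to realise $\widetilde X$ as the underlying variety of a klt weak log Fano pair and then invoke the standard consequence of the minimal model program that such a variety is a Mori dream space. The boundary is dictated by the discrepancy formula $-K_{\widetilde X}\sim_\QQ\pi^*(-K_X)+\tfrac12(E_0+E_1)$ established just above the statement: I would set $B\coloneqq\tfrac12(E_0+E_1)$, so that $-(K_{\widetilde X}+B)\sim_\QQ\pi^*(-K_X)$.

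First I would check that $(\widetilde X,B)$ is a $\QQ$-factorial klt pair. $\QQ$-factoriality is automatic since $\widetilde X$ is smooth. For kltness, note that $E_0$ and $E_1$ are smooth and are \emph{disjoint}: $\pi$ contracts $E_0$ to $p_0$ and $E_1$ to $p_1$ with $p_0\neq p_1$, so $E_0\cap E_1\subseteq\pi^{-1}(p_0)\cap\pi^{-1}(p_1)=\varnothing$; hence $\tfrac12(E_0+E_1)$ is a simple normal crossing divisor with all coefficients $<1$, and the pair is klt. Next I would observe that $-(K_{\widetilde X}+B)\sim_\QQ\pi^*(-K_X)$ is the pull-back along the birational projective morphism $\pi$ of the ample divisor $-K_X$ (recall $X$ is Fano by Theorem~\ref{theorem:IHES}\eqref{item:IHES_2}), and such a pull-back is always nef and big: nef because the pull-back of a nef divisor is nef, and big because $\bigl(\pi^*(-K_X)\bigr)^3=(-K_X)^3=4>0$ and a nef divisor with positive top self-intersection is big (equivalently, $h^0(\widetilde X,-m\pi^*K_X)=h^0(X,-mK_X)$ grows like $m^3$).

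Combining these, $(\widetilde X,B)$ is a $\QQ$-factorial projective klt pair with $-(K_{\widetilde X}+B)$ nef and big, i.e.\ $\widetilde X$ is of Fano type; therefore its Cox ring is finitely generated and $\widetilde X$ is a Mori dream space by \cite{BCHM}. I do not anticipate a genuine obstacle here. The only two points that warrant a moment's care are: (i) the disjointness of $E_0$ and $E_1$, which is what makes the coefficient $\tfrac12$ sufficient to keep the pair klt; and (ii) the fact that $\pi^*(-K_X)$, while nef and big, is not ample (it has degree zero on every curve contracted by $\pi$, e.g.\ on curves inside $E_0$), so one must apply the Mori-dream-space criterion in its weak-log-Fano form rather than the log-Fano form.
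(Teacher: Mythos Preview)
Your proof is correct and follows essentially the same approach as the paper: exhibit $\widetilde X$ as the ambient variety of a klt pair with boundary supported on $E_0+E_1$ whose anti-log-canonical class is positive, then cite~\cite{BCHM}. The only difference is that the paper takes coefficient $\tfrac{1+\varepsilon}{2}$ rather than $\tfrac12$, so that $-(K_{\widetilde X}+\tfrac{1+\varepsilon}{2}(E_0+E_1))\sim_\QQ\pi^*(-K_X)-\tfrac{\varepsilon}{2}(E_0+E_1)$ is genuinely \emph{ample} (since $-(E_0+E_1)$ is $\pi$-ample), avoiding the extra step of passing from nef and big to the log Fano formulation; you anticipated this distinction yourself in point~(ii).
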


\begin{proof}
Let $\varepsilon$ be a sufficiently small positive rational number.
Then $\left( \widetilde{X},\frac{1+\varepsilon}{2}(E_0+E_1) \right)$ has Kawamata log terminal singularities,
and $-(K_{\widetilde{X}}+\frac{1+\varepsilon}{2}(E_0+E_1))$ is ample.
The claim follows from~\cite[Corollary~1.3.1]{BCHM}.\end{proof}

Let $R_0$ and $R_1$ be the surfaces in $X$ that are cut out by $y_0=f=0$ and $y_1=f=0$, respectively.
Denote by $\widetilde{R}_0$ and $\widetilde{R}_1$ the strict transforms of $R_0$ and $R_1$ on $\widetilde{X}$, respectively.
Then $\eta^*(\Delta)=\widetilde{R}_0+\widetilde{R}_1$,
and $\Delta$ is the discriminant curve of the conic bundle $\eta$.
We have the following non-$G$-equivariant commutative diagram
\begin{equation}
\label{equation:big-diagram}
\xymatrix{
&&U\ar@{->}[ddrr]_{\phi}\ar@{->}[dll]_{\psi}&&\widetilde{X}\ar@{->}[dd]^{\eta}\ar@{->}[rr]^{\eta_1}\ar@{->}[ll]_{\eta_0}&&U\ar@{->}[ddll]^{\phi}\ar@{->}[drr]^{\psi}&&\\%
V\ar@{-->}[drrrr]_{\tau}&&&& &&&&V\ar@{-->}[dllll]^{\tau}\\
&&&&S&&&&}
\end{equation}
where $U=\mathbb{P}(\mathcal{O}_S(2K_S)\oplus\mathcal{O}_S)$,
$\phi$ is the $\mathbb{P}^1$-bundle given by the projection $\mathbb{P}(\mathcal{O}_S(2K_S)\oplus\mathcal{O}_S)\to S$,
the morphism $\psi$ is a contraction of the negative section of the $\mathbb{P}^1$-bundle $\phi$,
$V$ is a cone over $S$, the map $\tau$ is the projection from the vertex of the cone,
and morphisms $\eta_0$ and $\eta_1$ are birational contractions of the surfaces $\widetilde{R}_0$ and $\widetilde{R}_1$, respectively.

\begin{remark}
\label{remark:big-diagram}
In the left hand side of~\eqref{equation:big-diagram}, we have $\phi^*(\Delta)=\eta_0(R_1)$,
the morphism $\psi$ contracts $\eta_0(E_0)$, and $\psi\circ\eta_0(E_1)$ is a smooth hyperplane section of the cone $V\subset\mathbb{P}^{13}$.
In the right hand side of~\eqref{equation:big-diagram}, we have $\phi^*(\Delta)=\eta_1(R_0)$,
$\psi$ contracts $\eta_1(E_1)$, and $\psi\circ\eta_1(E_0)$ is a hyperplane section of the cone $V$.
The involution $\sigma$ swaps the left and the right hand sides of the diagram~\eqref{equation:big-diagram}.
\end{remark}

Set $Z=R_0\cap R_1$ and $\widetilde{Z}=\widetilde{R}_0\cap\widetilde{R}_1$.
Then $Z=\pi(\widetilde{Z})$, $Z\cong\widetilde{Z}\cong \Delta$, and $Z=\{y_0=y_1=f=0\}\subset X$.
Observe that $p_0\not\in Z$, $p_1\not\in Z$, $\widetilde{Z}\cap E_0=\widetilde{Z}\cap E_1=\varnothing$,
and every point in $Z$ and $\widetilde{Z}$ is $G$-invariant.

\begin{lemma}
\label{lemma:G-fixed-points}
Let $q$ be a $G$-invariant point in $\widetilde{X}$, let $C$ be a $G$-invariant irreducible curve in $\widetilde{X}$,
and let $B$ be a $G$-invariant irreducible surface in $\widetilde{X}$.
Then the following assertions hold:
\begin{enumerate}
\item $q\in\widetilde{Z}$;
\item either $C=\widetilde{Z}$ or $C$ is a smooth fibre of the conic bundle $\eta$;
\item $B=\eta^*(\mathscr{C})$ for some irreducible curve $\mathscr{C}\subset S$.
\end{enumerate}
In particular, we have that $B\cap\widetilde{Z}\ne\varnothing$.
\end{lemma}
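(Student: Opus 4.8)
The three assertions share one theme: understanding $G$-invariant subvarieties of $\widetilde{X}$ where $G=\CC^*\rtimes\ZZ/2\ZZ$. The guiding principle is that the $\CC^*$-action has a well-understood set of fixed loci, and the extra involution $\sigma$ cuts this down drastically. First I would identify the $\CC^*$-fixed locus on $X$ itself: from the explicit action $y_0\mapsto\lambda y_0$, $y_1\mapsto\lambda^{-1}y_1$, the fixed points are exactly the points with $y_0=0$ or $y_1=0$, i.e. the two surfaces $R_0=\{y_0=f=0\}$ and $R_1=\{y_1=f=0\}$ together with the two isolated singular points $p_0,p_1$ (where $\CC^*$ acts with nonzero weight on the tangent cone). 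Lifting to $\widetilde{X}$, the $\CC^*$-fixed locus is $\widetilde{R}_0\cup\widetilde{R}_1\cup E_0\cup E_1$, since $\pi$ is $G$-equivariant and $E_0,E_1$ are the exceptional divisors over $p_0,p_1$. Any $G$-invariant point $q$, curve $C$, or surface $B$ is in particular $\CC^*$-invariant; a $\CC^*$-invariant point must lie in the fixed locus, so $q\in \widetilde{R}_0\cup\widetilde{R}_1\cup E_0\cup E_1$.

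Next I would bring in $\sigma$. Since $\sigma$ swaps $p_0\leftrightarrow p_1$ it swaps $E_0\leftrightarrow E_1$ and $\widetilde{R}_0\leftrightarrow\widetilde{R}_1$; also $\sigma$ acts trivially on $S$ via $\eta$. For (1): a $G$-fixed point $q$ lies in the $\CC^*$-fixed locus; if $q\in E_0$ then $\sigma(q)\in E_1$, but $\sigma(q)=q$ forces $q\in E_0\cap E_1=\varnothing$ (the exceptional divisors over distinct points are disjoint), contradiction; hence $q\notin E_0\cup E_1$, so $q\in\widetilde{R}_0$ (or $\widetilde{R}_1$), and by the same swap argument $q\in\widetilde{R}_0\cap\widetilde{R}_1=\widetilde{Z}$. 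For (3): a $G$-invariant surface $B$ is $\CC^*$-invariant, and a $\CC^*$-invariant irreducible surface is either contained in the fixed locus — hence equal to one of $\widetilde{R}_0,\widetilde{R}_1,E_0,E_1$ — or is a union of $\CC^*$-orbit closures that is generically a one-parameter family; since $\eta$ is the quotient-type map whose fibres are the $\CC^*$-orbit closures (the conic bundle $\eta$ has $\CC^*$ acting along the fibres, with $E_0,E_1$ the two sections), any $\CC^*$-invariant surface is a union of fibres, i.e. $B=\eta^*(\mathscr C)$ for a curve $\mathscr C\subset S$; and the four fixed divisors are themselves of this shape ($\widetilde R_0+\widetilde R_1=\eta^*(\Delta)$, $E_i$ a section — wait, $E_i$ is a section not a pullback, but $E_i$ is not $\sigma$-invariant, so the $\sigma$-invariance kills that case). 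Thus $B=\eta^*(\mathscr C)$, and since $\sigma$ acts trivially on $S$ this is automatically $G$-invariant for any $\mathscr C$; irreducibility of $B$ forces $\mathscr C$ irreducible. For (2): a $G$-invariant curve $C$ is $\CC^*$-invariant, so either $C$ lies in the fixed locus or $C$ is a single $\CC^*$-orbit closure, i.e. a fibre of $\eta$. In the first case $C\subset\widetilde R_0\cup\widetilde R_1\cup E_0\cup E_1$; a $\CC^*$-fixed irreducible curve inside $\widetilde R_0\cong S$ would have to be $\sigma$-swapped into $\widetilde R_1$, forcing $C\subset\widetilde R_0\cap\widetilde R_1=\widetilde Z$, hence $C=\widetilde Z$ (which is irreducible, $\cong\Delta$); a curve in $E_0$ is likewise swapped into $E_1$, impossible since they are disjoint. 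In the second case, $C$ is a fibre of $\eta$; it must be an irreducible fibre, and the reducible fibres of $\eta$ lie over $\Delta$ — but actually we need $C$ itself $G$-invariant, and a smooth (irreducible) fibre over a point of $S\setminus\Delta$ is preserved by $\CC^*$ and by $\sigma$ (which fixes the base point). So $C$ is a smooth fibre of $\eta$, unless it equals $\widetilde Z$.

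Finally the "in particular" clause: if $B$ is a $G$-invariant irreducible surface then by (3) $B=\eta^*(\mathscr C)$ for an irreducible curve $\mathscr C\subset S$; since $\widetilde Z\cong\Delta$ maps isomorphically onto the curve $\Delta\subset S$ under $\eta$, we have $B\cap\widetilde Z=\eta^{-1}(\mathscr C)\cap\widetilde Z$ which, via $\eta|_{\widetilde Z}$, corresponds to $\mathscr C\cap\Delta$. Two irreducible curves on the smooth projective surface $S$ always meet (any two effective divisors on a surface with Picard rank $\ge 1$ and ample anticanonical have positive intersection, or simply: $\mathscr C$ and $\Delta$ are both ample as $S$ is del Pezzo of degree $4$, so $\mathscr C\cdot\Delta>0$), hence $\mathscr C\cap\Delta\neq\varnothing$ and therefore $B\cap\widetilde Z\neq\varnothing$.

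The main obstacle is being careful about the two degenerate cases: $\CC^*$-invariant curves that are not fibres but lie inside the sections $E_0,E_1$ or inside $\widetilde R_0,\widetilde R_1$, and reducible fibres over $\Delta$. The clean way around this is to exploit that $\sigma$ swaps the two "halves" of the fixed locus while acting trivially on the base $S$; this is what forces any $G$-invariant subvariety either down onto $\widetilde Z$ or up to being a genuine fibre/pullback. I would state the $\CC^*$-fixed-locus computation on $X$ as a preliminary observation and then dispatch all three items by this swap argument, and I would cite the smoothness of $\Delta$ and $S$ (Theorem~\ref{theorem:IHES}\eqref{item:IHES_1}, \eqref{item:IHES_2}) and the structure of diagram~\eqref{equation:small-diagram} for the claims about $\eta$, $E_i$, $\widetilde R_i$, and $\widetilde Z$.
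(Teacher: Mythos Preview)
The paper's own ``proof'' of this lemma is simply ``Left to the reader'', so there is no detailed argument to compare against. Your overall strategy --- identify the $\CC^*$-fixed locus on $\widetilde X$, then use that $\sigma$ swaps $E_0\leftrightarrow E_1$ and $\widetilde R_0\leftrightarrow\widetilde R_1$ while acting trivially on $S$ --- is the right one and leads to all three conclusions.

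However, your computation of the $\CC^*$-fixed locus is wrong. A point $[x:y_0:y_1]\in X$ with some $x_i\neq 0$ is fixed by $[x:y_0:y_1]\mapsto[x:\lambda y_0:\lambda^{-1}y_1]$ for all $\lambda$ only when \emph{both} $y_0=y_1=0$, not ``$y_0=0$ or $y_1=0$''. Thus the $\CC^*$-fixed locus on $X$ is $Z\cup\{p_0,p_1\}$, and on $\widetilde X$ it is $\widetilde Z\sqcup E_0\sqcup E_1$ (the sections $E_0,E_1$ are pointwise fixed because on each conic fibre $\CC^*$ acts with exactly these two fixed points). The surfaces $\widetilde R_0,\widetilde R_1$ are $\CC^*$-invariant but not pointwise fixed; also $\widetilde R_0$ is a $\PP^1$-bundle over $\Delta$, not isomorphic to $S$ as you wrote. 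Fortunately your argument only uses the fixed-locus claim as an \emph{upper bound} for where $q$ (or $C$) can live, and your too-large set $\widetilde R_0\cup\widetilde R_1\cup E_0\cup E_1$ still contains the correct fixed locus, so parts (1) and (2) survive --- and in fact become cleaner: for (1), once $q\in\widetilde Z\cup E_0\cup E_1$ and the swap rules out $E_0\cup E_1$, you are done immediately. For (2), when $C$ is an orbit closure inside a \emph{singular} fibre over $p\in\Delta$, you should note explicitly that $\sigma$ swaps its two irreducible components (they lie in $\widetilde R_0$ and $\widetilde R_1$ respectively), forcing $C$ to be a smooth fibre.

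One further inaccuracy in the ``in particular'' clause: you assert $\mathscr C$ is ample on $S$, but an arbitrary irreducible curve on a degree-$4$ del Pezzo (e.g.\ one of the sixteen lines) need not be ample. What saves the argument is that $\Delta\in|-4K_S|$ \emph{is} ample, so $\mathscr C\cdot\Delta>0$ for any curve $\mathscr C$; hence $\mathscr C\cap\Delta\neq\varnothing$, and via the isomorphism $\eta|_{\widetilde Z}\colon\widetilde Z\xrightarrow{\ \sim\ }\Delta$ this gives $B\cap\widetilde Z\neq\varnothing$.
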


\begin{proof}
Left to the~reader.
\end{proof}

\subsection{\Kstability{} of the general models}

The proof of Theorem~\ref{theorem:IHES}\eqref{item:IHES_3} is quite involved. However, it is less difficult to produce specific examples of $X$ with large symmetries and prove they are \Kpolystable{} by taking advantage of the group of symmetries. Here, for illutration, we present one such case. Then we proceed by proving that a general $X$ is \Kpolystable{} (see Theorem~\ref{theorem:IHES}\eqref{item:IHES_3}).

\begin{proof}[{Special case of Theorem~\ref{theorem:IHES}\eqref{item:IHES_3}: diagonal models}]
	Suppose that
	\begin{align*}
		f&=\alpha_0x_0^4+\alpha_1x_1^4+\alpha_2x_2^4+\alpha_3x_3^4+\alpha_4x_0^4, \\
			g&=x_0^2+x_1^2+x_2^2+x_3^2+x_4^2, \\
		h&=\epsilon_0x_0^2+\epsilon_1x_1^2+\epsilon_2x_2^2+\epsilon_3x_3^2+\epsilon_4x_4^2,
	\end{align*}
where $\alpha_0,\dots,\alpha_4$ are sufficiently general complex numbers and $\epsilon_0, \dots, \epsilon_4$ are pairwise distinct complex numbers.
Then the group $\mathrm{Aut}(X)$ is larger than the group $G$ in Theorem~\ref{theorem:IHES}\eqref{item:IHES_2}, because $\mathrm{Aut}(X)$ also contains
$15$ additional involutions given by
\[
\left[x_0:x_1:x_2:x_3:x_4:y_0:y_1\right] \mapsto \left[x_0:(-1)^ax_1:(-1)^b x_2:(-1)^c x_3:(-1)^dx_4:y_0:y_1 \right]
\]
for $a,b,c,d\in\{0,1\}$.
Let $\Gamma$ be the subgroup in $\mathrm{Aut}(X)$ generated by $G$ and these $15$ involutions.
Then $\Gamma\cong\mathbb{C}^\ast\rtimes(\mathbb{Z}/2\mathbb{Z})^5$, and~\eqref{equation:small-diagram} is $\Gamma$-equivariant.
Note that $X$ does not contain $\Gamma$-invariant points.
Furthermore, using the diagram~\eqref{equation:small-diagram} and~\cite[Theorem~6.9]{DolgachevIskovskikh}, we conclude that $\mathrm{Cl}^G(X)=\mathbb{Z}[-K_X]$.
Now, arguing as in the proof of~\cite[Theorem~1.52]{calabi_problem_3folds}, we conclude that
\[
\alpha_\Gamma (X) \geq 1,
\]
where $\alpha_\Gamma(X)$ is the $\Gamma$-invariant $\alpha$-invariant of Tian for the \threefold{} $X$, see~\cite{Tian-alpha} for the analytic definition and~\cite{CS-alpha} for an algebraic definition of $\alpha$.
It follows that $X$ is \Kpolystable{} by~\cite{Tian-alpha} as \[\alpha_\Gamma\big(X\big)>\dfrac{3}{4}=\dfrac{\dim X}{1+\dim X}.\]
This concludes the claim that $X$ is \Kpolystable.
\end{proof}

\begin{proof}[{Proof of Theorem~\ref{theorem:IHES}\eqref{item:IHES_3}}]
Suppose that $X$ is not \Kpolystable.
By~\cite[Corollary~4.14]{Zhuang}, there exists a $G$-invariant prime divisor $\mathbf{F}$ over $X$ for which $\beta(\mathbf{F})\leqslant 0$.
For the definition of $\beta(\mathbf{F})$, see~\cite{Fujita-beta,Li-beta}. Let $\mathfrak{C}$ be the center of the divisor $\mathbf{F}$ on $X$. Then, for every point $P\in \mathfrak{C}$, we must have $\delta_P(X)\leqslant 1$,
where $\delta_P(X)$ is the local stability threshold of $X$ at $p$. For the~precise definition of $\delta_P(X)$, see~\cite[Definition 2.5]{AbbanZhuang} and~\cite[\S~1.5]{calabi_problem_3folds}. On the other hand, by Lemma~\ref{lemma:G-fixed-points}, we have the following four possibilities:
\begin{enumerate}
\item $\mathfrak{C}$ is a $G$-invariant point in $Z$;
\item $\mathfrak{C}$ is the curve $Z$;
\item $\mathfrak{C}$ is an irreducible fibre of the~rational map $\rho\colon X\dasharrow S$;
\item $\mathfrak{C}$ is a $G$-invariant surface and $\mathfrak{C}\cap Z\ne\varnothing$.
\end{enumerate}
Hence, we see that either $\mathfrak{C}$ is an irreducible curve that is a fibre of the~rational map $\rho\colon X\dasharrow S$,
or the curve $Z$ contains a point $x$ such that $\delta_x(X)\leqslant 1$. We will show that both cases are impossible, by first lifting the problem to the level of $\widetilde{X}$, then using the results obtained in~\cite{AbbanZhuang} and~\cite[\S~1.7]{calabi_problem_3folds} to derive a contradiction.

Since $\widetilde{X}$ is smooth and it is a Mori Dream Space, it is more convenient to work on~$\widetilde{X}$ than on~$X$.
Namely, we let $L=\pi^*(-K_X)$, and we let $\widetilde{\mathfrak{C}}$ be the~center of the~divisor~$\mathbf{F}$ on the \threefold{} $\widetilde{X}$.
Then $\widetilde{\mathfrak{C}}$ is the strict transform of the center $\mathfrak{C}$,
and we have the following four possibilities:
\begin{enumerate}
\item $\widetilde{\mathfrak{C}}$ is a $G$-invariant point in $\widetilde{Z}$;
\item $\widetilde{\mathfrak{C}}$ is the curve $\widetilde{Z}$;
\item $\widetilde{\mathfrak{C}}$ is a smooth fibre of the~conic bundle $\eta\colon\widetilde{X}\to S$;
\item $\widetilde{\mathfrak{C}}$ is a $G$-invariant surface and $\widetilde{\mathfrak{C}}\cap\widetilde{Z}\ne\varnothing$.
\end{enumerate}
Note that $\delta_x(\widetilde{X},L)=\delta_{\pi(x)}(X)$ for every point $x\in\widetilde{Z}$.
For the definition of $\delta_x(\widetilde{X},L)$, see~\cite[Definition 2.5]{AbbanZhuang} and~\cite[\S~1.5]{calabi_problem_3folds}.
Therefore, we conclude that one of the following two cases hold:
\begin{enumerate}
\item[($\diamondsuit$)] $\widetilde{\mathfrak{C}}$ a smooth fibre of the~conic bundle $\eta\colon\widetilde{X}\to S$,
\item[($\heartsuit$)] there exists a point $x\in\widetilde{Z}$ such that $\delta_x(\widetilde{X},L)\leqslant 1$.
\end{enumerate}
In both cases, let us introduce a new curve $\mathcal{C}$ in the \threefold{} $\widetilde{X}$ as follows:
\begin{itemize}
\item in case ($\diamondsuit$), we let $\mathcal{C}=\widetilde{\mathfrak{C}}$,
\item in case ($\heartsuit$), we let $\mathcal{C}$ be the (singular) fibre of the conic bundle $\eta$ that contains $x$.
\end{itemize}

Let $p=\eta(\mathcal{C})$. It follows from Remark~\ref{remark:conics} that $S$
contains a smooth conic $\mathscr{C}$ such that $p\in\mathscr{C}$.
Moreover, if $p\in\Delta$, it follows from Remark~\ref{remark:conics} that we can also choose $\mathscr{C}$ such that it intersects the curve $\Delta$
transversally at the point $p$, because $\Sigma\cap\Delta=\varnothing$ by assumption.

Let $B=\eta^*(\mathscr{C})$, and let $B^\prime=\eta^*(Z)$ for a general conic $Z\in |-K_S-\mathscr{C}|$.
Then $B$ is normal, it has at most Du Val singularities of type $\mathbb{A}$,
it is smooth along the curve $\mathcal{C}$, and $B^\prime$ is smooth.

Let us compute $\beta(B)$. We have $\beta(B)=A_X(B)-S_L(B)=1-S_L(B)$, where
\[
S_L(B)=\frac{1}{L^3}\int_{0}^\infty\mathrm{vol}\big(L-uB\big)du.
\]
For $u\in\mathbb{R}_{\geqslant 0}$, the divisor $L-uB$ is pseudo-effective $\iff$ $u \leqslant 1$, because
\[
L-uB\sim_{\mathbb{Q}}(1-u)B+B^\prime+\frac{1}{2}\big(E_0+E_1\big).
\]
For $u\in[0,1]$, let $P(u)$ be the positive part of the Zariski decomposition of the divisor $L-uB$,
and let $N(u)$ be the negative part of the Zariski decomposition of the divisor $L-uB$.
Then
\[
P(u)=L-uB-\frac{u}{2}\big(E_0+E_1\big)
\]
and $N(u)=\frac{u}{2}(E_0+E_1)$ for every $u\in[0,1]$. This gives
\[
S_L(B)=\frac{1}{L^3}\int_{0}^\infty\mathrm{vol}\big(L-uB\big)du=\frac{1}{4}\int_{0}^1\big(P(u)\big)^3du=\frac{1}{4}\int_{0}^1(2u^3-6u+4)du=\frac{3}{8},
\]
which implies that $\beta(B)=\frac{5}{8}$. Here, we used the following intersections on the \threefold{} $\widetilde{X}$:
\begin{center}
$B^3=0$, $(B^\prime)^3=0$, $E_0^3=16$, $E_1^3=16$, $E_0^2\cdot E_1=0$, $E_0\cdot E_1^2=0$, $E_0\cdot B\cdot E_1=0$, \\
$E_0\cdot B^\prime\cdot E_1=0$, $E_0\cdot B\cdot B^\prime=2$, $B\cdot B^\prime\cdot E_1=2$, $E_0\cdot B^2=0$, $B^2\cdot B^\prime=0$, $E_0\cdot (B^\prime)^2=0$, $B\cdot (B^\prime)^2=0$, \\
$B^2\cdot E_1=0$, $E_0^2\cdot B=-4$, $B\cdot E_1^2=-4$, $E_0^2\cdot B^\prime=-4$, $B^\prime\cdot E_1^2=-4$, $(B^\prime)^2\cdot E_1=0$.
\end{center}

We set $\mathbf{e}_0=E_0\vert_{B}$ and $\mathbf{e}_1=E_1\vert_{B}$.
Then~$\mathbf{e}_0$ and $\mathbf{e}_1$ are smooth irreducible rational disjoint curves,
and $B$ is smooth along $\mathbf{e}_0$ and $\mathbf{e}_1$.
On $B$, we have $\mathbf{e}_0^2=\mathbf{e}_1^2=-4$, $\mathbf{e}_0\cdot\mathbf{e}_1=0$, $\mathbf{e}_0\cdot\mathcal{C}=\mathbf{e}_1\cdot\mathcal{C}=1$.

Suppose that case ($\diamondsuit$) holds.
Then $\widetilde{\mathfrak{C}}$ is a smooth fibre of the~conic bundle $\eta$, and $\mathcal{C}=\widetilde{\mathfrak{C}}$. Set
\[
S\big(W^B_{\bullet,\bullet};\mathcal{C}\big)=\frac{3}{L^3}\int_0^1\int_0^{\infty} \mathrm{vol}\big(P(u)\big\vert_{B}-v\mathcal{C}\big)dvdu.
\]
Using~\cite[Corollary~1.7.26]{calabi_problem_3folds}, we get $S(W^B_{\bullet,\bullet};\mathcal{C})\geqslant 1$,
since $\beta(\mathbf{F})\leqslant 0$, $\beta(B)>0$ and $\mathcal{C}\not\subset\mathrm{Supp}(N(u))$.
On the other hand, it is easy to compute $S(W^B_{\bullet,\bullet};\mathcal{C})$. Indeed, take $v\in\mathbb{R}_{\geqslant 0}$. Then
\[
P(u)\big\vert_{B}-v\mathcal{C}\sim_{\mathbb{R}} (2-v)\mathcal{C}+\frac{1-u}{2}\big(\mathbf{e}_0+\mathbf{e}_1\big).
\]
Therefore, the divisor $P(u)\vert_{B}-v\mathcal{C}$ is nef for $v\leqslant 2u$, and it is not pseudo-effective for $v>2$.
Moreover, if $v\in[2u,2]$, the positive part of its Zariski decomposition is $\frac{2-v}{4}(4\mathcal{C}+\mathbf{e}_0+\mathbf{e}_1)$.
Then
\[
\mathrm{vol}\big(P(u)\vert_{B}-vC\big)=
\left\{\aligned
&2-2u^2+2uv-2v\ \text{if $0\leqslant v\leqslant 2u$}, \\
&\frac{(v-2)^2}{2}\ \text{if $2u\leqslant v\leqslant 2$}, \\
&0\ \text{if $v\geqslant 2$}.
\endaligned
\right.
\]
Integrating, we get $S(W^B_{\bullet,\bullet};\mathcal{C})=\frac{3}{4}$, which is a contradiction.
Thus, case ($\diamondsuit$) does not hold.

Now, we assume that the case ($\heartsuit$) holds, and derive a contradiction.
Recall that this means that $\delta_x(\widetilde{X},L)\leqslant 1$ for some point $x\in\widetilde{Z}$.
In this case, the curve $\mathcal{C}$ is singular.
Namely, we have $\mathcal{C}=C_0+C_1$, where $C_0$ and $C_1$ are smooth irreducible rational curves that intersect each other transversally at $x$.
Without loss of generality, we may assume that $C_0\cap E_0\ne\varnothing$.
Moreover, since the surface $B$ is smooth along $\mathcal{C}$,
the numerical intersections of the curves $C_0$, $C_1$, $\mathbf{e}_0$, $\mathbf{e}_1$ on the surface $B$ are given in the following table:
\begin{center}
\renewcommand\arraystretch{1.4}
\begin{tabular}{|c||c|c|c|c|}
\hline & $C_0$ & $C_1$ & $\mathbf{e}_0$ & $\mathbf{e}_1$ \\
\hline
\hline
$C_0$ & $-1$ & $1$& $1$&$0$\\
\hline
$C_1$ & $1$ & $-1$ & $0$ & $1$\\
\hline
$\mathbf{e}_0$ & $1$ & $0$ & $-4$ & $0$\\
\hline
$\mathbf{e}_1$ & $0$ & $1$ & $0$ & $-4$\\
\hline
\end{tabular}
\end{center}

Let us estimate $\delta_x(\widetilde{X},L)$ using~\cite[Theorem~1.7.30]{calabi_problem_3folds}.
We have
$N(u)\vert_B=\frac{u}{2}(\mathbf{e}_0+\mathbf{e}_1)$ and $x\not\in\mathbf{e}_0\cup\mathbf{e}_1$.
For every $u\in [0,1]$, we let
\[
t(u)=\inf\Big\{v\in \mathbb R_{\geqslant 0} \ \big|\ \text{the divisor $P(u)\big|_B-vC_0$ is pseudo-effective}\Big\}.
\]
For $v\in [0, t(u)]$, we let $P(u,v)$ be the~positive part of the Zariski decomposition of $P(u)|_B-vC_0$, and we let $N(u,v)$ be its negative part.
Then we set
\[
S\big(W^B_{\bullet,\bullet};C_0\big)=\frac{3}{L^3}\int_0^1\int_0^{t(u)} \mathrm{vol}\big(P(u)\big\vert_{B}-vC_0\big)dvdu.
\]
Note that $C_0\not\subset \mathrm{Supp}(N(u,v))$ for every $u\in [0,1)$ and $v\in (0,t(u))$.
Thus, we can let
\[
F_x\big(W_{\bullet,\bullet,\bullet}^{B,C_0}\big)=\frac{6}{L^3} \int_0^1\int_0^{t(u)}\big(P(u,v)\cdot C_0\big)\cdot \mathrm{ord}_x\big(N(u,v)\big|_{C_0}\big)dvdu.
\]
Finally, we let
\[
S\big(W_{\bullet, \bullet,\bullet}^{B,C_0};x\big)=\frac{3}{L^3}\int_0^1\int_0^{t(u)}\big(P(u,v)\cdot C_0\big)^2dvdu+F_x\big(W_{\bullet,\bullet,\bullet}^{B,C_0}\big).
\]
Then, since $C_0\not\subset\mathrm{Supp}(N(u))$, it follows from~\cite[Theorem~1.7.30]{calabi_problem_3folds} that
\[
1\geqslant\delta_{x}(\widetilde{X},L)\geqslant \min\left\{\frac{1}{S(W_{\bullet, \bullet,\bullet}^{B,C_0}; x)}, \frac{1}{S(W_{\bullet,\bullet}^B;C_0)},\frac{1}{S_L(B)}\right\}.
\]
Recall that $S_L(B)=\frac{3}{8}$. Thus, either $S(W_{\bullet, \bullet,\bullet}^{B,C_0}; x)\geqslant 1$ or $S(W_{\bullet,\bullet}^B;C_0)\geqslant 1$ (or both).

Let us compute $S(W_{\bullet, \bullet,\bullet}^{B,C_0}; x)$ and $S(W_{\bullet,\bullet}^B;C_0)$.
As above, take $v\in\mathbb{R}_{\geqslant 0}$. Then
\[
P(u)\big\vert_{B}-vC_0\sim_{\mathbb{R}} (2-v)C_0+2C_1+\frac{1-u}{2}\big(\mathbf{e}_0+\mathbf{e}_1\big).
\]
Therefore, since the intersection form of the curves $C_1$, $\mathbf{e}_0$, $\mathbf{e}_1$ on the surface $B$ is negative definite,
we see that $P(u)\vert_{B}-vC_0$ is pseudoeffective $\iff$ $v\leqslant 2$, so $t(u)=2$.
Moreover, if $0\leqslant u\leqslant\frac{1}{5}$, then
\[
P(u,v)=\left\{\arraycolsep=8pt\def\arraystretch{1.6}\begin{array}{ll}
(2-v)C_0+2C_1+\frac{1}{2}(1-u)\big(\mathbf{e}_0+\mathbf{e}_1\big) & \text{if $0\leqslant v\leqslant 2u$}, \\
\frac{1}{4}(2-v)\big(4C_0+\mathbf{e}_0\big)+2C_1+\frac{1}{2}(1-u)\mathbf{e}_1 & \text{if $2u\leqslant v\leqslant \frac{1}{2}(1-u)$}, \\
\frac{1}{4}(2-v)\big(4C_0+\mathbf{e}_0\big)+\frac{1}{2}(5-2v-u)C_1+\frac{1}{2}(1-u)\mathbf{e}_1 & \text{if $1-u\leqslant 2v\leqslant 1+3u$}, \\
\frac{1}{12}(2-v)(12C_0 + 16C_1+3\mathbf{e}_0+4\mathbf{e}_1) & \text{if $\frac{1}{2}(1+3u)\leqslant v\leqslant 2$},
\end{array}
\right.
\]
and,
\[
N(u,v)=\left\{\arraycolsep=8pt\def\arraystretch{1.6}\begin{array}{ll}
0& \text{if $0\leqslant v\leqslant 2u$}, \\
\frac{1}{4}(v-2u)\mathbf{e}_0 &\text{if $2u\leqslant v\leqslant \frac{1}{2}(1-u)$}, \\
\frac{1}{4}(v-2u)\mathbf{e}_0+\frac{1}{2}(2v+u-1)C_1 &\text{if $1-u\leqslant 2v\leqslant 1+3u$}, \\
\frac{1}{4}(v-2u)\mathbf{e}_0+\frac{1}{3}(4v-2)C_1+\frac{1}{6}(2v-3u-1)\mathbf{e}_1 &\text{if $\frac{1}{2}(1+3u)\leqslant v\leqslant 2$},
\end{array}
\right.
\]
which compute
\[
\hspace{-4cm}
P(u,v)\cdot C_0=\left\{\arraycolsep=8pt\def\arraystretch{1.6}\begin{array}{ll}
\frac{1}{2}(1-u+2v) & \text{if $0\leqslant v\leqslant 2u$}, \\
\frac{1}{4}(2+3v) & \text{if $2u\leqslant v\leqslant \frac{1}{2}(1-u)$}, \\
\frac{1}{4}(4-2u-v)& \text{if $1-u\leqslant 2v\leqslant 1+3u$}, \\
\frac{1}{12}(14-7v) & \text{if $\frac{1+3u}{2}\leqslant v\leqslant 2$},
\end{array}
\right.
\]
\[
\hspace{-2.2cm}
\mathrm{vol}\big(P(u)\vert_{B}-vC_0\big)=\left\{\arraycolsep=8pt\def\arraystretch{1.6}\begin{array}{ll}
2-2u^2+uv-v^2-v& \text{if $0\leqslant v\leqslant 2u$}, \\
2-v-u^2-\frac{3}{4}v^2& \text{if $2u\leqslant v\leqslant \frac{1}{2}(1-u)$}, \\
\frac{1}{4}(9-3u^2+4uv+v^2-2u-8v)& \text{if $1-u\leqslant 2v\leqslant 1+3u$}, \\
\frac{7}{12}(2-v)^2& \text{if $\frac{1}{2}(1+3u)\leqslant v\leqslant 2$}.
\end{array}
\right.
\]
Similarly, if $\frac{1}{5}\leqslant u\leqslant 1$, then
\[
\hspace{1cm}
P(u,v)=\left\{\arraycolsep=8pt\def\arraystretch{1.6}\begin{array}{ll}
(2-v)C_0+2C_1+\frac{1}{2}(1-u)\big(\mathbf{e}_0+\mathbf{e}_1\big)& \text{if $0\leqslant v\leqslant \frac{1}{2}(1-u)$}, \\
(2-v)C_0+\frac{1}{2}(5-2v-u)C_1+\frac{1}{2}(1-u)\big(\mathbf{e}_0+\mathbf{e}_1\big)& \text{if $\frac{1}{2}(1-u)\leqslant v\leqslant 2u$}, \\
\frac{1}{4}(2-v)\big(4C_0+\mathbf{e}_0\big)+\frac{1}{2}(5-2v-u)C_1+\frac{1}{2}(1-u)\mathbf{e}_1& \text{if $1-u\leqslant 2v\leqslant 1+3u$}, \\
\frac{1}{12}(2-v)(12C_0 + 16C_1+3\mathbf{e}_0+4\mathbf{e}_1)& \text{if $\frac{1}{2}(1+3u)\leqslant v\leqslant 2$},
\end{array}
\right.
\]
\[\hspace{0.2cm}
N(u,v)=\left\{\arraycolsep=8pt\def\arraystretch{1.6}\begin{array}{ll}
0& \text{if $0\leqslant v\leqslant \frac{1}{2}(1-u)$}, \\
\frac{1}{2}(2v+u-1)C_1& \text{if $\frac{1}{2}(1-u)\leqslant v\leqslant 2u$}, \\
\frac{1}{4}(v-2u)\mathbf{e}_0+\frac{1}{2}(2v+u-1)C_1& \text{if $1-u\leqslant 2v\leqslant 1+3u$}, \\
\frac{1}{4}(v-2u)\mathbf{e}_0+\frac{1}{3}(4v-2)C_1+\frac{1}{6}(2v-3u-1)\mathbf{e}_1& \text{if $\frac{1}{2}(1+3u)\leqslant v\leqslant 2$},
\end{array}
\right.
\]

\[\hspace{-5.4cm}
P(u,v)\cdot C_0=\left\{\arraycolsep=8pt\def\arraystretch{1.6}\begin{array}{ll}
\frac{1}{2}(1-u+2v)& \text{if $0\leqslant v\leqslant \frac{1}{2}(1-u)$}, \\
1-u& \text{if $\frac{1}{2}(1-u)\leqslant v\leqslant 2u$}, \\
\frac{1}{4}(4-2u-v)& \text{if $1-u\leqslant 2v\leqslant 1+3u$}, \\
\frac{1}{12}(14-7v)& \text{if $\frac{1}{2}(1+3u)\leqslant v\leqslant 2$},
\end{array}
\right.
\]
\[
\mathrm{vol}\big(P(u)\vert_{B}-vC_0\big)=\left\{\arraycolsep=8pt\def\arraystretch{1.6}\begin{array}{ll}
2-2u^2+uv-v^2-v& \text{if $0\leqslant v\leqslant \frac{1}{2}(1-u)$}, \\
\frac{1}{4}(1-u)(7u-8v+9)& \text{if $\frac{1}{2}(1-u)\leqslant v\leqslant 2u$}, \\
\frac{1}{4}(9-3u^2+4uv+v^2-2u-8v)& \text{if $1-u\leqslant 2v\leqslant 1+3u$}, \\
\frac{7}{12}(2-v)^2& \text{if $\frac{1}{2}(1+3u)\leqslant v\leqslant 2$}.
\end{array}
\right.
\]

Now, integrating $\mathrm{vol}(P(u)\vert_{B}-vC_0)$ and $(P(u,v)\cdot C_0)^2$, we obtain $S(W_{\bullet,\bullet}^B;C_0)=\frac{13}{16}$ and
\[
S\big(W_{\bullet,\bullet,\bullet}^{B,C_0};x\big)=\frac{77}{320}+F_x\big(W_{\bullet,\bullet,\bullet}^{B,C_0}\big).
\]
Furthermore, since $x\not\in\mathbf{e}_0\cup\mathbf{e}_1$ and the curves $C_0$ and $C_1$ intersect transversally at $x$,
we have

\begin{align*}
F_x\big(W_{\bullet,\bullet,\bullet}^{B,C_0}\big)&=\frac{3}{2}\int_0^1\int_0^{2}\mathrm{ord}_x\big(N(u,v)\big|_{C_0}\big)\times \big(P(u,v)\cdot C_0\big)dvdu\\
&=\frac{3}{2}\int_0^{\frac{1}{5}}\int_{\frac{1-u}{2}}^{\frac{1+3u}{2}}\frac{2v+u-1}{2}\times\big(P(u,v)\cdot C_0\big)dvdu+
\frac{3}{2}\int_0^{\frac{1}{5}}\int_{\frac{1+3u}{2}}^{2}\frac{4v-2}{3}\times\big(P(u,v)\cdot C_0\big)dvdu\\
&+\frac{3}{2}\int_{\frac{1}{5}}^{1}\int_{\frac{1-u}{2}}^{\frac{1+3u}{2}}\frac{2v+u-1}{2}\times\big(P(u,v)\cdot C_0\big)dvdu+
\frac{3}{2}\int_{\frac{1}{5}}^{1}\int_{\frac{1+3u}{2}}^{2}\frac{4v-2}{3}\times\big(P(u,v)\cdot C_0\big)dvdu\\
&=\frac{3}{2}\int_0^{\frac{1}{5}}\int_{\frac{1-u}{2}}^{\frac{1+3u}{2}}\frac{2v+u-1}{2}\times\frac{4-2u-v}{4}dvdu+
\frac{3}{2}\int_0^{\frac{1}{5}}\int_{\frac{1+3u}{2}}^{2}\frac{4v-2}{3}\times\frac{14-7v}{12}dvdu\\
&+\frac{3}{2}\int_{\frac{1}{5}}^{1}\int_{\frac{1-u}{2}}^{2u}\frac{2v+u-1}{2}\times(1-u)dvdu+
\frac{3}{2}\int_{\frac{1}{5}}^{1}\int_{2u}^{\frac{1+3u}{2}}\frac{2v+u-1}{2}\times\frac{4-2u-v}{4}dvdu\\
&+\frac{3}{2}\int_{\frac{1}{5}}^{1}\int_{\frac{1+3u}{2}}^{2}\frac{4v-2}{3}\times\frac{14-7v}{12}dvdu=\frac{183}{320},
\end{align*}
which gives $S(W_{\bullet,\bullet,\bullet}^{B,C_0};x)=\frac{13}{16}$. Thus, we see that $S(W_{\bullet, \bullet,\bullet}^{B,C_0}; x)=S(W_{\bullet,\bullet}^B;C_0)=\frac{13}{16}<1$, which is a contradiction.
This completes the proof of Theorem~\ref{theorem:IHES}.
\end{proof}


\section{K-moduli compactification}\label{limits}
As in previous sections, let $X$ be a pure complete intersection $X_{2,2,4}\!\subset\! \PP(1^5, 2^2)$, i.e.\ a $(2,2,4)$-complete intersection in $\PP(1^5,2^2)$ which is not a $(2,4)$-complete intersection in $\PP(1^5,2)$.
For convenience,  assume at no cost that $X$ is given by $y_1^2= y_0^2-f(x_0,\cdots, x_4)$, $g(x_0,\cdots, x_4)=0$ and $h(x_0,\cdots,x_4)=0$ where $f$, $g$, and $h$ have degree $4$, $2$, and $2$ respectively. First, we make a slight improvement on the previous results using techniques of cyclic covers, cone construction, degeneration and interpolation.

\begin{thm}\label{ALL}
Let $\Delta = \{f=0, g=h=0\}\subset \bP^4$ and $S=\{g=h=0\}\subset \bP^4$. Assume $\Delta$ is a smooth curve and $S$ is a smooth surface. Then $X$ is K-polystable.
\end{thm}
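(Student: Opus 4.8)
The plan is to reduce the K-polystability of $X$ to a stability-threshold estimate on a log Fano surface pair $(S,\tfrac{1}{16}\Delta)$ built from the del Pezzo base, and to settle that estimate by interpolation. Three structures on $X$ are exploited: $X$ is a cyclic (double) cover, its quotient is a cone, and both are compatible with the automorphism group $G=\CC^{*}\rtimes\ZZ/2$ of Theorem~\ref{theorem:IHES}\eqref{item:IHES_2}; degeneration and interpolation arguments then transfer the threshold from the base up to $X$. \emph{Step 1 (cyclic cover).} In the coordinates of the statement, $\sigma\in G$ is the involution $y_1\mapsto-y_1$, and the projection $[x_0:\cdots:x_4:y_0:y_1]\mapsto[x_0:\cdots:x_4:y_0]$ identifies $X/\sigma$ with $W\coloneqq\{g=h=0\}\subset\PP(1^5,2)$; thus $\psi\colon X\to W$ is a finite double cover branched along $B\coloneqq\{y_0^2-f=0\}\cap W$, and the two singular points of $X$ both lie over the vertex of $W$. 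One has $B\in|\cO_W(4)|$ and, since $-K_W=\cO_W(3)$ by adjunction, $(W,\tfrac{1}{2}B)$ is log Fano with $-(K_W+\tfrac{1}{2}B)=\cO_W(1)$ and $\psi^{*}(K_W+\tfrac{1}{2}B)=K_X$. By the behaviour of stability thresholds under cyclic covers, $\delta_{\langle\sigma\rangle}(X)=\delta(W,\tfrac{1}{2}B)$; and because the canonical destabilisation of a Fano variety that is not K-polystable is $\Aut$-invariant, hence $\sigma$-invariant, it suffices to prove $\delta(W,\tfrac{1}{2}B)>1$.

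\emph{Step 2 (cone structure; the coefficient $\tfrac{1}{16}$).} The variety $W$ is the projective cone $\overline{C}(S,-2K_S)$ over the smooth del Pezzo surface $S$ of degree $4$, with klt vertex the affine cone $C(S,-2K_S)$, section at infinity $S_\infty\cong S$, and $B\cap S_\infty=\Delta$; resolving the vertex yields the $\PP^1$-bundle $\widetilde{W}=\PP(\cO_S\oplus\cO_S(-2))\to S$, over which $B$ becomes a surface meeting each fibre in two points, with branch data over $S$ equal to $\Delta$. Applying the Abban--Zhuang method~\cite{AbbanZhuang} along the fibration $\widetilde{W}\to S$ — the $\CC^{*}$-action of the cone allowing one to degenerate the branch locus to the cone $\overline{C}(\Delta)$, which is the projective cone over $(S,\tfrac{1}{2}\Delta)$ polarised by $-2K_S$ — reduces $\delta(W,\tfrac{1}{2}B)$ to one-dimensional computations on the fibres $\PP^1$, all of which give explicit lower bounds exceeding $1$, together with a computation on the base in which $\Delta$ reappears with the renormalised coefficient $\tfrac{1}{16}$; concretely one obtains that $\delta(W,\tfrac{1}{2}B)$ is at least the minimum of $\delta(S,\tfrac{1}{16}\Delta)$ and finitely many explicit constants larger than $1$. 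This is the content of Proposition~\ref{prop:dP4}. The passage from $\tfrac{1}{2}$ to $\tfrac{1}{16}$ is the fibrewise bookkeeping of the degree-$2$ cover and the weight-$2$ cone variable, and reflects the fact that it is the log Fano pair $(S,c\Delta)$ with $c\in(0,\tfrac{1}{4})$, not the log-canonically polarised $(S,\tfrac{1}{2}\Delta)$, that governs the cone.

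\emph{Step 3 (interpolation on $S$).} It remains to prove $\delta(S,\tfrac{1}{16}\Delta)>1$ whenever $S$ is a smooth del Pezzo of degree $4$ and $\Delta\in|-4K_S|$ is smooth. Such an $S$ has finite automorphism group and is K-stable, so $\delta(S)>1$, and $(S,c\Delta)$ is log Fano for every $c\in(0,\tfrac{1}{4})$. Since $\Delta$ is smooth, interpolation from $\delta(S)>1$ together with the complete wall-crossing analysis of \S\ref{wall} for the family $(S,c\Delta)$, $c\in(0,\tfrac{1}{16}]$, shows that $\delta(S,c\Delta)>1$ throughout this range; in particular $\delta(S,\tfrac{1}{16}\Delta)>1$. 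Chaining Steps~1--3 gives $\delta(W,\tfrac{1}{2}B)=\delta_{\langle\sigma\rangle}(X)>1$, hence $X$ is K-polystable. Run in reverse, the same chain of comparisons underlies the identification of the K-moduli locus of pure $(2,2,4)$-complete intersections with the K-moduli of pairs $(S,\tfrac{1}{16}\Delta)$ in Corollary~\ref{cor:moduli}.

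\emph{Main obstacle.} The computational core is Step~2: carrying out the Abban--Zhuang computation on the cone $(W,\tfrac{1}{2}B)$ — choosing the flag, evaluating the fibrewise refinements, and extracting the precise residual coefficient $\tfrac{1}{16}$ — and Step~3, where one must bound the thresholds $\delta(S,c\Delta)$ and locate all walls up to $\tfrac{1}{16}$. The more conceptual point to get right is the cyclic-cover comparison $\delta_{\langle\sigma\rangle}(X)=\delta(W,\tfrac{1}{2}B)$ and its upgrade from K-semistability to K-\emph{poly}stability, in the presence of the klt vertex of $W$ and the extra $\CC^{*}$ sitting inside $\Aut(X)$.
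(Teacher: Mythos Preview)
Your Step~1 (the double cover $X\to (Y,\tfrac12 D)$ with $Y=\{g=h=0\}\subset\PP(1^5,2)$ and $D=\{y_0^2=f\}|_Y$) is exactly how the paper begins. After that the paper takes a much shorter route than yours: it never reduces to the surface pair $(S,\tfrac{1}{16}\Delta)$. Instead it stays on the threefold $Y$. Using the $\CC^*$-action of the cone it degenerates $D$ to $D_0=2S_\infty$; the cone construction over the K-stable surface $S$ gives $(Y,\tfrac{5}{6}S_\infty)=(Y,\tfrac{5}{12}D_0)$ K-polystable; openness then yields $(Y,\tfrac{5}{12}D)$ K-semistable. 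Since $D$ is smooth and avoids the vertex, $(Y,\tfrac{3}{4}D)$ is klt log Calabi--Yau, and interpolation between $\tfrac{5}{12}$ and $\tfrac{3}{4}$ gives K-stability of $(Y,\tfrac12 D)$, hence K-polystability of $X$ by the cyclic-cover criterion. No Abban--Zhuang computation, no coefficient $\tfrac{1}{16}$, and no wall-crossing are needed.

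Your detour through $(S,\tfrac{1}{16}\Delta)$ can be made to work, but as written Steps~2--3 are muddled. In Step~2 you cite Proposition~\ref{prop:dP4}, but that proposition bounds the singularities of surfaces in $\fM_c$; the implication ``$(S,\tfrac{1}{16}\Delta)$ K-polystable $\Rightarrow$ $(Y,\tfrac12 D)$ K-polystable'' is Proposition~\ref{prop:surface-3fold}, and it is \emph{not} proved by an Abban--Zhuang flag on the cone: it uses a \emph{second} double cover $Y\to Z=C_p(S,-4K_S)$, the cone construction on $Z$, and convexity of K-semistability in the coefficients. The $\delta$-inequality you assert (with explicit fibre constants exceeding $1$ and residual coefficient $\tfrac{1}{16}$) is neither stated nor proved in the paper. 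In Step~3, invoking the wall-crossing of \S\ref{wall} is both unnecessary and logically awkward (those results are established downstream of Theorem~\ref{ALL}); the straightforward fix is to interpolate directly on the surface: $S$ is K-stable and $(S,\tfrac14\Delta)$ is klt log Calabi--Yau, so $(S,c\Delta)$ is K-stable for all $c\in(0,\tfrac14)$, in particular at $c=\tfrac{1}{16}$. With these corrections your route is valid but longer, since it requires Proposition~\ref{prop:surface-3fold} as an input; the paper's interpolation on $Y$ avoids that entirely.
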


\begin{proof}
By forgetting $y_1$ in the defining equation of $X$ we obtain a double cover $X\to (Y, \frac{1}{2}D)$ where $Y=\{g=h=0\}\subset \PP(1^5, 2)$ and $D =\{y_0^2=f\} |_Y$. In other words, $Y\cong C_p(S, \cO_S(2))$. By rescaling the $y_0$ coordinate only, it is easy to see that $(Y,D)$ admits an isotrivial degeneration to $(Y, D_0)$ where $D_0=\{y_0^2=0\}|_Y$ is twice of the section  at infinity, which we denote by $S_\infty = \frac{1}{2}D_0$. Since $S$ is a  smooth del Pezzo surface of degree $4$, we know that $S$ is K-stable. Hence by the cone construction \cite{LL16, LX20, ZhangZhou} we know that $(Y, (1-\frac{r}{3})S_\infty)$ is K-polystable for $\cO_S(2)=-r^{-1}K_{S}$, i.e. $r=\frac{1}{2}$. Hence $(Y, \frac{5}{12}D_0)$ is K-polystable which implies that $(Y, \frac{5}{12}D)$ is K-semistable by openness. Since $D\sim \cO_Y(4)$ and $-K_Y \sim \cO_Y(3)$, we know that $(Y, \frac{3}{4}D)$ is a klt log Calabi-Yau pair as $D$ is smooth and contained in the smooth locus of $Y$. Therefore by interpolation of K-stability \cite[Proposition 2.13]{wall_crossing_K_moduli} we know that $(Y, cD)$ is K-stable for every $c\in (\frac{5}{12},\frac{3}{4})$. In particular, taking $c=\frac{1}{2}$ yields K-stability of $(Y, \frac{1}{2}D)$ which implies K-polystability of $X$ by the cyclic cover result on K-polystability \cite{dervan_covers, LZ22, Zhuang}.
\end{proof}

Next we study the K-moduli compactification of pure $(2,2,4)$-complete intersections in $\PP(1^5,2^2)$. The following result is inspired by \cite[Theorem 5.2]{adl_quartic_K3}.

\begin{prop}\label{prop:surface-3fold}
Let $S$ be a log del Pezzo surface. Let  $\Delta \sim -4K_S$ be an effective $\bQ$-Cartier Weil divisor given by $\Delta = \{f=0\}$ for $f\in H^0(S, -4K_S)$. Let $Y=C_p(S, -2K_S)$ be the projective cone. Let $D= \{y^2 = f\}$ be a divisor in $Y$, where $\{y=0\}$ is the section at infinity of $Y$. Let $c\in (0,\frac{1}{4})\cap \bQ$. If $(S, c\Delta)$ is K-semistable (resp. K-polystable), then $(Y, \frac{5+16c}{12} D)$ is also K-semistable (resp. K-polystable).
\end{prop}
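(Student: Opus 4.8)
The plan is to mirror the proof of Theorem~\ref{ALL}, keeping the coefficient $c$ as a free parameter and replacing the input ``$S$ is K-stable'' by the hypothesis on $(S,c\Delta)$. The engine is the cone construction of \cite{LL16,LX20,ZhangZhou} in its version for log Fano pairs, supplemented by the interpolation result \cite[Proposition~2.13]{wall_crossing_K_moduli} and openness of K-semistability.

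First I would set up the numerical bookkeeping. Since $\Delta\sim -4K_S$, the pair $(S,c\Delta)$ is a klt log Fano pair for $c\in(0,\tfrac14)$ with
\[
-(K_S+c\Delta)\sim_\QQ (1-4c)(-K_S)=r\cdot(-2K_S),\qquad r:=\tfrac{1-4c}{2}\in\bigl(0,\tfrac12\bigr).
\]
On $Y=C_p(S,-2K_S)$ one has $-K_Y\sim_\QQ\tfrac32\xi$, where $\xi$ is the natural polarization of the cone with $\xi|_{S_\infty}=-2K_S$; hence $S_\infty\sim_\QQ\tfrac23(-K_Y)$ and $D\sim_\QQ C_\Delta\sim_\QQ\tfrac43(-K_Y)$, where $C_\Delta$ is the cone over $\Delta$. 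The pair version of the cone construction then gives that $(S,c\Delta)$ is K-semistable (resp.\ K-polystable) if and only if
\[
\Bigl(Y,\ cC_\Delta+\bigl(1-\tfrac r3\bigr)S_\infty\Bigr)=\Bigl(Y,\ cC_\Delta+\tfrac{5+4c}{6}S_\infty\Bigr)
\]
is, and a short computation shows
\[
cC_\Delta+\tfrac{5+4c}{6}S_\infty\ \sim_\QQ\ \Bigl(\tfrac{4c}{3}+\tfrac{5+4c}{9}\Bigr)(-K_Y)=\tfrac{5+16c}{9}(-K_Y)\ \sim_\QQ\ \tfrac{5+16c}{12}D .
\]
Thus the cone pair and $\bigl(Y,\tfrac{5+16c}{12}D\bigr)$ have $\QQ$-linearly equivalent boundaries.

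The remaining, and most delicate, step is to transfer K-(semi/poly)stability from the cone pair to $\bigl(Y,\tfrac{5+16c}{12}D\bigr)$, whose boundary has the same class but a different, irreducible, support, so one cannot simply quote openness of K-semistability along a product test configuration as in Theorem~\ref{ALL}. Here I would exploit the $\GG_m$-action on $Y$ rescaling the cone variable: it degenerates $D=\{y^2-f=0\}$ to $C_\Delta=\{f=0\}$ and to $2S_\infty=\{y^2=0\}$, all of which sit in the pencil $\{sy^2-tf=0\}$. Combining this with the fact that $\tfrac34 D\sim_\QQ -K_Y$ and that $\bigl(Y,\tfrac34 D\bigr)$ is a klt log Calabi--Yau pair -- which follows from $(S,\tfrac14\Delta)$ being klt log Calabi--Yau via the same cone computation -- one can run interpolation \cite[Proposition~2.13]{wall_crossing_K_moduli} to pin down the coefficient $\tfrac{5+16c}{12}$ and conclude K-semistability of $\bigl(Y,\tfrac{5+16c}{12}D\bigr)$ from that of the cone pair. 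For the K-polystable statement one upgrades the argument using uniqueness of K-polystable degenerations \cite{blum_xu_uniqueness,lwx} together with the $\GG_m$-equivariance of all the constructions involved: when $(S,c\Delta)$ is K-polystable the cone pair is K-polystable, and one checks that this forces $\bigl(Y,\tfrac{5+16c}{12}D\bigr)$ to coincide with its own K-polystable degeneration.

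I expect the main obstacle to be precisely this transfer step: proving that K-semistability (and then K-polystability) of $\bigl(Y,cC_\Delta+\tfrac{5+4c}{6}S_\infty\bigr)$ forces the same for $\bigl(Y,\tfrac{5+16c}{12}D\bigr)$, since the two boundaries are numerically but not effectively comparable. I would attack it by comparing Fujita--Li $\beta$-invariants on $Y$ for the two boundary configurations, using that the only potentially destabilizing directions are the valuations induced from $S$ through the cone, $\ord_{S_\infty}$, and the cone $\GG_m$-orbit -- an analysis in the spirit of, but more involved than, \cite[Theorem~5.2]{adl_quartic_K3}.
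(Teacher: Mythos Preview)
Your numerical bookkeeping on $Y$ is correct, and you have pinpointed the genuine difficulty: passing from the cone pair $\bigl(Y,\,cC_\Delta+\tfrac{5+4c}{6}S_\infty\bigr)$ to $\bigl(Y,\tfrac{5+16c}{12}D\bigr)$. However, the tools you propose for this transfer do not close the gap. Interpolation \cite[Proposition~2.13]{wall_crossing_K_moduli} requires a K-semistable pair of the form $(Y,c'D)$ for some $c'<\tfrac{5+16c}{12}$, which you do not have; the $\GG_m$-degenerations of $D$ land on $C_\Delta$ or on $2S_\infty$, neither of which is the reducible boundary $cC_\Delta+\tfrac{5+4c}{6}S_\infty$ whose K-semistability you know. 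A direct $\beta$-invariant comparison also fails: the $S$-invariants agree (same numerical class), but the log discrepancies do not compare in the needed direction --- for instance $\ord_D$ sees $D$ but not $C_\Delta$ or $S_\infty$, so the inequality $a\cdot v(D)\le c\cdot v(C_\Delta)+\tfrac{5+4c}{6}\,v(S_\infty)$ is false already there. Finally, your appeal to $(Y,\tfrac34 D)$ being klt log Calabi--Yau is not available in the generality of the proposition: only $(S,c\Delta)$ klt is assumed, and $D$ can be singular.

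The paper's proof, following \cite[Theorem~5.2]{adl_quartic_K3}, avoids this obstruction by \emph{changing the cone}. The involution $y\mapsto -y$ on $Y$ fixes $D$, and the quotient is a double cover $\pi\colon Y\to Z:=C_p(S,-4K_S)$ branched along the section at infinity $S_1$. The image $S_2:=\pi(D)$ is now itself a \emph{section} of $Z$, and $\pi\colon(Y,aD)\to(Z,\tfrac12 S_1+aS_2)$ is crepant Galois; by \cite{LZ22,Zhuang} it suffices to prove K-semistability of the latter. On $Z$ one applies the cone construction with $-4K_S$ (so $r=\tfrac{1-4c}{4}$) to get, after openness, that $\bigl(Z,\tfrac{11+4c}{12}S_1+cS_2\bigr)$ is K-semistable. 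The crucial point is that on $Z$ the two sections $S_1$, $S_2$ are interchangeable by the automorphism $z\mapsto f-z$, so $\bigl(Z,\,cS_1+\tfrac{11+4c}{12}S_2\bigr)$ is K-semistable as well. The target coefficients $(\tfrac12,a)=(\tfrac12,\tfrac{5+16c}{12})$ are the midpoint of $(\tfrac{11+4c}{12},c)$ and $(c,\tfrac{11+4c}{12})$, and convexity of K-semistability finishes the argument. This $S_1\!\leftrightarrow\!S_2$ symmetry is exactly what is unavailable on $Y$, where $D$ is a double section rather than a section, and it is the idea your proposal is missing.
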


\begin{proof}
The proof is similar to \cite[Proof of Theorem 5.2]{adl_quartic_K3}. Denote by $a:=\frac{5+16c}{12}$ and $b:=\frac{11+4c}{12}$.
Denote by $\tau: Y\to Y$ the involution preserving each ruling given by $\tau^* y = -y$. Let $Z=C_p(S, -4K_S)$ be a new projective cone where $S_1= \{z=0\}$ is the section at infinity. Then the quotient map of $\tau$ yields a double cover $\pi: Y\to Z$ branched along $S_1$. Moreover, it is clear that $D$ is $\tau$-invariant whose quotient $S_2=D/\tau$ is a section of $Z$ such that $S_2|_{S_1} = \Delta$. Thus we have a crepant finite Galois morphism $\pi: (Y, aD)\to (Z, \frac{1}{2}S_1 + aS_2)$. By \cite{LZ22, Zhuang}, it suffices to show K-semistability (resp. K-polystability) for $(Z, \frac{1}{2}S_1 + aS_2)$.

We first treat K-semistability.
The natural $\bG_m$-action on $(Z, S_1)$ degenerates $S_2$ to $S_{2,0}$ as the cone over $\Delta$. Let $r\in \bQ_{>0}$ be chosen such that $-4K_S \sim_{\bQ} r^{-1} (-K_S - c\Delta)$, i.e.\ $r=\frac{1-4c}{4}$. It follows from the cone construction \cite[Proposition 5.3]{LX20} (see also \cite{LL16}) that $(Z, (1-\frac{r}{3}) S_1 + cS_{2,0})$ is K-semistable. Since $1-\frac{r}{3}=\frac{11+4c}{12}=b$, we know that $(Z, b S_1 + c S_2)$ is K-semistable by openness of K-semistability \cite{BLX, xu_minimizing}. By symmetry between $S_1$ and $S_2$, we know that $(Z, c S_1+b S_2)$ is also K-semistable. Since $(\frac{1}{2}, a)$ is a convex combination of $(c,b)$ and $(b, c)$, we know that $(Z, \frac{1}{2}S_1 + aS_2)$ is K-semistable.

The K-polystable part follows from similar argument to  \cite[Proof of Theorem 5.2]{adl_quartic_K3}. We include a proof here for readers' convenience. By \cite{LZ22, Zhuang} it suffices to show K-polystability of $(Z, \frac{1}{2}S_1+aS_2)$. By \cite{lwx}, there exists a special test configuration $(\cZ, \frac{1}{2}\cS_1 + a \cS_2)$ of $(Z, \frac{1}{2}S_1+aS_2)$ whose central fiber $(Z', \frac{1}{2}S_1' + a S_2')$ is K-polystable. Hence $\Fut(\cZ, \frac{1}{2}\cS_1 + a \cS_2) = 0$. By linearity of Futaki invariants and K-semistability of $(Z, bS_1+cS_2)$ and $(Z, cS_1+bS_2)$, we know that
\[
\Fut(\cZ,b\cS_1 + c \cS_2) = \Fut(\cZ,c\cS_1 +b \cS_2) = 0.
\]
By \cite[Lemma 3.1]{lwx} we know that both $(Z', bS_1' + cS_2')$ and $(Z', cS_1' + bS_2')$ are K-semistable.
Since $(S, c\Delta)$ is K-polystable, we know that $(Z, bS_1 + cS_{2,0})$ is the K-polystable degeneration of $(Z, bS_1 + cS_2)$ by \cite{LL16, LX20, projectivity_K_moduli_final}. Thus by \cite{lwx} we have a sequence of special degenerations of K-semistable log Fano pairs
\[
(Z, b S_1 + c S_2)\rightsquigarrow (Z', bS_1' + cS_2') \rightsquigarrow (Z, bS_1 + cS_{2,0}).
\]
This implies that $(Z, S_1) \cong (Z', S_1')$. By symmetry between $S_1$ and $S_2$ we also have $(Z, S_2) \cong (Z', S_2')$. Moreover, restricting the above degeneration sequence to $S_1$ yields that $(S_1, S_2|_{S_1})\cong (S_1', S_2'|_{S_1'})$. Since $S_1$ and $S_2$ are two sections of the projective cone $Z$, we conclude that $(Z, S_1+S_2)\cong (Z, S_1+ S_2)$ which implies that $(Z, \frac{1}{2}S_1+aS_2)$ is K-polystable.
\end{proof}

From Proposition \ref{prop:surface-3fold}, we see that if $c=\frac{1}{16}$ then $\frac{5+16c}{12}=\frac{1}{2}$ which precisely gives K-moduli of the double cover $X$ of $(Y, \frac{1}{2}D)$. In other words, let $\fM_{c}$ be the K-moduli space of $(S, c\Delta)$ where $S$ is a  $\bQ$-Gorenstein smoothable del Pezzo surface of degree $4$ and $\Delta \sim -4K_S$.
In the convention of \cite{wall_crossing_K_moduli}, the K-moduli space $\fM_c := KM_{\chi_0, 4, c }$ where $\chi_0$ is the Hilbert polynomial of an anti-canonically polarized smooth del Pezzo surface of degree $4$. From the following result, we see that there exists a finite injective morphism $\psi: \fM_{\frac{1}{16}}\to M_{3,4}^{\rm Kps}$ whose image is precisely the closure of the locus parametrising pure $(2,2,4)$-complete intersections $X_{2,2,4}\!\subset\!\PP(1^5, 2^2)$.
Thus to describe the compactification of the K-moduli of pure $(2,2,4)$-complete intersections in $\PP(1^5, 2^2)$, it suffices to describe the  K-moduli space $\fM_{\frac{1}{16}}$ of $(S,\frac{1}{16}\Delta)$ consisting of a del Pezzo surface $S$ of degree $4$ and $\Delta\sim -4K_S$. As a consequence, the converse of Proposition \ref{prop:surface-3fold} also holds.

\begin{thm}\label{thm:finite-morphism}
There exists a finite injective morphism $\psi: \fM_{\frac{1}{16}}\to M_{3,4}^{\rm Kps}$ whose image is the closure of the locus parameterising pure  $(2,2,4)$-complete intersections $X_{2,2,4}\!\subset\!\PP(1^5, 2^2)$.
\end{thm}

\begin{proof}
Let $\cM_c:=\cK\cM_{\chi_0, 4,c}$ be the K-moduli stack defined in \cite{wall_crossing_K_moduli} whose good moduli space is $\fM_c$. We fix $c = \frac{1}{16}$ throughout the proof. Let $\pi:(\cS, c\Delta_{\cS}) \to \cM_c$ be the universal family of K-semistable pairs. We will construct a family of K-semistable $\bQ$-Fano \threefolds{} of volume $4$ over an algebraic stack $\sW$ which is a composition of two $\bmu_2$-gerbes over $\cM_c$. This gives us a morphism of stacks $\Psi: \sW \to \cM_{3,4}^{\rm Kss}$ where the target denotes the K-moduli stack of K-semistable $\bQ$-Fano \threefolds{} of volume $4$ (see e.g. \cite{xu_survey}). Then $\psi$ is obtained by descent to good moduli spaces. The construction of such a family has three steps.

Firstly, we construct the projective cone $\cZ\to \cM_c$. Let $\cL:=\cO_{\cS}(\Delta_{\cS})$. Then we know that $\cL$ is a line bundle by Proposition \ref{prop:dP4}.  Let $\cZ:=\Proj_{\cM_c} \cR[t]$ where $\cR = \oplus_{m=0}^\infty \pi_* \cL^{\otimes m}$. Here $\pi_* \cL^{\otimes m}$ and $t$ have degree $m$ and $1$, respectively. Let $\cS_1 := \{t = 0\}$ and $\cS_2: = \{t- s = 0\}$ be two Cartier divisors in $\cZ$, where $s\in H^0(\cM_c, \pi_*\cL)$ is a section defining $\Delta_{\cS}$.

Secondly, we construct a double cover $\cY$ of $\cZ$ branched along $\cS_1$ after base change to a $\bmu_2$-gerbe. Our argument is similar to \cite[Proof of Proposition 6.12]{adl_quartic_K3}. Let $\cG:=K_{\cZ/\cM_c}+\cS_1$. Then we know that $\cG$ is a $\bQ$-Cartier Weil divisor such that $-4\cG_t \sim (\cS_1)_t$ for every $t\in |\cM_c|$. Hence $\cO_{\cZ}(4\cG + \cS_1)$ is a trivial line bundle along each fiber $\cZ_t$, which implies that it descends to a line bundle $\cF$ over $\cM_c$. Let $\phi_{\sV}:\sV\to \cM_c$ be the $\bmu_2$-gerbe obtained as the second root stack of $\cF$. Hence there exists a line bundle $\cF'_{\sV}$ on $\sV$ such that ${\cF'_{\sV}}^{\otimes 2} \cong \phi_{\sV}^* \cF$. Let $\pi_{\sV}:(\tcZ, \tcS_1+ \tcS_2)\to \sV$ be the base change of $(\cZ, \cS_1 +\cS_2)\to \cM_c$ to $\sV$. Denote by $\tcG$ the pullback of $\cG$ to $\tcZ$. Then $\cN:= \pi_{\sV}^* \cF'_{\sV} \otimes \cO_{\tcZ}(-2\tcG)$ is a $\bQ$-Cartier divisorial sheaf  such that $\cN^{[2]} \cong \cO_{\tcZ}(\tcS_1)$. We take the double cover of $\tcZ$ branched along $\tcS_1$
\[
\cY: = \Spec_{\tcZ} \cO_{\tcZ} \oplus \cN^{[-1]},
\]
where the $\cO_{\tcZ}$-algebra structure is induced by $\cN^{[-2]}\xrightarrow{\cdot\tilde{s}} \cO_{\tcZ}$ where $\tilde{s}$ is a section of $\cN^{[2]}$ such that $\{\tilde{s}=0 \}= \tcS_1$. Let $\cD$ be the pullback of $\tcS_2$ to $\cY$.

Lastly, we construct a double cover $\mathcal{X}$ of $\cY$ branched along $\cD$ after base change to a $\bmu_2$-gerbe. The construction is similar to the previous step, so we omit the details. In the end, we obtain a $\bmu_2$-gerbe $\sW \to \sV$ such that we can take a double cover $\mathcal{X}$ of $\tcY$ branched along $\tcD$ where $(\tcY, \tcD) = (\cY, \cD) \times_{\sV} \sW$. By Kawamata-Viehweg vanishing, the above construction commutes with base change and hence the family $\mathcal{X} \to \sW$ has K-semistable fibers by Proposition \ref{prop:surface-3fold}  where a general fiber is a pure $(2,2,4)$-complete intersection in $\bP(1^5, 2^2)$. Thus we obtain a morphism $\Psi: \sW \to \cM_{3,4}^{\rm Kss}$. Moreover, $\sW$ admits a good moduli space isomorphic to $\fM_c$ by \cite{alper} (see also \cite[Proof of Proposition 6.12]{adl_quartic_K3}). Hence $\Psi$ descends to a morphism between good moduli spaces $\psi: \fM_{\frac{1}{16}} \to M_{3,4}^{\rm Kps}$ by \cite[Theorem 6.6]{alper}. It remains to show that $\psi$ is a finite injective morphism. Since $\fM_{\frac{1}{16}}$ is proper, it suffices to show that $\psi$ is injective on $\bC$-points.

Suppose $\psi(S, \Delta)\cong \psi(S', \Delta')$. To show $\psi$ is injective, it suffices to show $(S,\Delta)\cong (S', \Delta')$. By Proposition \ref{prop:dP4}, we know that both $S$ and $S'$ are del Pezzo surfaces of degree $4$ with du Val singularities. Thus $S$ and $S'$ are both $(2,2)$-complete intersections in $\bP^4$, and we may write
\[
(S,\Delta) = (\{g= h= 0\}, \{f=0\}|_S) \quad\textrm{and} \quad (S',\Delta') = (\{g'=h'=0\}, \{f'=0\}|_{S'}),
\]
where $g,h,g',h'$ (resp. $f,f'$) are homogeneous polynomials of degree $2$ (resp. of degree $4$) in $(x_0, x_1, x_2, x_3, x_4)$. Let $X = \psi(S,\Delta)$ and $X'=\psi(S',\Delta')$. Then we have $X\cong X'$ by assumptions. Moreover, we  know that $X = \{g=h = y_0y_1 -f=0\}$ and $X' =  \{g'=h'=y_0y_1 -f'=0\}$ are pure $(2,2,4)$-complete intersections in $\bP(1^5, 2^2)$. Since the embedding $X\hookrightarrow \bP(1^5,2^2)$ is induced by the  linear systems $|-K_X|$ and $|-2K_X|$: the coordinates $(x_0, \cdots, x_4)$ form a basis of $H^0(X,-K_X)$, and the coordinates $(y_0, y_1)$ are lifting of a basis of $\mathrm{coker}(\mathrm{Sym}^2 H^0(X,-K_X) \to H^0(X, -2K_X))$, we know that $X$ and $X'$ are projectively equivalent as weighted complete intersections. Thus under a change of coordinates $\sigma\in \Aut(\bP(1^5, 2^2))$ we have $X' = \sigma^* X$. We may decompose $\sigma = \sigma_x \circ \sigma_y$ where $\sigma_x$ is linear in $x_i$ and fixes $y_0$ and $y_1$, and $\sigma_y$  fixes each $x_i$. Then a simple analysis of the transformation of equations under $\sigma^*$ shows that $(S',\Delta')=\sigma_x^* (S,\Delta)$ where we treat $\sigma_x$ as an element in $\mathrm{PGL}_5(\bC)$. Thus the proof is finished.
\end{proof}

The next result bounds the singularities of surfaces appearing in $\fM_c$.

\begin{prop}\label{prop:dP4}
Let $[(S,c\Delta)]\in \fM_{c}$ be a K-polystable pair for some $c\in (0, \frac{1}{16}]$. Then either $S$ is smooth, or $S$ has only $A_1$, $A_2$, or $A_3$-singularities. If in addition $S$ has an $A_3$-singularity, then we have
\[
c=\frac{1}{16},~ S \cong \{y_3^2 = y_2 y_4\}\subset \bP(1,2,3,4)_{[y_1,y_2,y_3,y_4]}, ~\textrm{and} ~\Delta = \{y_4^4 =0\}.
\]
Note that this surface $S$ is \textnumero 25 from \cite[Big Table]{CP20} (see also Table \ref{table:dP4-Gm}).
\end{prop}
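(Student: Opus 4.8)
The plan is to bound the singularities of $S$ by combining lower bounds for the normalized volumes of its singular germs (coming from K-semistability of the pair) with the classification of $\bQ$-Gorenstein smoothable surface singularities, using crucially that the boundary coefficient $c$ is small. Since $[(S,c\Delta)]\in\fM_c$, the surface $S$ is a $\bQ$-Gorenstein smoothing of a smooth del Pezzo surface of degree $4$; in particular $(-K_S)^2=4$, and each singular point $p\in S$ is either a Du Val point (of type $A_k$, $D_k$ or $E_k$) or a cyclic quotient singularity of class $T$, that is of the form $\frac{1}{dn^2}(1,dna-1)$ with $n\geq 2$, $d\geq 1$, $\gcd(a,n)=1$. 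In every case the germ $(p\in S)$ is a quotient $\CC^2/G$ with $G$ acting freely in codimension one, so by multiplicativity of the normalized volume under quasi-\'etale covers \cite{xu_zhuang} one gets $\hvol(p,S)=\hvol(0,\CC^2)/|G|=4/|G|$. Hence $\hvol(p,S)\geq 1$ precisely when $|G|\leq 4$, and running through the list this occurs only for the types $A_1$ ($\hvol=2$), $A_2$ ($\hvol=\tfrac43$), $A_3$ ($\hvol=1$) and $\tfrac14(1,1)$ ($\hvol=1$); every other $T$-singularity or Du Val singularity has $\hvol(p,S)<1$.

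Next I would invoke the Fujita--Liu volume inequality for the K-semistable log Fano pair $(S,c\Delta)$; note $-(K_S+c\Delta)\sim_\bQ(1-4c)(-K_S)$ is ample because $c\leq\tfrac1{16}<\tfrac14$. It gives, for every closed point $p\in S$,
\[
\hvol(p,S,c\Delta)\;\geq\;\tfrac49\bigl(-(K_S+c\Delta)\bigr)^2\;=\;\tfrac49(1-4c)^2(-K_S)^2\;=\;\tfrac{16}{9}(1-4c)^2\;\geq\;1,
\]
where the last inequality uses $0<c\leq\tfrac1{16}$, hence $(1-4c)^2\geq\tfrac9{16}$. On the other hand $c\Delta$ is effective, so $A_{(S,c\Delta)}(v)\leq A_{(S,0)}(v)$ for every valuation $v$ over $S$, whence $\hvol(p,S,c\Delta)\leq\hvol(p,S)$, with strict inequality if $p\in\Supp\Delta$ (the minimizer of $\hvol(p,S,c\Delta)$ has center $\{p\}$ and therefore positive value on the locally principal divisor $\Delta$). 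Combining with the first paragraph, every singular point $p$ of $S$ satisfies $\hvol(p,S)\geq 1$, hence is of type $A_1$, $A_2$, $A_3$ or $\tfrac14(1,1)$. Thus the first assertion follows once the type $\tfrac14(1,1)$ is ruled out.

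To finish, suppose some $p\in S$ is of type $\tfrac14(1,1)$ or of type $A_3$. Then $\hvol(p,S)=1$, so the chain
\[
1=\hvol(p,S)\;\geq\;\hvol(p,S,c\Delta)\;\geq\;\tfrac{16}{9}(1-4c)^2\;\geq\;1
\]
consists of equalities. This forces $c=\tfrac1{16}$, forces $p\notin\Supp\Delta$, and forces equality in the Fujita--Liu inequality at $p$. By the equality case of that inequality, $(S,\tfrac1{16}\Delta)$ admits a $\bG_m$-equivariant special degeneration to the projective cone $(S_0,\tfrac1{16}\Delta_0)$ over the Kollár component associated to $p$, and this cone is itself K-semistable; since $(S,\tfrac1{16}\Delta)$ is K-polystable and degenerates to a K-semistable fiber, the degeneration is a product, so $(S,\tfrac1{16}\Delta)\cong(S_0,\tfrac1{16}\Delta_0)$. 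If $p$ is of type $\tfrac14(1,1)$, the relevant cone is $\PP(1,1,4)$, the projective cone over the rational normal quartic curve, which has anticanonical degree $9\neq 4$; this contradicts $(-K_{S_0})^2=(-K_S)^2=4$, so no point of $S$ is of type $\tfrac14(1,1)$ and the first assertion is proved. If $p$ is of type $A_3$, the Kollár component is $(\PP^1,\tfrac12 q_1+\tfrac12 q_2)$; identifying the cone $S_0$ and using that $\Delta_0$ must be the unique $\bG_m$-invariant member of $|-4K_{S_0}|$ not passing through the vertex, one obtains $S\cong\{y_3^2=y_2y_4\}\subset\PP(1,2,3,4)$ and $\Delta=\{y_4^4=0\}$, namely entry $25$ in the Big Table of \cite{CP20}.

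I expect the last paragraph to be the delicate part: making the equality statement for the Fujita--Liu inequality precise enough to extract a $\bG_m$-equivariant special degeneration with K-semistable central fiber, and then identifying the resulting cone and the limiting boundary $\Delta_0$ explicitly (in particular, checking that the only $\bG_m$-invariant quartic avoiding the vertex of $\{y_3^2=y_2y_4\}$ is $\{y_4^4=0\}$). A fallback, should the abstract equality case be awkward to apply, is to list from the Big Table of \cite{CP20} the finitely many $\bQ$-Gorenstein smoothable log del Pezzo surfaces of degree $4$ carrying an $A_3$ or a $\tfrac14(1,1)$ point and to run them through the wall-crossing analysis of \S\ref{wall}, reading off directly for which $c\in(0,\tfrac1{16}]$ and which $\Delta\sim -4K_S$ the pair $(S,c\Delta)$ is K-polystable.
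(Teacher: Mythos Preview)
Your approach is essentially the paper's: both use the chain
\[
\frac{4}{|G_x|}=\hvol(x,S)\geq\hvol(x,S,c\Delta)\geq\tfrac{4}{9}(-K_S-c\Delta)^2=\tfrac{16}{9}(1-4c)^2\geq 1,
\]
deduce $|G_x|\leq 4$, list the four possible $T$-singularities, and analyse the equality case to exclude $\tfrac14(1,1)$ and pin down the $A_3$ pair. The only substantive difference is in how the equality case is cashed out.

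The paper invokes \cite[Remark~38(1)]{Liu18}: equality forces $(S,\tfrac{1}{16}\Delta)\cong(\PP^2/\bmu_4,\text{orbifold divisor})$, with the $\bmu_4$-weights determined by the local type of $x$. For weight $(1,1,0)$ this gives $\PP(1,1,4)$ (degree $9$, contradiction); for weight $(1,3,0)$ one computes the invariant ring to get $S\cong\{y_3^2=y_2y_4\}\subset\PP(1,2,3,4)$, and $\tfrac{1}{16}\Delta$ is forced to be the orbifold divisor $\tfrac12\cdot\tfrac12\{y_4=0\}$, i.e.\ $\Delta=\{y_4^4=0\}$. Your route via ``special degeneration to the cone over the Koll\'ar component, hence isomorphism by K-polystability'' is an equivalent packaging of the same rigidity statement, and your exclusion of $\tfrac14(1,1)$ via $(-K_{\PP(1,1,4)})^2=9$ matches the paper exactly.

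Where your write-up needs tightening is the identification of $\Delta$ in the $A_3$ case. The assertion that $\Delta_0$ is ``the unique $\bG_m$-invariant member of $|{-4K_{S_0}}|$ not passing through the vertex'' is not correct as stated: on $\{y_3^2=y_2y_4\}$ there are many torus-invariant divisors in $|\cO_S(16)|$ avoiding the $A_3$ point (e.g.\ $\{y_1^{16}=0\}$). What actually pins down $\Delta$ is precisely the content of the equality case you are invoking: $\tfrac{1}{16}\Delta$ must be the \emph{orbifold boundary} of the quotient $\PP^2/\bmu_4$, not merely some invariant divisor. Once you phrase it that way, the computation is the one the paper carries out. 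Your acknowledged ``delicate part'' is thus not a genuine obstacle --- just cite the equality characterisation from \cite{Liu18} (or \cite{LL16}) directly rather than trying to recover $\Delta$ from invariance alone.
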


\begin{proof}
We know that
\[
(-K_S-c\Delta)^2 = (1-4c)^2 (-K_S)^2 = 4(1-4c)^2.
\]
Let $x\in S$ be a singular point with local orbifold group $G_x$.
The local-global volume comparison \cite{LL16} implies
\begin{equation}\label{eq:vol-compare}
\frac{4}{|G_x|}=\hvol(x, S) \geq \hvol(x, S, c\Delta) \geq  \frac{4}{9}(-K_S-c\Delta)^2 =\frac{16}{9}(1-4c)^2 \geq 1.
\end{equation}
Thus we have that $|G_x|\leq 4$. This, together with the $T$-singularity condition (see \cite{kollar_shepherd_barron}), implies that $x\in S$ is $A_1$, $A_2$, $A_3$ or of type $\frac{1}{4}(1,1)$.

Next, we rule out the case of type $\frac{1}{4}(1,1)$. Assume to the contrary that $x\in S$ has type $\frac{1}{4}(1,1)$. Since $|G_x|=4$, every inequality from \eqref{eq:vol-compare} is an equality. Thus we have $x\not\in \Supp(\Delta)$, $c=\frac{1}{16}$, and the inequality in the local-global volume comparison from \cite{Liu18, LL16} is an equality.  By \cite[Remark 38(1)]{Liu18}, we have that $S\cong \bP^2/\bmu_4$ where the $\bmu_4$-action on $\bP^2$ has weight $(1,1,0)$, and $\frac{1}{16}\Delta$ corresponds to the orbifold divisor of $\bP^2/\bmu_4$. In particular, we have $S\cong \bP(1,1,4)$ which implies that $(K_S^2) = 9\neq 4$, a contradiction. Thus singularities of type $\frac{1}{4}(1,1)$ cannot appear.

Finally, we consider the case where $x\in S$ has type $A_3$. Similar to the previous case of type $\frac{1}{4}(1,1)$, we have $x\not\in \Supp(\Delta)$, $c=\frac{1}{16}$. Moreover, $S\cong \bP^2/\bmu_4$ where the $\bmu_4$-action on $\bP^2$ has weight $(1,3,0)$, and $\frac{1}{16}\Delta$ corresponds to the orbifold divisor of $\bP^2/\bmu_4$.  Suppose a generator $g$ of $\bmu_4$ acting on $\bP^2_{[x,y,z]}$ as
\[
g\cdot [x,y,z] = [ix, -i y, z]  = [-x, y, iz].
\]
Denote by $R=\oplus_{m=0}^\infty H^0(\bP^2, \cO(m))$ the graded section ring of $(\bP^2, \cO(1))$.
If we pick the latter $\bmu_4$-linearization on $(\bP^2, \cO(1))$ of weight $(2,0,1)$, then the $\bmu_4$-invariant subring $R^{\bmu_4}$ is generated by $y,x^2,xz^2, z^4$. Thus we have
\[
\bP^2/\bmu_4 \cong \Proj R^{\bmu_4} \cong \{(y_3^2 = y_2y_4\}\subset \bP(1,2,3,4)),
\]
where the generators corresponds to $y_1 = y$, $y_2 = x^2$, $y_3=xz^2$ and $y_4 = z^4$. Since the $\bmu_4$-action has one orbifold curve given by $(z=0)$ of order $2$ whose reduced image in $\bP^2/\bmu_4$ corresponds $\frac{1}{2}(y_4=0)$, we have $\frac{1}{16}\Delta = \frac{1}{2}\cdot \frac{1}{2}(y_4=0)$ which implies that $\Delta = \{y_4^4 =0\}$. It is easy to check that $(K_S^2) = 4$ and $\Delta \sim -4K_S$, hence $[(S, \frac{1}{16}\Delta)]\in \fM_{\frac{1}{16}}$. The proof is finished.
\end{proof}

\begin{cor}\label{cor:moduli}
The K-moduli compactification of pure $(2,2,4)$-complete intersections $\PP(1^5,2^2)$ only consists  of complete intersections of the same type.
\end{cor}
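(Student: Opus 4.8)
The plan is to exploit the finite morphism $\psi\colon\fM_{1/16}\to\Kps{3}{4}$ constructed above: since its image is \emph{precisely} the closure in $\Kps{3}{4}$ of the locus of pure $(2,2,4)$\nobreakdash-complete intersections, it suffices to show that for every \Kpolystable{} pair $[(S,\tfrac1{16}\Delta)]\in\fM_{1/16}$ the corresponding \threefold{} $\psi\big([(S,\tfrac1{16}\Delta)]\big)$ is a $(2,2,4)$\nobreakdash-complete intersection in $\PP(1^5,2^2)$. Recall that $\psi\big([(S,\tfrac1{16}\Delta)]\big)=[X]$, where $X$ is the double cover of $(Y,\tfrac12 D)$ furnished by Proposition~\ref{prop:surface-3fold} with $c=\tfrac1{16}$ (so that $\tfrac{5+16c}{12}=\tfrac12$): here $Y=C_p(S,-2K_S)$, and $D=\{y^2=f\}$ for a section $f\in H^0(S,-4K_S)$ cutting out $\Delta$, with $\{y=0\}$ the section at infinity of $Y$.

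First I would feed in Proposition~\ref{prop:dP4}: for $c=\tfrac1{16}$ every such $S$ is either smooth or has only $A_1$, $A_2$, or $A_3$ singularities. All of these are Du~Val, hence $S$ is in every case a Gorenstein canonical del Pezzo surface of degree~$4$. For such a surface $-K_S$ is very ample, and a Riemann--Roch computation gives $h^0(S,-K_S)=5$ and $h^0(S,-2K_S)=13$; since $S$ is projectively normal in its anticanonical embedding $S\subset\PP^4$, its homogeneous ideal contains exactly $15-13=2$ independent quadrics $g,h$, and as $S$ is reduced, irreducible, of dimension~$2$ and of degree~$4$ it must equal the complete intersection $\{g=h=0\}\subset\PP^4$. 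Because $\Delta\sim-4K_S=\cO_S(4)$, we may then write $\Delta=\{f=g=h=0\}\subset\PP^4$ for some quartic form $f$ in the coordinates of $\PP^4$ (in the exceptional $A_3$ case, $f$ is the fourth power of a linear form, by Proposition~\ref{prop:dP4}).

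Next I would make the cone and the double cover explicit. With $S=\{g=h=0\}\subset\PP^4$, the projective cone $Y=C_p(S,-2K_S)$ is exactly $\{g=h=0\}\subset\PP(1^5,2)$, the weight~$2$ coordinate $y_0$ playing the role of the cone variable and the section at infinity $\{y_0=0\}\cap Y$ being a copy of $S$; the branch divisor $D$ is then cut out on $Y$ by the degree~$4$ polynomial $y_0^2-f$. Therefore the double cover $X\to(Y,\tfrac12 D)$ is $X=\{\,y_1^2=y_0^2-f,\ g=0,\ h=0\,\}\subset\PP(1^5,2^2)$, where $y_1$ has weight~$2$; applying the automorphism of $\PP(1^5,2^2)$ that replaces $(y_0,y_1)$ by $(y_0+y_1,\,y_0-y_1)$ brings $X$ to the form $\{\,y_0y_1=f,\ g=0,\ h=0\,\}$, which is visibly a $(2,2,4)$\nobreakdash-complete intersection in $\PP(1^5,2^2)$. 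This proves the corollary; run in the opposite direction, the same coordinate computation also gives the converse assertion stated after Proposition~\ref{prop:surface-3fold}.

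The bulk of the work is already behind us: the \Kstability{} transfer is Proposition~\ref{prop:surface-3fold}, and the control of the singularities of $S$ is Proposition~\ref{prop:dP4}. The only step that wants genuine care is the coordinate identification of $Y$ with $\{g=h=0\}\subset\PP(1^5,2)$ and of $D$ with $\{y_0^2=f\}$, so that the double cover manifestly sits inside $\PP(1^5,2^2)$ as a complete intersection of two quadrics and a quartic; one should be especially attentive in the $A_3$ case, where $S$ is presented as a weighted hypersurface in $\PP(1,2,3,4)$ yet must be re-embedded anticanonically into $\PP^4$ as a $(2,2)$\nobreakdash-complete intersection before the construction applies.
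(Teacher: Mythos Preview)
Your proposal is correct and follows essentially the same approach as the paper: use Proposition~\ref{prop:dP4} to conclude that every $S$ appearing in $\fM_{1/16}$ has only Du~Val singularities, hence is a $(2,2)$-complete intersection in $\PP^4$, so that the cone $Y$ sits in $\PP(1^5,2)$ and the double cover $X$ is a $(2,2,4)$-complete intersection in $\PP(1^5,2^2)$. You supply more detail than the paper (the Riemann--Roch count, projective normality, and the explicit coordinate change $y_1^2=y_0^2-f\rightsquigarrow y_0y_1=f$), but the argument is the same.
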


\begin{proof}
By Proposition \ref{prop:dP4}, we see that every pair $(S,\Delta)$ in $\fM_{\frac{1}{16}}$ satisfies that $S$ has du Val singularities. Hence $S=\{g=h=0\}$ is always a complete intersection of two quadrics in $\bP^4$ and $\Delta = \{f=0\}|_S$, which implies that $Y=C_p(S, -2K_S)=\{g=h=0\}$ is the complete intersection in $\PP(1^5,2)$. Hence $\psi(S,\Delta)$ is a complete intersection $X_{2,2,4}\!\subset\!\PP(1^5,2^2)$ given by $\{y_0y_1 = f\}$ and $\{g=h=0\}$. Thus the statement follows from Theorem \ref{thm:finite-morphism}.
\end{proof}

\section{Wall crossing}\label{wall}

In this section, we study the explicit wall crossings for the K-moduli spaces $\fM_c$ where $c\in (0, \frac{1}{16}]$. Similar problems were studied in \cite{wall_crossing_K_moduli, K-moduli_quadric_K3, adl_quartic_K3, theo, Zhao22}.

Let us recall the convention from \cite{wall_crossing_K_moduli}. A wall $c=c_i$ of the K-moduli spaces $\fM_c$ is a rational value such that K-polystability changes from $c=c_i-\epsilon$ to $c=c_i+\epsilon$ for $0<\epsilon \ll 1$. By \cite[Theorems 1.1 and 1.2]{wall_crossing_K_moduli}, there are finitely many walls $0< c_1<c_2<\cdots< c_k\leq\frac{1}{16}$ for the K-moduli spaces $\fM_c$ with $c\in (0,\frac{1}{16}]$. Moreover, we have wall crossing diagrams of birational morphisms
\[
 \fM_{c_i-\epsilon}\to \fM_{c_i} \leftarrow \fM_{c_i+\epsilon}.
\]
A K-polystable pair $(S,c_i\Delta)$ in $\fM_{c_i}$ is called a \emph{new K-polystable pair on the wall $c_i$} if $(S,c\Delta)$ is K-unstable whenever $c\neq c_i$. By \cite{wall_crossing_K_moduli} we know that every new K-polystable pair on a wall  admits an effective $\bG_m$-action, hence is not K-stable; on the other hand, every wall admits a new K-polystable pair.

In addition to walls in $(0,\frac{1}{16}]$, there is an initial wall crossing at $c_0=0$, where we will have a fibre-type contraction $\fM_{\epsilon}\to \fM_0$. Here $\fM_0$ is the K-moduli space of $\bQ$-Gorenstein smoothable del Pezzo surfaces of degree $4$, which is literally the K-moduli space of $(S,0\Delta)$. By \cite{mabuchi_mukai, odaka_spotti_sun} we know that  $\fM_0$ is isomorphic to the GIT moduli space of complete intersections of two quadric hypersurfaces in $\bP^4$. Note that this initial wall crossing was not explicitly stated in \cite{wall_crossing_K_moduli}, although the techniques therein are sufficient to deduce such a result.

\begin{thm}\label{thm:full-wall-crossing}
    There are four walls for $c\in (0,\frac{1}{16}]$ for the K-moduli spaces $\fM_c$: $c_1=\frac{1}{28}$, $c_2=\frac{1}{22}$, $c_3=\frac{1}{19}$, $c_4 = \frac{1}{16}$. The new K-polystable pairs $(S,c_i\Delta)$ on the walls are completely classified  in Proposition \ref{prop:dP4}, Theorems \ref{thm:walls-29} and \ref{thm:walls-33} where $S$ is isomorphic to \textnumero 25, 29, and 33 in Table \ref{table:dP4-Gm} respectively. Moreover, the initial wall crossing at $c_0=0$ gives a surjective morphism $\fM_{\epsilon}\to \fM_0$ with connected fibres whose general fibre is isomorphic to $\bP^{40}/\bmu_2^4$.
\end{thm}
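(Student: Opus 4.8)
The plan is to run the general wall-crossing machinery of \cite{wall_crossing_K_moduli} and then pin down all the walls by an explicit analysis of degree~$4$ del Pezzo surfaces carrying a $\bG_m$-action. By \cite[Theorems~1.1 and 1.2]{wall_crossing_K_moduli} there are finitely many walls $0<c_1<\dots<c_k\leq\tfrac1{16}$, the wall-crossing morphisms $\fM_{c_i-\epsilon}\to\fM_{c_i}\leftarrow\fM_{c_i+\epsilon}$ exist, every new K-polystable pair on a wall carries an effective $\bG_m$-action, and conversely every wall is witnessed by such a pair. Hence the problem reduces to classifying all $\bQ$-Gorenstein smoothable del Pezzo surfaces $S$ of degree~$4$ with an effective $\bG_m$-action, together with the $\bG_m$-invariant divisors $\Delta\in|-4K_S|$ up to $\Aut(S)$, and, for each such datum, identifying the unique $c\in(0,\tfrac1{16}]$ — if it exists — at which $(S,c\Delta)$ becomes K-polystable while being K-unstable for all other $c$.

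First I would invoke Proposition~\ref{prop:dP4}: any $S$ occurring in $\fM_c$ with $c\le\tfrac1{16}$ is smooth or has only $A_1,A_2,A_3$ singularities, and those admitting a $\bG_m$-action form the finite list recorded in Table~\ref{table:dP4-Gm}, through which I would go surface by surface. For a $\bG_m$-surface $S$ the action $\lambda$ together with an invariant $\Delta$ produces a product test configuration of the log Fano pair $(S,c\Delta)$, polarized by $-(K_S+c\Delta)=(1-4c)(-K_S)$, and K-polystability forces the generalized Futaki invariant, equivalently the $\bG_m$-degree of the CM $\bQ$-line bundle, to vanish; this is an algebraic equation in $c$ once $S$, $\lambda$, $\Delta$ are fixed, and solving it for the distinguished configurations attached to surfaces \textnumero~25, 29, 33 produces the values $\tfrac1{16}$ (this is Proposition~\ref{prop:dP4}) and $\tfrac1{28},\tfrac1{22},\tfrac1{19}$ (Theorems~\ref{thm:walls-29} and~\ref{thm:walls-33}), with one of the surfaces \textnumero~29 and~33 accounting for two of the walls. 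For each candidate value one still has to prove that the pair really is K-polystable, not merely Futaki-balanced: here it suffices to test $\bG_m$-invariant valuations, which I would implement either by a normalized-volume/local-global-comparison argument in the style of Proposition~\ref{prop:dP4}, or by the Abban--Zhuang refinement method \cite{AbbanZhuang} adapted to the $\bG_m$-structure. Finally, to see that these four walls are all of them, one checks that every other entry of Table~\ref{table:dP4-Gm} either forces $c$ outside $(0,\tfrac1{16}]$ or admits no invariant $\Delta$ making $(S,c\Delta)$ K-polystable in that range.

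For the initial wall at $c_0=0$ the morphism $\fM_\epsilon\to\fM_0$ is the forgetful map $(S,\epsilon\Delta)\mapsto S$; it is well defined and surjective because, choosing $\epsilon\ll1$ uniformly (possible by boundedness), a general $\Delta\in|-4K_S|$ gives a klt log Calabi--Yau pair $(S,\tfrac14\Delta)$, so by interpolation \cite[Proposition~2.13]{wall_crossing_K_moduli} and openness $(S,\epsilon\Delta)$ is K-semistable exactly when $S$ is, and $\fM_0$ is the K-moduli of degree~$4$ del Pezzo surfaces, identified by \cite{mabuchi_mukai,odaka_spotti_sun} with the GIT moduli of $(2,2)$-complete intersections in $\bP^4$. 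Over a general point, corresponding to a smooth $S$ with $\Aut(S)\cong\bmu_2^4$, the fibre is the GIT quotient $\bP(H^0(S,-4K_S))/\Aut(S)$; Riemann--Roch on the surface gives $h^0(S,-4K_S)=1+2\cdot4\cdot5=41$, so the general fibre is $\bP^{40}/\bmu_2^4$. Connectedness of all fibres then follows from Stein factorization: $\fM_\epsilon$ is irreducible, $\fM_0$ is normal, and the proper map $\fM_\epsilon\to\fM_0$ has connected general fibre, so its Stein factorization is finite and birational over a normal base, hence an isomorphism, and every fibre is connected.

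The hard part will be the second paragraph: verifying that each Futaki-balanced candidate $(S,c_i\Delta)$ is genuinely K-polystable, together with the exhaustiveness check over all $\bG_m$-surfaces in Table~\ref{table:dP4-Gm} — this is precisely the content of Theorems~\ref{thm:walls-29} and~\ref{thm:walls-33}, and it demands a careful case-by-case study of invariant boundary divisors and their stability thresholds, since a single surface may contribute several walls or none at all.
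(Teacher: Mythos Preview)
Your strategy is essentially the paper's, but there is a genuine gap in the exhaustiveness step. You propose to discard the remaining entries of Table~\ref{table:dP4-Gm} (namely \textnumero~30, 34, 35) by checking that each ``either forces $c$ outside $(0,\tfrac1{16}]$ or admits no invariant $\Delta$ making $(S,c\Delta)$ K-polystable in that range.'' This is false: for instance \textnumero~30 with the toric boundary $\Delta = \{y_1^4 y_1'^4 = 0\}$ gives a pair $(S,c\Delta)$ that is K-polystable for every $c \in [0,\tfrac14)$ by interpolation, in particular at $c=\tfrac1{16}$. The correct reason these three surfaces contribute no walls is that they are already K-semistable (by \cite{mabuchi_mukai, odaka_spotti_sun}), and a \emph{new} K-polystable pair necessarily has $S$ K-unstable: if $S$ is K-semistable and $(S,c_i\Delta)$ is K-semistable, then by the interval structure of K-semistability in the coefficient \cite{wall_crossing_K_moduli} the pair is K-semistable on all of $[0,c_i]$, so it cannot be K-unstable for $0<c<c_i$. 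The paper makes this observation the first move of the proof, immediately cutting Table~\ref{table:dP4-Gm} down to the K-unstable surfaces \textnumero~25, 29, 33.

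A smaller point: your wall accounting is off. Surface \textnumero~29 alone produces all four walls $\tfrac1{28},\tfrac1{22},\tfrac1{19},\tfrac1{16}$ (Theorem~\ref{thm:walls-29}); \textnumero~33 and \textnumero~25 each contribute only at $c_4=\tfrac1{16}$. For the initial wall your outline is close to the paper's, though the paper constructs the forgetful map at the level of stacks and descends via the universal property of good moduli spaces \cite{alper}, then invokes normality of \emph{both} $\fM_\epsilon$ and $\fM_0$ (cited from \cite{K-moduli_quadric_K3}) together with Zariski's main theorem, rather than irreducibility of the source plus Stein factorisation; for your variant to go through you would still need to justify irreducibility of $\fM_\epsilon$ and properness of the map.
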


To find all walls for the K-moduli spaces $\fM_c$ where $c\in (0,\frac{1}{16}]$, we need to classify new K-polystable pairs $(S, c\Delta)$, i.e.\ it is K-polystable but not K-stable, and $S$ is  K-unstable. This way we can obtain a complete list of walls and new K-polystable pairs on the wall.

 By Proposition \ref{prop:dP4}, we know that either $S$ is a K-unstable del Pezzo surface of degree $4$ with at worst $A_2$ singularities that admits a $\bG_m$-action, or $S$ is isomorphic to  \textnumero 25.
The following table from \cite[Big Table]{CP20} classifies del Pezzo surface of degree $4$ with at worst $A_2$ singularities that admits a $\bG_m$-action, with an additional row of \textnumero 25 which shows up in $\fM_{\frac{1}{16}}$ only.

\begin{center}
\begin{table}[htbp!]\renewcommand{\arraystretch}{1.5}
\caption{Del Pezzo surfaces of degree $4$ with $\bG_m$-actions and at worst $A_2$-singularities, together with \textnumero 25}\label{table:dP4-Gm}
\begin{tabular}{|c|c|c|c|c|}
\hline
\textnumero & $\rho$ & \textbf{Sing.}   & $\Aut^0$ & \textbf{Equation \& Ambient space}\\ \hline \hline
29 & $2$ & $A_22A_1$  & $\bG_m^2$ & $\{y_2y_2' = y_1^3 y_1'\}\subset \bP(1,1,2,2)$   \\ \hline
30 & $2$ & $4A_1$  & $\bG_m^2$  & $\{y_2y_2' = y_1^2 y_1'^2\}\subset \bP(1,1,2,2)$
        \\ \hline
33 & $3$ & $A_2  A_1$  & $\bG_m$  & $\{x_0 x_1 -x_2 x_3 = x_1x_2 + x_2x_4 + x_3x_4=0\}\subset \bP^4$
        \\ \hline
34 & $3$ & $3A_1$  & $\bG_m$  & $\{y_2y_2' = y_1^2 y_1' (y_1 + y_1')\}\subset \bP(1,1,2,2)$
        \\ \hline
35 & $4$ & $2A_1$  & $\bG_m$  & $\{y_2y_2' = y_1 y_1' (y_1' - y_1)(y_1' -\lambda y_1)\}\subset \bP(1,1,2,2)$ for $\lambda\in \bC\setminus\{0,1\}$
        \\ \hline\hline
25 & $1$ & $A_3 2A_1$ & $\mathbb{B}_2 \times \bG_m$ & $\{y_3^2 = y_2 y_4\}\subset \bP(1,2,3,4)$      \\
\hline
\end{tabular}
\end{table}
\end{center}

Here $\mathbb{B}_2$ represents the Borel subgroup of $\mathrm{PGL}_2(\mathbb{C})$.

From Table \ref{table:dP4-Gm} and \cite{mabuchi_mukai, odaka_spotti_sun}, we see that \textnumero 25, 29, 33 are K-unstable, while \textnumero 30, 35 are K-polystable, and \textnumero 34 is K-semistable but not K-polystable.

From now on, we study the walls from $\fM_\epsilon$ to $\fM_{\frac{1}{16}}$ which involve only \textnumero 25, 29 or 33. Since there is only one wall
at $c=\frac{1}{16}$ involving \textnumero 25 by Proposition \ref{prop:dP4}, we will focus on walls involving \textnumero 29 or 33.

\subsection{\textnumero 29}
Here $S= \{y_2 y_2' = y_1^3 y_1'\}\subset \bP(1,1,2,2)_{[y_1,y_1',y_2,y_2']}$.

\begin{thm}\label{thm:walls-29}
There are precisely four walls in $(0,\frac{1}{16}]$ involving $S$ from \textnumero 29 such that $(S,c_i\Delta_i)$ is a new K-polystable pair for the following choices of $c_i$ and $\Delta_i$.
\begin{enumerate}
    \item $c_1=\frac{1}{28}$, $\Delta_1= \{y_1'^8 =0\}$.
    \item $c_2= \frac{1}{22}$, $\Delta_2 = \{y_1 y_1'^7=0\}$.
    \item $c_3= \frac{1}{19}$, $\Delta_3 = \{(y_2+y_2')y_1'^6=0\}$.
    \item $c_4= \frac{1}{16}$, $\Delta_4 = \Delta_{4,\mathbf{a}}$ or $\Delta_{4,\infty}$, where
    \[
    \Delta_{4,\mathbf{a}} = \{y_2 y_1'^6 + a_1 y_1^2 y_1'^6 + a_2 y_1 y_2' y_1'^5 + a_3 y_2'^2 y_1'^4=0\}~\textrm{ for }~\mathbf{a} = (a_1,a_2,a_3)\in \bC\times (\bC^2\setminus\{(0,0)\}),
    \]
    and $\Delta_{4,\infty}=\{y_1^2 y_1'^6=0\}$.

\end{enumerate}
\end{thm}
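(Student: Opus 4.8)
The plan is to follow the strategy of \cite{wall_crossing_K_moduli} and \cite[\S5--6]{adl_quartic_K3}, combined with the Abban--Zhuang refinement used in \S\ref{sec:infinite-family}. The surface $S$ from \textnumero 29 is toric, with $\Aut^0(S)=\bG_m^2$ and singularities $A_2+2A_1$; I would fix once and for all the toric data, in particular the moment polytope of $-K_S$ and its barycentre $b\neq 0$ (which encodes that $S$ is K-unstable). By \cite{wall_crossing_K_moduli} together with Zhuang's equivariant K-stability criterion \cite{Zhuang}, every new K-polystable pair $(S,c\Delta)$ on a wall admits an effective $\bG_m$-action, so $\Delta\sim -4K_S$ must be semi-invariant under some one-parameter subgroup $\lambda_v\subset\bG_m^2$. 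The first step is thus to list, up to $\Aut(S)$, those members of $|-4K_S|$. Decomposing $H^0(S,-4K_S)$ into $\bG_m^2$-weight spaces — a monomial basis in $y_1,y_1',y_2,y_2'$ of degree $8$ modulo $y_2y_2'=y_1^3y_1'$, equivalently the lattice points of four times the del Pezzo polytope — a $\lambda_v$-semi-invariant $\Delta$ is one whose monomials all lie on a single level set of $\langle\,\cdot\,,v\rangle$. This includes torus-fixed $\Delta$'s supported on a single boundary line (e.g.\ $\Delta_1=\{y_1'^8=0\}=8\{y_1'=0\}$) as well as $\Delta$'s invariant under only a rank-one subtorus.

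For each such configuration I would then determine the value $c=c(\Delta)$ at which $(S,c\Delta)$ can be K-polystable. If $\Delta$ is torus-fixed, $(S,c\Delta)$ is a toric log Fano pair and, by \cite{berman_polystability}, it is K-polystable iff the barycentre of the moment polytope of $-(K_S+c\Delta)$ — a translate of $(1-4c)$ times that of $-K_S$ — is the origin; this is one linear equation in $c$. If $\Delta$ is only $\lambda_v$-semi-invariant, the Donaldson--Futaki invariant $\Fut_c(\lambda_v)$ of the induced product test configuration is affine-linear in $c$ and must vanish, again pinning down $c$. In either case one checks that $(S,c\Delta)$ is klt log Fano — immediate since $c\le\tfrac1{16}$ and $\Delta\sim -4K_S$ meets the singular points and the $\bG_m$-fixed curves with small multiplicities — and one retains $c$ only if it lies in $(0,\tfrac1{16}]$.

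Next, K-polystability at $c=c(\Delta)$ has to be established. In the torus-fixed case the barycentre condition already suffices, exactly as in Proposition~\ref{prop:specific}. In the rank-one case, by \cite{Zhuang} it is enough to show that $(S,c\Delta)$ is $G$-equivariantly K-semistable for $G=\Aut^0(S,\Delta)\supseteq\bG_m$, since the Futaki invariant already vanishes and the only product test configurations come from $\lambda_v^{\pm1}$; I would do this as in \S\ref{sec:infinite-family}, choosing a $\bG_m$-invariant flag $\{q\}\subset C\subset S$ (with $C$ a pointwise-$\bG_m$-fixed rational curve or a torus-invariant boundary curve), computing the Zariski decomposition of $-K_S-c\Delta-uC$, and applying the refinement formulae of \cite{AbbanZhuang} and \cite[\S1.7]{calabi_problem_3folds}. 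Since $S$ is a surface the resulting integrals are one- and two-dimensional, so this is finite though lengthy; the conclusion should be $\delta_G(S,c\Delta)\ge 1$, with equality realised only by the product configurations, so that $(S,c\Delta)$ is K-polystable but not K-stable. Conversely, for $c\ne c(\Delta)$ the product test configuration along $\lambda_v$ has nonzero Donaldson--Futaki invariant of one sign or the other, so $(S,c\Delta)$ is K-unstable; hence $(S,c(\Delta)\Delta)$ is a new K-polystable pair and $c(\Delta)$ is a genuine wall.

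Finally, completeness: running this enumeration over all primitive $v$ and all level sets, modulo $\Aut(S)$ (the residual torus and the involution $y_2\leftrightarrow y_2'$, together with any further finite symmetries), should leave exactly four surviving configurations, giving $c_1=\tfrac1{28}$, $c_2=\tfrac1{22}$, $c_3=\tfrac1{19}$, $c_4=\tfrac1{16}$ with the stated $\Delta_i$; every other configuration yields $c(\Delta)\notin(0,\tfrac1{16}]$, or a non-klt pair, or a pair with vanishing $\Fut$ that is still destabilised by some other $\bG_m$-invariant divisorial valuation, detected by the same refinement estimate. At the top wall $c_4=\tfrac1{16}$ the semi-invariant divisors form the three-parameter family $\Delta_{4,\mathbf a}$ together with its limit $\Delta_{4,\infty}=\{y_1^2y_1'^6=0\}$; here one additionally identifies which $\mathbf a$ give closed $\Aut(S)$-orbits — a small VGIT computation — and checks that the generic $\Delta_{4,\mathbf a}$ degenerates, inside $\fM_{1/16}$, to the pair over the $A_3$-surface \textnumero 25 of Proposition~\ref{prop:dP4}, so that the picture matches the known description of $\fM_{1/16}$. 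I expect the main obstacle to be precisely this equivariant K-polystability verification at the four walls: with only a rank-one torus there is no barycentre shortcut, and the Zariski-decomposition/Abban--Zhuang computation on the mildly singular del Pezzo surface has to be pushed through at every value of $c$, in particular excluding any non-product $\bG_m$-invariant valuation with vanishing $\beta$; the $c_4$ case adds the difficulty of coupling this valuative analysis with a moduli (VGIT) description of the family $\Delta_{4,\mathbf a}$.
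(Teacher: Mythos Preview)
Your overall architecture is correct and would succeed, but the paper takes two shortcuts that bypass exactly the parts you flag as ``the main obstacle''.

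First, for the K-polystability verification in the rank-one cases (your $\Delta_3$ and $\Delta_{4,\mathbf a}$), the paper does not run an Abban--Zhuang flag refinement. Instead it invokes the $\bG_m$-equivariant criterion of \cite[Theorem 3.2]{Liu23} (also \cite[Theorem 1.3.9]{calabi_problem_3folds}): once the $\bG_m$-action is fixed, it suffices to check $\beta=0$ on the horizontal valuation inducing the action and $\beta>0$ on every vertical prime divisor. On this surface the vertical divisors form a short explicit list ($C$, $C'$, $B$, $B'$, and a one-parameter family $D_a$ or $D_b$), and their $S$-invariants are read off directly from the Zariski decomposition of $-K_S-tD$. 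No flag, no double integral; the whole thing is a few lines per case. Your refinement approach would give the same inequalities after much more work.

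Second, for completeness the paper does not enumerate primitive $v$'s and level sets. Given any K-polystable $(S,c\Delta)$, it applies a fixed $\bG_m$-action $\sigma$ (or $\sigma'$) to degenerate $\Delta$ to the initial term of its defining polynomial $f$, stratifying by which monomials appear in $f$ (is $y_1'^8$ present? if not, is $y_1y_1'^7$ present? etc.). Each stratum degenerates $\Delta$ to one of the listed $\Delta_i$, and since that limit was already shown to be K-polystable at $c_i$, \cite[Proposition 3.18]{wall_crossing_K_moduli} forces $(S,\Delta)\cong(S,\Delta_i)$ and $c=c_i$. The leftover case ``no monomial divisible by $y_1'^6$'' is killed by exhibiting a single destabilising valuation $v_2$ (the quasi-monomial combination of $\ord_B,\ord_{B'}$) with $\beta_{(S,c\Delta)}(v_2)<0$ for all $c\le\tfrac1{16}$. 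This is much tighter than your proposed case analysis over all one-parameter subgroups, which as written is an infinite enumeration you would still need to organise (only finitely many $v$ admit non-monomial level sets, but you don't say how to isolate them).

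Finally, the VGIT/orbit-closure analysis you propose at $c_4$, and the degeneration to \textnumero 25, are not part of the theorem and the paper does not carry them out here; the statement only asserts that each listed $(S,c_i\Delta_i)$ is a new K-polystable pair and that there are no others.
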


We first show that these pairs are K-polystable. The fact that $S$ is toric can be seen by
\[
(s,t)\cdot [y_1,y_1',y_2, y_2'] = [sy_1,s^{-3}y_1',ty_2, t^{-1}y_2']
\]
It is clear that $\Delta_i$ for $i\leq 2$ is toric. Thus it suffices to show the Futaki character $\Fut_{(S, c_i \Delta_i)}=0$. In addition, $S$ admits an involution $\tau: S\to S$ given by $\tau([y_1,y_1',y_2, y_2'])=[y_1,y_1',y_2', y_2]$.

Denote by $C=\{y_1'=y_2=0\}$, $C'=\{y_1'=y_2'=0\}$, $B= \{y_1=y_2 =0\}$, $B'= \{y_1=y_2'=0\}$.
The intersection numbers of these curves are summarized in Table \ref{table:No29}.
\begin{table}[ht!]
\caption{\label{table:No29} Intersection numbers of the surface \textnumero 29}
\begin{center}
\renewcommand\arraystretch{1.2}
\begin{tabular}{|c||c|c|c|c|}
\hline
$\bullet$  & $C$ & $C^\prime$ & $B$ & $B'$ \\
\hline\hline
$C$        & $-\frac{1}{2}$ & $1$ & $\frac{1}{2}$ & $0$\\
\hline
$C^\prime$  & $1$ & $-\frac{1}{2}$ & $0$ & $\frac{1}{2}$\\
\hline
$B$ & $\frac{1}{2}$ & $0$ & $\frac{1}{6}$ & $\frac{1}{3}$\\
\hline
$B'$ & $0$ & $\frac{1}{2}$ & $\frac{1}{3}$ & $\frac{1}{6}$\\
\hline
\end{tabular}
\end{center}

\end{table}

\begin{lem}\label{lem:NE29}
The Mori cone $\overline{NE}(S)$ is generated by $[C]$ and $[C']$.
\end{lem}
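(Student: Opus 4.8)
The plan is to use that $S$ has Picard rank $2$ (Table~\ref{table:dP4-Gm}), so that $\overline{NE}(S)$ is a full-dimensional closed cone in $N_1(S)_\RR\cong\RR^2$; since $S$ is del Pezzo, $-K_S$ is ample, hence $\overline{NE}(S)$ is strongly convex and is therefore spanned by exactly two extremal rays. It thus suffices to show that $[C]$ and $[C']$ lie on two distinct extremal rays of $\overline{NE}(S)$.

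First I would invoke the standard fact that an irreducible curve $\Gamma$ of negative self-intersection on a $\QQ$-factorial projective surface spans an extremal ray of $\overline{NE}$. Briefly: if $\Gamma\equiv Z_1+Z_2$ with $Z_1,Z_2$ nonzero effective, write $Z_i=a_i\Gamma+Z_i'$ with $a_i\geq 0$ and $\Gamma\not\subset\Supp(Z_i')$; intersecting $(1-a_1-a_2)\Gamma\equiv Z_1'+Z_2'$ with $\Gamma$ and using $\Gamma^2<0$, $\Gamma\cdot Z_i'\geq 0$ forces $a_1+a_2\geq 1$, and then pairing the effective divisor $(a_1+a_2-1)\Gamma+Z_1'+Z_2'\equiv 0$ with the ample class $-K_S$ forces $a_1+a_2=1$ and $Z_1'=Z_2'=0$, so $[Z_i]\in\RR_{\geq 0}[\Gamma]$. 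By Table~\ref{table:No29} we have $C^2=(C')^2=-\tfrac12<0$, so $[C]$ and $[C']$ each span an extremal ray of $\overline{NE}(S)$.

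It remains to see $[C]$ and $[C']$ are not proportional. If $[C']=\lambda[C]$, then intersecting with $C$ gives $1=C\cdot C'=\lambda C^2=-\tfrac12\lambda$, so $\lambda=-2<0$; but $-K_S\cdot C>0$ and $-K_S\cdot C'>0$, so no negative multiple is possible. Hence $[C]$ and $[C']$ span the two distinct extremal rays of the two-dimensional strongly convex cone $\overline{NE}(S)$, proving the lemma.

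I expect no serious obstacle here; the only points requiring care are the strong convexity of $\overline{NE}(S)$ (which rules out the spurious relation $[C']=-2[C]$ and guarantees there are exactly two extremal rays) and applying the negativity criterion over the mildly singular surface $S$ (which has du Val $A_2+2A_1$ singularities, hence is $\QQ$-factorial, so the $\QQ$-valued intersection theory of Table~\ref{table:No29} behaves as expected). As an alternative one can argue torically, since $S$ carries a $\GG_m^2$-action: its fan has four rays with invariant prime divisors $C,C',B,B'$, these four classes generate $\overline{NE}(S)$, and since $B^2=(B')^2=\tfrac16>0$ the classes $[B],[B']$ are big and hence interior to $\overline{NE}(S)$, leaving $[C]$ and $[C']$ as the generators.
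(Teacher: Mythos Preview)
Your proof is correct. The paper's own argument is precisely the toric alternative you sketch at the end: since $S$ is toric with four torus-invariant prime divisors $C$, $C'$, $B$, $B'$, these generate $\overline{NE}(S)$; from Table~\ref{table:No29} one reads off that $B$ and $B'$ are nef and big, hence lie in the interior, leaving $[C]$ and $[C']$ as the extremal generators.

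Your primary route---exploiting $\rho(S)=2$ together with $C^2=(C')^2=-\tfrac12<0$---is a genuinely different and slightly more portable argument: it does not use the torus action at all, only the intersection numbers and the ampleness of $-K_S$. The toric argument, on the other hand, is marginally quicker because it sidesteps the extremality lemma for negative curves and the non-proportionality check. Either is entirely adequate here.
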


\begin{proof}
    From the torus action on $S$, we see that there are precisely four torus invariant divisors: $C$, $C'$, $B$, and $B'$. Thus $\overline{NE}(S)$ is generated by these four curves.  From the intersection table above we see that both $B$ and $B'$ are nef and big, while both $C$ and $C'$ are extremal. Thus the proof is finished.
\end{proof}

We first compute $S$-invariant of these curves.

\begin{prop}
    Under the above notation, we have
    \[
    S_S(C) = S_S(C') = \frac{5}{6}, \qquad S_S(B) = S_S(B') = \frac{7}{6}.
    \]
\end{prop}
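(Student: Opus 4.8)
The plan is to compute $S_S(D)$ for each of the four torus-invariant prime divisors $D\in\{C,C',B,B'\}$ straight from the definition
\begin{equation*}
S_S(D)=\frac{1}{(-K_S)^2}\int_0^{\infty}\vol\big(-K_S-uD\big)\,du=\frac14\int_0^{\infty}\vol\big(-K_S-uD\big)\,du,
\end{equation*}
by describing the Zariski decomposition of $-K_S-uD$ for all $u\ge 0$. The inputs I would use are: $S$ is a $\bQ$-factorial toric surface of Picard rank $2$, its torus-invariant prime divisors are exactly $C,C',B,B'$, and $-K_S\sim C+C'+B+B'$, so that $(-K_S)\cdot C=(-K_S)\cdot C'=(-K_S)\cdot B=(-K_S)\cdot B'=1$ by Table~\ref{table:No29}. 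Since $\overline{NE}(S)$ is generated by $[C]$ and $[C']$ by Lemma~\ref{lem:NE29}, and both $C$ and $C'$ have negative self-intersection, a divisor class on $S$ is nef precisely when it meets $C$ and $C'$ non-negatively, and the negative part of any pseudoeffective class is supported on a subset of $\{C,C'\}$. Hence the whole Zariski chamber structure of the ray $u\mapsto -K_S-uD$ is pinned down by a few intersection numbers from Table~\ref{table:No29}.

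For $D=C$, one computes $(-K_S-uC)\cdot C=1+\tfrac{u}{2}\ge 0$ for all $u$, while $(-K_S-uC)\cdot C'=1-u$. Thus $-K_S-uC$ is nef on $[0,1]$ with $\vol(-K_S-uC)=(-K_S-uC)^2=4-2u-\tfrac12u^2$; for $u>1$ the negative part is $2(u-1)C'$, which gives $\vol(-K_S-uC)=\tfrac32(u-2)^2$ on $[1,2]$ and $\vol=0$ for $u\ge2$ (consistent with $-K_S-uC\sim_{\bQ}(2-u)C+2C'$ being pseudoeffective exactly for $u\le2$). Integrating, $S_S(C)=\tfrac14\big(\tfrac{17}{6}+\tfrac12\big)=\tfrac56$. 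For $D=B$ the shape is the same with the roles of $C$ and $C'$ swapped: $(-K_S-uB)\cdot C'=1\ge0$ and $(-K_S-uB)\cdot C=1-\tfrac{u}{2}$, so $-K_S-uB$ is nef on $[0,2]$ with $\vol(-K_S-uB)=4-2u+\tfrac16u^2$; for $u>2$ the negative part is $(u-2)C$, giving $\vol(-K_S-uB)=\tfrac23(u-3)^2$ on $[2,3]$ and $0$ for $u\ge3$, so that $S_S(B)=\tfrac14\big(\tfrac{40}{9}+\tfrac29\big)=\tfrac76$.

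The two remaining equalities come for free from symmetry: the involution $\tau\colon[y_1:y_1':y_2:y_2']\mapsto[y_1:y_1':y_2':y_2]$ is an automorphism of $S$ that interchanges $C\leftrightarrow C'$ and $B\leftrightarrow B'$, hence preserves $-K_S$ and the volume function, so $S_S(C')=S_S(C)=\tfrac56$ and $S_S(B')=S_S(B)=\tfrac76$. I do not expect any real obstacle here: the only place that needs (routine) care is confirming the Zariski chamber past the first wall — namely that exactly one of $C,C'$ enters the negative part and that the positive part stays nef all the way to the pseudoeffective threshold — and this is immediate from the rank-$2$ toric structure plus the signs of the relevant entries of Table~\ref{table:No29}.
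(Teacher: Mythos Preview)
Your proof is correct and follows essentially the same approach as the paper: use the involution $\tau$ for the symmetry $S_S(C)=S_S(C')$, $S_S(B)=S_S(B')$, then compute the Zariski decomposition of $-K_S-uD$ by testing against the two extremal classes $C,C'$ from Lemma~\ref{lem:NE29}, obtaining the same chamber structure, volume formulas, and integrals as in the paper. The only cosmetic difference is that the paper writes $-K_S=2(C+C')=2(B+B')$ and expresses the positive parts explicitly in those generators, whereas you work directly with intersection numbers; the content is identical.
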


\begin{proof}
First of all, the involution $\tau$ satisfies $\tau(C) = C'$ and $\tau(B) = B'$. Thus we have equalities $S_S(C) = S_S(C')$ and $S_S(B) = S_S(B')$.

Next, we compute $S_S(C)$. It is clear that $-K_S = \cO_S(2) = 2(C+C')$. This gives
\[
S_S(C) = \frac{1}{4}\int_0^\infty \vol_S((2-t)C + 2C') dt.
\]
It follows from Table \ref{table:No29} and Lemma \ref{lem:NE29} that $(2-t)C + 2C'$ is nef if  $0\leq t\leq 1$, and not big if $t\geq 2$. For $1\leq t\leq 2$, the Zariski decomposition is
\[
 (2-t) C + 2C' = P+N =  (2-t) (C+ 2C') + (2t-2) C'.
\]
Thus computation shows
\[
\vol((2-t)C + 2C') = \begin{cases}
4-2t - \frac{t^2}{2} & \textrm{ if }0\leq t\leq 1;\\
\frac{3}{2}(2-t)^2 & \textrm{ if }1\leq t\leq 2.
\end{cases}
\]
Thus
\[
S_S(C) = \frac{1}{4}\left( \int_0^1 (4-2t - \frac{t^2}{2})dt + \int_1^2 \frac{3}{2}(2-t)^2 dt\right) = \frac{5}{6}.
\]

Finally, we compute $S_S(B)$. It is clear that $-K_S = \cO_S(2) = 2(B+B')$. This gives
\[
S_S(B) = \frac{1}{4}\int_0^\infty \vol_S((2-t)B + 2B') dt.
\]
Moreover, by Table \ref{table:No29} and Lemma \ref{lem:NE29}, we know that  $(2-t)B+ 2B'$ is nef if $0\leq t\leq 2$, and not big if $t\geq 3$. For $2\leq t\leq 3$, the Zariski decomposition is
\[
 (2-t) C + 2C' = P+N =  (6-2t) B' + (t-2) C.
\]
Thus computation shows
\[
\vol((2-t)B + 2B') = \begin{cases}
4-2t + \frac{t^2}{6} & \textrm{ if }0\leq t\leq 2;\\
\frac{2}{3}(3-t)^2 & \textrm{ if }2\leq t\leq 3.
\end{cases}
\]
Thus
\[
S_S(B) = \frac{1}{4}\left( \int_0^2 (4-2t + \frac{t^2}{6})dt + \int_2^3 \frac{2}{3}(3-t)^2 dt\right) = \frac{7}{6}.
\]
\end{proof}

\begin{prop}\label{prop:29-1}
    The pairs $(S,c_i\Delta_i)$ from Theorem \ref{thm:walls-29} are K-polystable and not K-stable.
\end{prop}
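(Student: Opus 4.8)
The plan is to treat separately the toric members of the list ($\Delta_1$, $\Delta_2$, and $\Delta_{4,\infty}$) and the non-toric ones ($\Delta_3$ and $\Delta_{4,\mathbf a}$). First, none of the pairs is \Kstable: each $(S,c_i\Delta_i)$ carries a nontrivial one-parameter subgroup of $\Aut(S)=\mathbb{G}_m^2\rtimes(\text{finite})$ — the full $\mathbb{G}_m^2$ for the torus-invariant $\Delta_1,\Delta_2,\Delta_{4,\infty}$; the subtorus $\{(s,1)\}$ for $\Delta_3=\{(y_2+y_2')y_1'^6=0\}$; and the subtorus $\{(s,s^2)\}$ for $\Delta_{4,\mathbf a}$, this being the unique $1$-PS under which the four monomials in $\Delta_{4,\mathbf a}$ become of the same weight — so $\Aut(S,c_i\Delta_i)$ is infinite and $(S,c_i\Delta_i)$ cannot be \Kstable.

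For the toric pairs I would invoke \cite{berman_polystability}: $(S,c_i\Delta_i)$ is \Kpolystable{} if and only if the barycentre of the moment polytope of $-(K_S+c_i\Delta_i)$ is the origin, equivalently $\Fut_{(S,c_i\Delta_i)}\equiv 0$. Starting from the moment polytope of $-K_S\sim\cO_S(2)$ — whose combinatorics is encoded in Table~\ref{table:No29} and Lemma~\ref{lem:NE29} — one pushes the facets corresponding to the components of $\Delta_i$ inwards by $c_i$ times their multiplicities, reads off the polytope of $-(K_S+c_i\Delta_i)$, and computes its barycentre; the vanishing occurs exactly at $c_1=\tfrac{1}{28}$, $c_2=\tfrac{1}{22}$ and (for $\Delta_{4,\infty}$) $c_4=\tfrac{1}{16}$. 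This is a finite calculation.

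For $\Delta_3$ and $\Delta_{4,\mathbf a}$, let $G\le\Aut(S,c_i\Delta_i)$ be a maximal reductive subgroup, with identity component the $\mathbb{G}_m$ above (for $\Delta_3$ one may also adjoin the involution $\tau$, which preserves $\Delta_3$). By the equivariant theory of K-stability \cite{Zhuang} it suffices to show $(S,c_i\Delta_i)$ is $G$-equivariantly \Kpolystable; since it is not \Kstable, this reduces to verifying $\delta_G(S,c_i\Delta_i)\ge 1$ together with the statement that every $G$-equivariant special degeneration of $(S,c_i\Delta_i)$ with vanishing Donaldson--Futaki invariant is a product. For $\delta_G\ge 1$ one bounds $A_{(S,c_i\Delta_i)}(v)/S_{(S,c_i\Delta_i)}(v)$ below over $G$-invariant divisorial valuations $v$. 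On the $\mathbb{G}_m$-invariant prime divisors of $S$ — namely $C,C',B,B'$ and the members $\{y_2=\lambda y_2'\}|_S$ of $|{-K_S}|$ — this follows from $-(K_S+c_i\Delta_i)\sim_\mathbb{Q}(1-4c_i)(-K_S)$, from $A_S\equiv 1$, and from the values $S_S(C)=S_S(C')=\tfrac{5}{6}$, $S_S(B)=S_S(B')=\tfrac{7}{6}$ computed above, via $A_{(S,c_i\Delta_i)}(F)/S_{(S,c_i\Delta_i)}(F)=(1-c_i\ord_F(\Delta_i))/((1-4c_i)S_S(F))$; for a valuation centred at one of the finitely many $\mathbb{G}_m$-fixed points of $S$ one passes to the Abban--Zhuang refinement of $|{-(K_S+c_i\Delta_i)}|$ along a $\mathbb{G}_m$-invariant curve through the point and reduces to a one-variable estimate, exactly as in the threefold computation of \S\ref{sec:infinite-family}. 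The product condition then follows because a $G$-equivariant special degeneration is induced by a $G$-invariant divisorial valuation, and the only one attaining $A=S$ is the valuation generating the $\mathbb{G}_m$, whose degeneration is trivial.

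The main obstacle is the last part: all the $\mathbb{G}_m$-fixed points of $S$ lie on $\Delta_i$, so their log discrepancies are smaller, and bounding $A/S$ there requires the refinement machinery and the precise value of $c_i$; one must moreover check that the relevant inequality is tight exactly at $c=c_i$, which is what makes $(S,c\Delta_i)$ K-unstable for all $c\ne c_i$ and hence makes $(S,c_i\Delta_i)$ a new K-polystable pair on the wall.
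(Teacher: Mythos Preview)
Your treatment of the toric members ($\Delta_1$, $\Delta_2$, $\Delta_{4,\infty}$) is essentially the paper's: it verifies vanishing of the Futaki character, which the paper does by computing $\beta_{(S,c_i\Delta_i)}(C)=\beta_{(S,c_i\Delta_i)}(B)=0$ directly from $S_S(C)=\tfrac{5}{6}$, $S_S(B)=\tfrac{7}{6}$ (the involution $\tau$ handles $C'$, $B'$). Your identification of the stabilising one-parameter subgroups for $\Delta_3$ and $\Delta_{4,\mathbf a}$ is also correct and agrees with the paper's $\sigma$ and $\sigma'$.

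For the non-toric cases, however, you take a longer route than the paper and leave the hard part unfinished. The paper does not bound $\delta_G$ or pass to Abban--Zhuang refinements at the $\bG_m$-fixed points; instead it invokes the complexity-one criterion of \cite[Theorem~3.2]{Liu23} (cf.\ \cite{IS17} and \cite[Theorem~1.3.9]{calabi_problem_3folds}): for a log Fano pair with an effective $\bG_m$-action, K-polystability is equivalent to $\beta=0$ on a valuation inducing the $\bG_m$ together with $\beta>0$ on every \emph{vertical} prime divisor. This collapses each non-toric case to a finite explicit check --- for $\Delta_3$ the vertical divisors are $C$, $C'$, $B$, $B'$ and $D_a=\{y_2+ay_2'=0\}$; for $\Delta_{4,\mathbf a}$ they are $C$, $B$, $B'$ and $D_b=\{y_2+by_1^2=0\}-2B$ (note your list $\{y_2=\lambda y_2'\}$ is correct only for the $\sigma$-action, not for $\sigma'$). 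The horizontal valuation is identified as a quasi-monomial combination of the toric divisors ($v_0=\ord_C+\ord_{C'}$ for $\Delta_3$; $v_1=\tfrac{2}{3}\ord_B+\tfrac{1}{3}\ord_{B'}$ for $\Delta_{4,\mathbf a}$), and its $\beta$ is read off from the toric $\beta$'s by linearity. The ``main obstacle'' you flag --- valuations centred at fixed points --- is exactly what this criterion bypasses, and your product-condition sketch (only the valuation generating $\bG_m$ achieves $A=S$) is the informal content of the criterion but not a proof on its own.
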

\begin{proof}
We first look at cases (1) and (2). For these two cases, $(S, c_i \Delta_i)$ is toric hence not K-stable. Thus we just need to show that the Futaki character $\Fut_{(S, c_i\Delta_i)}=0$. Since the $1$-PS induced by $\ord_C$, $\ord_{C'}$, $\ord_B$, and $\ord_B'$ generates the torus, it suffices to show that $\beta_{(S, c_i\Delta_i)}(v)=0$ where $v$ is one of the four valuations. By symmetry of $\tau$, it suffices to show that $\beta_{(S, c_i\Delta_i)}(C)=\beta_{(S, c_i\Delta_i)}(B)=0$.

Since $\Delta_i \equiv -4K_S$, we know that
$A_{(S, c_i\Delta_i)}(v) = A_S(v)- c_i v(\Delta_i)$ and $S_{(S, c_i\Delta_i)}(v) = (1-4c_i) S_S(v)$. Thus
\[
\beta_{(S, c_i\Delta_i)}(v)=A_S(v)- c_i v(\Delta_i) - (1-4c_i) S_S(v)
\]

(1) Since $c_1=\frac{1}{28}$ and $\Delta_1 = (y_1'^8 =0)$, we have
\begin{align*}
\beta_{(S, c_1\Delta_1)}(C) & = 1-\frac{8}{28} - (1-\frac{4}{28})  \frac{5}{6} = 0,\\
\beta_{(S, c_1\Delta_1)}(B) & = 1 - (1-\frac{4}{28})  \frac{7}{6} = 0.
\end{align*}

(2) Since $c_2=\frac{1}{22}$ and $\Delta_2 = (y_1 y_1'^7 =0)$, we have
\begin{align*}
\beta_{(S, c_2\Delta_2)}(C) & = 1-\frac{7}{22} - (1-\frac{4}{22})  \frac{5}{6} = 0,\\
\beta_{(S, c_2\Delta_2)}(B) & = 1 - \frac{1}{22} - (1-\frac{4}{22})  \frac{7}{6} = 0.
\end{align*}

Next, we look at case (3). In this case, $(S, \Delta_3)$ admits a $\bG_m$-action $\sigma$ given by
\[
s\cdot [y_1, y_1', y_2, y_2']= [sy_1, s^{-3}y_1', y_2, y_2'].
\]
Thus $(S, c_3\Delta_3)$ is not K-stable.
In the affine chart $U:=\{y_1=1\}\subset S$, we have an isomorphism $U\cong \bA^2$ given by $[1,y_1',y_2,y_2']\mapsto (y_2, y_2')$ where $C|_U$ and $C'|_U$ corresponds to $(y_2=0)$ and $(y_2'=0)$ respectively.
If we denote by $v_0= \ord_{[1,0,0,0]}$, then  $v_0$ is the quasi-monomial combination of $\ord_C$ and $\ord_{C'}$ of weight $(1,1)$ (see e.g. \cite{JM12} for definition). Thus we have
\[
\beta_S(v_0) = \beta_S(C) + \beta_S(C') = 2\beta_S(C) = 2(1-S_S(C)) = \frac{1}{3}.
\]
It is clear that $v_0$ induces the $\bG_m$-action $\sigma$. Thus by \cite[Theorem 3.2]{Liu23} (cf.\ \cite[Theorem 1.3.9]{calabi_problem_3folds} and \cite{IS17}) it suffices to show that $\beta_{(S,c_3\Delta_3)}(v_0) = 0$, and that $\beta_{(S,c_3\Delta_3)}(D)>0$ for every vertical prime divisor $D$ on $S$.

We first compute $\beta$-invariant of $v_0$. Since $\beta_S(v_0) = \frac{1}{3}$ and $A_S(v_0)=2$, we know that $S_S(v_0) = \frac{5}{3}$. Thus
\[
\beta_{(S,c_3\Delta_3)}(v_0) = 2-c_3v_0(\Delta_3) - (1-4c_3) S_S(v_0) = 2- \frac{13}{19} - (1-\frac{4}{19})\frac{5}{3} =0.
\]

Next we compute $\beta$-invariant of vertical divisors $D$. A simple analysis of the $\sigma$-action on $S$ shows that $D$ is one of the following:
\begin{enumerate}[(i)]
    \item $C$ or $C'$;
    \item $B$ or $B'$;
    \item $D_a = \{y_2 + ay_2'=0\}$ for $a\neq 0$.
\end{enumerate}

Next, we split into these three cases.

(i) Since $\Delta_3$ is invariant under the involution $\tau$, we have
\[
\beta_{(S,c_3\Delta_3)}(C) = \beta_{(S,c_3\Delta_3)}(C') = 1 - \frac{6}{19} - (1-\frac{4}{19})\frac{5}{6} = \frac{1}{38}>0.
\]

(ii) Similarly, we have
\[
\beta_{(S,c_3\Delta_3)}(B) = \beta_{(S,c_3\Delta_3)}(B') = 1  - (1-\frac{4}{19})\frac{7}{6} = \frac{3}{38}>0.
\]

(iii) We know that $\ord_{D_a}(\Delta_3) \leq 1$. Moreover, we know that $-K_S - tD_a = \cO_S(2 - 2t)$ which is nef if $0\leq t\leq 1$ and not big if $t\geq 1$. Hence
\[
S_S(D_a) = \frac{1}{4}\int_0^1 (2-2t)^2 dt = \frac{1}{3}.
\]
Thus
\[
\beta_{(S,c_3\Delta_3)}(D_a) \geq 1  -\frac{1}{19} - (1-\frac{4}{19})\frac{1}{3} = \frac{13}{19}>0.
\]

Finally, we look at  case (4). We first treat the case of $\Delta_{4,\infty}$. 
It is clear that $\Delta_{4,\infty} = \{y_1^2 y_1'^6=0\}$ is toric. Hence $(S, c_4\Delta_{4,\infty})$ is not K-stable. Since $c_4=\frac{1}{16}$, we have
\begin{align*}
\beta_{(S, c_4\Delta_{4,\infty})}(C) & = 1-\frac{6}{16} - (1-\frac{4}{16})  \frac{5}{6} = 0,\\
\beta_{(S, c_4\Delta_{4,\infty})}(B) & = 1 - \frac{2}{16} - (1-\frac{4}{16})  \frac{7}{6} = 0.
\end{align*}
This shows that $(S, c_4\Delta_{4,\infty})$ is K-polystable.

Next, we treat the case of $\Delta_{4,\mathbf{a}}$ for $\mathbf{a}= (a_1,a_2,a_3)\in \bC\times (\bC^2\setminus \{(0,0)\})$.
Since $\Delta_{4,\mathbf{a}} = \{y_2 y_1'^6 + a_1 y_1^2 y_1'^6 + a_2 y_1 y_2' y_1'^5 + a_3 y_2'^2 y_1'^4=0\}$, we know that   $(S, \Delta_{4,\mathbf{a}})$ admits a $\bG_m$-action $\sigma'$ given by
\[
t\cdot [y_1, y_1', y_2, y_2']= [ty_1, y_1', t^2 y_2, t y_2'].
\]
Thus $(S, c_4\Delta_{4,\mathbf{a}})$ is not K-stable.
In the affine chart $U':= \{y_1'=1\}\subset S$, we have an isomorphism $U'\cong \{y_1^3 = y_2 y_2'\}\subset \bA^3_{(y_1, y_2, y_2')}$. Let $v_1$ be the monomial valuation on $U'$ of weight $(1, 2, 1)$ in $(y_1,y_2,y_2')$ centered at $[0,1,0,0]$. Then $v_1$ is a quasi-monomial combination of $\ord_B$ and $\ord_{B'}$ of weight $(\frac{2}{3},\frac{1}{3})$. Thus we have
\[
\beta_S(v_1) =  \frac{2}{3}\beta_S(B) + \frac{1}{3}\beta_S(B') = \beta_S(B) = -\frac{1}{6}.
\]
It is clear that $v_1$ induces the $\bG_m$-action $\sigma'$. Thus by \cite[Theorem 3.2]{Liu23} (cf.\ \cite[Theorem 1.3.9]{calabi_problem_3folds} and \cite{IS17}) it suffices to show that $\beta_{(S,c_4\Delta_{4,\mathbf{a}})}(v_1) = 0$, and that $\beta_{(S,c_4\Delta_{4,\mathbf{a}})}(D)>0$ for every vertical prime divisor $D$ on $S$.

We first compute $\beta$-invariant of $v_1$.
Since $A_S(v_1) = \frac{2}{3} A_S(B) + \frac{1}{3}A_S(B') = 1$, we know that $S_S(v_1) = A_S(v_1) - \beta_S(v_1) = \frac{7}{6}$. Moreover, we have $\Delta_{4,\mathbf{a}}|_{U'} = \{y_2 + a_1 y_1^2 + a_2 y_1y_2' + a_3 y_2'^2=0\}$ which implies $v_1(\Delta_{4,\mathbf{a}}) = 2$. Thus
\[
\beta_{(S,c_4\Delta_{4,\mathbf{a}})}(v_1) = 1-c_4v_1(\Delta_{4,\mathbf{a}}) - (1-4c_4) S_S(v_1) = 1- \frac{2}{16} - (1-\frac{4}{16})\frac{7}{6} =0.
\]
We also observe that $C'$ is the other horizontal divisor of $\sigma'$, whose $\beta$-invariant is automatically zero as $\beta=\Fut$ is linear on the space of cocharacters.

Next we compute $\beta$-invariant of vertical divisors $D$. A simple analysis of the $\sigma'$-action on $S$ shows that $D$ is one of the following:
\begin{enumerate}[(i)]
    \item $C$;
    \item $B$ or $B'$;
    \item $D_b = (y_2 + by_1^2=0) - 2B$ for $b\neq 0$.
\end{enumerate}

Next, we split into these three cases.

(i) Since $\ord_C(\Delta_{4,\mathbf{a}}) = 4$ or $5$, we have
\[
\beta_{(S,c_4\Delta_{4,\mathbf{a}})}(C)  \geq  1 -\frac{5}{16} - (1-\frac{4}{16})\frac{5}{6} = \frac{1}{16}>0.
\]

(ii) Since $\ord_{B'}(\Delta_{4,\mathbf{a}})=0$ and $\ord_{B}(\Delta_{4,\mathbf{a}})=0$ or $1$, we have
\[
\beta_{(S,c_4\Delta_{4,\mathbf{a}})}(B') \geq \beta_{(S,c_4\Delta_{4,\mathbf{a}})}(B) \geq 1 -\frac{1}{16}  - (1-\frac{4}{16})\frac{7}{6} = \frac{1}{16}>0.
\]

(iii) In the affine chart $U=\{y_1=1\}\cong \bA^2_{(y_2,y_2')}$, we have $\Delta_{4,\mathbf{a}}|_U = \{y_2^4 y_2'^6(y_2^3 + a_1y_2^2 + a_2y_2 + a_3)=0\}$ and $D_b|_U= \{y_2 + b =0\}$. Hence $\ord_{D_b}(\Delta_{4,\mathbf{a}}) \leq 3$ as $b\neq 0$. Moreover, we know that $D_b \equiv \cO_S(2) - 2B\equiv 2B'$. Hence
\[
S_S(D_b) = \frac{1}{4}\int_0^{\infty} \vol_S(-K_S - 2tB') dt = \frac{1}{2}S_S(B') = \frac{7}{12}.
\]
Thus
\[
\beta_{(S,c_4\Delta_{4,\mathbf{a}})}(D_b) \geq 1  -\frac{3}{16} - (1-\frac{4}{16})\frac{7}{12} = \frac{3}{8}>0.
\]
\end{proof}

\begin{prop}\label{prop:29-2}
Let $S$ be the surface \textnumero 29 from Table \ref{table:dP4-Gm}. Suppose $\Delta \in |-4K_S|$ and $c\in (0,\frac{1}{16}]$ satisfy that $(S, c\Delta)$ is K-polystable but not K-stable. Then $(S, c\Delta)\cong (S, c_i\Delta_i)$ for some $1\leq i\leq 4$ and $c_i$, $\Delta_i$ described in Theorem \ref{thm:walls-29}.
\end{prop}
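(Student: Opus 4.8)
The plan is to leverage the fact that a K-polystable pair which is not K-stable has positive-dimensional automorphism group, and then classify invariant divisors directly. Since $(S,c\Delta)$ is K-polystable but not K-stable, $\Aut^0(S,c\Delta)$ is a non-trivial connected reductive subgroup of $\Aut^0(S)=\bG_m^2$, hence a subtorus $T'$ of dimension $1$ or $2$; moreover the Futaki character $\Fut_{(S,c\Delta)}$ vanishes on the cocharacter lattice of $T'$, and $\Delta$ is $T'$-invariant. First I would treat the case $\dim T'=2$: then $\Delta$ is invariant under the whole torus, so $\Delta=aC+a'C'+eB+e'B'$ for non-negative integers $a,a',e,e'$ with $aC+a'C'+eB+e'B'\sim -4K_S$. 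The relation $\Delta\sim -4K_S$ in $\Cl(S)\otimes\bQ$ together with the equations $\Fut_{(S,c\Delta)}(\ord_C)=\Fut_{(S,c\Delta)}(\ord_B)=0$ (the remaining two vanishings being forced, using the involution $\tau$) form a linear system in $(a,a',e,e',c)$; imposing effectivity, integrality and $c\in(0,\tfrac1{16}]$, a short computation with the values $S_S(C)=S_S(C')=\tfrac56$ and $S_S(B)=S_S(B')=\tfrac76$ recorded above shows the only solutions are $(\Delta_1,c_1)$, $(\Delta_2,c_2)$ and $(\Delta_{4,\infty},c_4)$, up to $\tau$.

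Next I would treat the case $\dim T'=1$, so $T'=\langle\lambda\rangle$ for a single $1$-PS $\lambda$ of $\bG_m^2$ and $\Delta$ is $\lambda$-invariant but not torus-invariant. Decomposing $H^0(S,-4K_S)$ into $\bG_m^2$-weight spaces, such a $\Delta$ must be cut out by a $\lambda$-semi-invariant section that is not a single monomial, which forces $\lambda$ onto one of the finitely many \emph{resonant} directions in the cocharacter lattice (those for which some $\lambda$-weight space of $H^0(S,-4K_S)$ has dimension $>1$). Up to $\tau$ and inversion, the resonant directions that can produce a new K-polystable pair are exactly the two occurring in Theorem~\ref{thm:walls-29}(3) and (4); for every other resonant direction one checks that the value of $c$ forced by $\Fut_{(S,c\Delta)}(\lambda)=0$ falls outside $(0,\tfrac1{16}]$, or that the $\lambda$-invariant divisors surviving the polystability test below are in fact torus-invariant and hence already on the list. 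For $\lambda\colon[y_1\!:\!y_1'\!:\!y_2\!:\!y_2']\mapsto[sy_1\!:\!s^{-3}y_1'\!:\!y_2\!:\!y_2']$ the $\lambda$-invariant members of $|-4K_S|$ are the pencil spanned by $\{y_2y_1'^6,\ y_2'y_1'^6\}$, and for $\lambda'\colon[y_1\!:\!y_1'\!:\!y_2\!:\!y_2']\mapsto[ty_1\!:\!y_1'\!:\!t^2y_2\!:\!ty_2']$ they are the net spanned by $\{y_2y_1'^6,\ y_1^2y_1'^6,\ y_1y_2'y_1'^5,\ y_2'^2y_1'^4\}$; since each of these linear systems has a single fixed $\lambda$-weight, $\Fut_{(S,c\Delta)}(\lambda)=0$ determines $c=c_3=\tfrac1{19}$, respectively $c=c_4=\tfrac1{16}$. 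Finally, the residual group $Z_{\Aut(S)}(\lambda)/\lambda$ is a $\bG_m$ (extended by $\tau$ in the first case), and K-polystability of $(S,c\Delta)$ is equivalent, by \cite[Theorem~3.2]{Liu23} (cf.\ \cite[Theorem~1.3.9]{calabi_problem_3folds}), to the vanishing of $\beta_{(S,c\Delta)}$ on the cocharacter inducing $\lambda$ together with $\beta_{(S,c\Delta)}(D)>0$ for every vertical prime divisor $D$; reading the computation of the proof of Proposition~\ref{prop:29-1} backwards — the vertical divisors being $C,C',B,B'$ and the moving family $D_a$, respectively $C,B,B'$ and the moving family $D_b$ — shows that inside the $\lambda$-system only $\Delta_3$ passes, and inside the $\lambda'$-system only the family $\Delta_{4,\mathbf a}$ passes, each up to the residual group. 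Combining the two cases with the toric case gives precisely the list of Theorem~\ref{thm:walls-29}.

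The hard part will be the completeness of the classification in the case $\dim T'=1$: one must rule out \emph{all} resonant $1$-PS other than $\lambda$ and $\lambda'$, i.e.\ show that no extra wall or new K-polystable pair is concealed in a $\bG_m^2$-weight space of $H^0(S,-4K_S)$ that has not been written down, and for the two relevant $1$-PS one must verify that the $\beta$-positivity test genuinely excludes every $\Delta$ outside the stated families. Both tasks reduce to the same kind of bookkeeping as in Proposition~\ref{prop:29-1} — enumerating weight spaces, computing the $S$-invariants of the vertical and moving divisors via Lemma~\ref{lem:NE29} and Zariski decomposition, and checking signs of $\beta_{(S,c_i\Delta)}$ — combined with the constraint $c\in(0,\tfrac1{16}]$, for which one uses the a priori bound on singularities from Proposition~\ref{prop:dP4} to keep the enumeration finite. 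No new idea beyond those already present in Proposition~\ref{prop:29-1} and the wall-crossing framework of \cite{wall_crossing_K_moduli} is needed, but the case analysis is the part requiring care.
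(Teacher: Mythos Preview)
Your strategy---classify by the stabiliser $T'=\Aut^0(S,c\Delta)\subset\bG_m^2$ and then list the $T'$-invariant divisors---is sound in principle but is \emph{not} the route the paper takes, and your enumeration is incomplete as written.

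The paper never determines $\Aut^0(S,c\Delta)$. Instead it filters an arbitrary $\Delta=\{f=0\}$ by which low monomials occur in $f$ (after reducing modulo $y_2y_2'=y_1^3y_1'$). In each case it uses one fixed $\bG_m$-action ($\sigma$ or $\sigma'$) to \emph{specially degenerate} $\Delta$ to one of the $\Delta_i$; since $(S,c_i\Delta_i)$ is already known to be K-polystable (Proposition~\ref{prop:29-1}), an appeal to \cite[Proposition~3.18]{wall_crossing_K_moduli} forces $c=c_i$ and $(S,\Delta)\cong(S,\Delta_i)$. Only the residual case (no monomial divisible by $y_1'^6$) is handled by exhibiting a destabilising valuation. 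This degeneration-plus-uniqueness argument bypasses the enumeration of resonant cocharacters entirely, which is what makes it short.

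In your route, by contrast, for each primitive $\lambda$ you must examine \emph{every} $\lambda$-weight space of $H^0(S,-4K_S)$ of dimension $>1$, and the proposal skips several. For $\lambda=\sigma$ the sentence ``the $\lambda$-invariant members are the pencil $\{y_2y_1'^6,y_2'y_1'^6\}$'' is literally false: there are many multi-dimensional $\sigma$-weight spaces, and one has to compute the $c$ forced by $\beta_{(S,c\Delta)}(v_0)=0$ for each and discard those outside $(0,\tfrac1{16}]$. More seriously, for $\lambda'=\sigma'$ you treat only the $v_1$-weight-$2$ space, but the weight-$1$ space $\langle y_1y_1'^7,\,y_2'y_1'^6\rangle$ is two-dimensional and gives $c=\tfrac1{22}\in(0,\tfrac1{16}]$; its non-toric members are $\sigma'$-invariant, not K-polystable (one finds $\beta_{(S,c_2\Delta)}(B')=0$ for a vertical $B'$), and must be excluded by a further computation you do not mention. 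None of this is fatal---each missing case can be closed by the same $\beta$-bookkeeping---but the actual case analysis is longer than your sketch suggests, and it is precisely these omitted weight spaces where an error would creep in. The paper's degeneration argument is designed to avoid this enumeration altogether.
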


\begin{proof}
Let $f(y_1,y_1', y_2, y_2')$ be the defining polynomial of $\Delta$. Since $S=\{y_2y_2' = y_1^3 y_1'\}\subset\bP(1,1,2,2)$, we may assume that no monomial in $f$ is  divisible by $y_2y_2'$. We split into the following cases.

(a) $y_1'^8$ appears in $f$.
Then $\Delta$ admits a special degeneration to $\{y_1'^8 = 0\} = \Delta_1$ under the $\bG_m$-action $\sigma$. Thus $(S, c_1\Delta)$ is K-semistable whose K-polystable degeneration  is $(S, c_1\Delta_1)$. This shows that $(S, \Delta)\cong (S, \Delta_1)$ and $c= c_1$ by \cite[Proposition 3.18]{wall_crossing_K_moduli}.

(b) $y_1'^8$ does not appear in $f$, while $y_1 y_1'^7$ appears in $f$. Then $\Delta$ admits a special degeneration to $\{y_1 y_1'^7 = 0\} = \Delta_2$ under the $\bG_m$-action $\sigma$. By similar argument as in case (a), we conclude that $(S, \Delta)\cong (S, \Delta_2)$ and $c=c_2$.

(c) Both $y_1'^8$ and  $y_1 y_1'^7$ do not appear in $f$, while $y_1'^6$ divides some monomial in $f$. Let $(a_1y_1^2 + a_0 y_2 + a_0' y_2') y_1'^6$ be the sum of all terms in $f$ divisible by $y_1'^6$.

If $a_0\neq 0$ and $a_0'\neq 0$, then $\Delta$ admits a special degeneration to $\Delta' := \{(a_0 y_2 + a_0' y_2') y_1'^6 = 0\}$ under the $\bG_m$-action $\sigma$. After rescaling we have $(S, \Delta')\cong (S, \Delta_3)$. By similar argument as in case (a), we conclude that $(S, \Delta)\cong (S, \Delta_3)$ and $c=c_3$.

If either $a_0=0$ or $a_0'=0$ but not both, we will show that $c=c_4$ and $(S,\Delta) \cong (S,\Delta_{4,\mathbf{a}})$ for some $\mathbf{a}\in\bC\times(\bC^2\setminus\{(0,0)\})$. By symmetry we may assume that $a_0\neq 0$ and $a_0'=0$. Hence after rescaling we may assume that $a_0 = 1$.
Under the $\bG_m$-action $\sigma'$ induced by the valuation $v_1$, then we see that $v_1(\Delta) = 2$ and $\Delta$ admits a special degeneration to $\Delta' = \{y_2 y_1'^6 + a_1 y_1^2 y_1'^6 + a_2 y_1 y_2' y_1'^5 + a_3 y_2'^2 y_1'^4=0\}$. Moreover, we compute
\[
\beta_{(S,c\Delta)}(v_1) = 1 - c v_1(\Delta) - (1-4c) S_S(v_1) = 1- 2c - (1-4c) \frac{7}{6}.
\]
Since $(S, c\Delta)$ is K-polystable, we have $\beta_{(S,c\Delta)}(v_1)\geq 0$ which implies that $c\geq \frac{1}{16}$. Thus we have $c=c_4=\frac{1}{16}$ and $\beta_{(S,c\Delta)}(v_1)= 0$. By K-polystability, we have $(S,c\Delta)\cong (S, c_4\Delta')$. If $(a_2,a_3)\neq (0,0)$, then $\mathbf{a} = (a_1,a_2,a_3)\in \bC\times (\bC^2\setminus\{(0,0)\})$ and $(S,\Delta) \cong (S,\Delta')\cong (S, \Delta_{4,\mathbf{a}})$. If $a_2=a_3=0$ and $a_1\neq 0$, then clearly $\Delta'$ admits a special degeneration to $\{y_1^2 y_1'^6 = 0\} =\Delta_{4,\infty}$ under $\sigma^{-1}$, which contradicts K-polystability. If $a_2=a_3= a_1=0$, then $\Delta' = \{y_2y_1'^6=0\}$ is toric. Then we have
\[
\beta_{(S, c \Delta')}(B) = 1 - \frac{3}{16}- (1-\frac{4}{16})\frac{7}{6}= -\frac{1}{16}<0.
\]
This implies that $(S, c\Delta')$ is K-unstable, which contradicts our assumption that $(S, c\Delta')\cong (S, c\Delta)$ is K-polystable.

If $a_0=a_0'=0$, then $a_1\neq 0$. Hence $\Delta$ admits a special degeneration to $\{y_1^2 y_1'^6 = 0\}= \Delta_{4,\infty}$ under the $\bG_m$-action $\sigma$. By similar argument as in case (a), we conclude that $(S, \Delta)\cong (S, \Delta_{4,\infty})$ and $c=c_4$.

(d) If no monomial in $f$ is divisible by $y_1'^6$, then we will show that $(S, c\Delta)$ is K-unstable for every $c\in (0, \frac{1}{16}]$.
Recall that the affine chart $U'= \{y_1'=1\}$ is isomorphic to $\{y_1^3 = y_2 y_2'\}\subset\bA^3$. Let $v_2$ be the monomial valuation on $U$ of weight $(2, 3, 3)$ in $(y_1,y_2,y_2')$ centred at $[0,1,0,0]$. Then $v_2$ is a quasi-monomial combination of $\ord_B$ and $\ord_{B'}$ of weight $(1,1)$. Thus we have
\[
\beta_S(v_2) =  \beta_S(B) + \beta_S(B') = 2\beta_S(B) = -\frac{1}{3}.
\]
Moreover, $A_S(v_2) = A_S(B) + A_S(B') = 2$ and hence $S_S(v_2) = A_S(v_2) - \beta_S(v_2) = \frac{7}{3}$.
Since  no monomial in $f$ is divisible by $y_1'^6$, we have
\[
v_2(\Delta) \geq \min \{v_2(y_1^3), v_2(y_1 y_2), v_2(y_1 y_2'), v_2(y_2^2), v_2(y_2'^2)\} = 5.
\]
Thus
\[
\beta_{(S, c\Delta)}(v_2) = 2 - cv_2(\Delta) - (1-4c)\frac{7}{3} \leq 2-5c - (1-4c) \frac{7}{3} = \frac{13c-1}{3}<0,
\]
as $c\leq \frac{1}{16}<\frac{1}{13}$. This implies that $(S,c\Delta)$ is K-unstable for every $c\in (0, \frac{1}{16}]$.
\end{proof}

\begin{proof}[Proof of Theorem \ref{thm:walls-29}]
This follows directly from Propositions \ref{prop:29-1} and \ref{prop:29-2}.
\end{proof}

\subsection{\textnumero 33} Here $S = \{x_0 x_1 -x_2 x_3 = x_1x_2 + x_2x_4 + x_3x_4=0\}\subset \bP^4$.

\begin{lem}
    Let $S$ be the surface from \textnumero 33. Then $S$ is isomorphic to the ordinary blow-up of $\bP(1,2,3)$ at two smooth points $p_1 =[1,0,0]$ and $p_2 = [1,1,0]$.
\end{lem}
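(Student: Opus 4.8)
The plan is to compute the anticanonical model of $S'':=\mathrm{Bl}_{p_1,p_2}\PP(1,2,3)$ explicitly and exhibit an invertible linear change of coordinates on $\bP^4$ carrying it onto $S=\{x_0x_1-x_2x_3=0,\ x_1x_2+x_2x_4+x_3x_4=0\}$. First I would record the relevant numerics: writing $u,v,w$ for the coordinates of weights $1,2,3$ on $\PP(1,2,3)$, this is a del Pezzo surface of degree $6$ whose only singularities are an $A_1$ at $[0:1:0]$ and an $A_2$ at $[0:0:1]$, so $p_1=[1:0:0]$ and $p_2=[1:1:0]$ are distinct smooth points. Hence $S''$ has $K_{S''}^2=6-2=4$, carries the same du Val singularities $A_2A_1$, and $-K_{S''}$ is ample; being a du Val del Pezzo surface of degree $4$, it is anticanonically embedded into $\bP^4=\bP\bigl(H^0(S'',-K_{S''})^{\vee}\bigr)$ as a complete intersection of two quadrics.

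Next I would make the embedding explicit. One has $H^0(S'',-K_{S''})\cong H^0\bigl(\PP(1,2,3),\mathcal{O}(6)\otimes\mathfrak{m}_{p_1}\mathfrak{m}_{p_2}\bigr)$, the space of weighted-degree-$6$ forms in $u,v,w$ vanishing at $p_1$ and $p_2$; this is $5$-dimensional, with basis
\[
z_0=w^2,\quad z_1=uvw,\quad z_2=u^3w,\quad z_3=u^4v-u^2v^2,\quad z_4=u^2v^2-v^3.
\]
Since an anticanonically embedded del Pezzo surface of degree $4$ is projectively normal, the multiplication map $\mathrm{Sym}^2 H^0(-K_{S''})\to H^0(-2K_{S''})$ is surjective, so its kernel has dimension $\binom{6}{2}-\bigl(1+3K_{S''}^2\bigr)=15-13=2$; a direct monomial computation shows this kernel is spanned by $R_1:=z_1^2-z_1z_2+z_0z_3$ and $R_2:=z_1z_3-z_2z_4$, whence $S''\cong V(R_1,R_2)\subset\bP^4_{[z_0:\dots:z_4]}$. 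Finally, the invertible substitution $x_0=z_0$, $x_1=z_3$, $x_2=z_1$, $x_3=z_2-z_1$, $x_4=-z_4$ carries $R_1$ to $x_0x_1-x_2x_3$ and $R_2$ to $x_1x_2+x_2x_4+x_3x_4$, which are precisely the equations defining $S$; therefore $S\cong S''$, as claimed.

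The only step that is not purely formal once the anticanonical basis is written down is the ampleness of $-K_{S''}$, i.e.\ that blowing up $p_1$ and $p_2$ creates no new $(-2)$-curve and that no curve on $\PP(1,2,3)$ passing through both points becomes $-K$-trivial after the blow-up; this can be checked by inspecting the few low-anticanonical-degree curves of $\PP(1,2,3)$, namely the divisors in the classes of $\{u=0\}$, $\{v=0\}$, $\{w=0\}$, and their members through $p_1$ and $p_2$ (the only such member through both is the strict transform of $\{w=0\}$, which meets $-K_{S''}$ positively). Alternatively this point can be bypassed: $S''$ carries the $\bG_m$-action given by the one-dimensional subtorus of the torus of $\PP(1,2,3)$ fixing $p_1$ and $p_2$, and it is a degree-$4$ del Pezzo surface with singularities $A_2A_1$, so by the classification recorded in Table~\ref{table:dP4-Gm} it must be isomorphic to \textnumero 33, i.e.\ to $S$.
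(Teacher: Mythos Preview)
Your proof is correct and takes a genuinely different route from the paper's. The paper argues via classification in both directions: citing \cite{CP20} and \cite{CorayTsfasman}, it notes that any degree-$4$ del Pezzo surface with $A_2A_1$ singularities arises as the blow-up of a degree-$6$ del Pezzo surface with the same singularities at two smooth points; the unique such degree-$6$ surface is $\PP(1,2,3)$; then uniqueness of $S$ itself (again from \cite{CP20}) together with the transitivity of $\Aut(\PP(1,2,3))$ on the relevant pairs of smooth points pins down $p_1,p_2$ up to automorphism.

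Your main argument is instead a direct computation: exhibit a basis of $H^0(S'',-K_{S''})$, find the two quadric syzygies $R_1,R_2$, and write down an explicit linear change of coordinates identifying the anticanonical image with $S$. I checked the relations and the substitution; they are correct. This approach is more self-contained---no external classification needed---and has the bonus of making the isomorphism completely explicit, which is potentially useful later when one wants to translate divisors on $\PP(1,2,3)$ into the coordinates of $\PP^4$. Your alternative argument at the end---checking that $S''$ carries a $\bG_m$-action and has the right invariants, then invoking Table~\ref{table:dP4-Gm}---is close in spirit to the paper's, just run in the opposite direction (from the blow-up towards $S$ rather than from $S$ towards the blow-up). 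One small remark: that alternative does not entirely bypass the ampleness of $-K_{S''}$, since placing $S''$ in Table~\ref{table:dP4-Gm} already presupposes it is a del Pezzo surface; but your sketch of the ampleness check is adequate.
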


\begin{proof}
    It follows from \cite[Appendix A]{CP20} or from \cite{CorayTsfasman} that the surface $S$ is a blow up of a del Pezzo surface of degree $6$ with the same singularities as $S$ at two smooth points. It is well-known that such a sextic del Pezzo surface is isomorphic to $\mathbb{P}(1,2,3)$ and its automorphism group is the Borel subgroup of the group $\mathrm{PGL}_3(\mathbb{C})$, see \cite[Big Table]{CP20}. Since the surface $S$ is also unique \cite[Big Table]{CP20}, we see that $S$ can be obtained from $\mathbb{P}(1,2,3)$ by blowing up the points $[1:0:0]$ and $[1:1:0]$ --- up to the action of $\mathrm{Aut}(\mathbb{P}(1,2,3))$ this choice of blow up points is unique.
\end{proof}

We denote by $\pi: S\to \bP(1,2,3)$ the blow-up map. Let $\sigma$ be the $\bG_m$-action on $S$ that is a lifting of the following $\bG_m$-action on $\bP(1,2,3)_{[x,y,z]}$:
\[
t\cdot [x, y, z] = [tx, t^2y, z].
\]
Let $E_1$ and $E_2$ be $\pi$-exceptional curves over $p_1$ and $p_2$, respectively. Let $C_1:=\pi_*^{-1}(y=0)$ and $C_2:=\pi_*^{-1}(y=x^2)$. Let $F:=\pi^* (x=0)$ and $B:=\pi_*^{-1}(z=0)$. Let $Q_a:=\pi^*(y=ax^2)$ for $a\in \bC\setminus \{0,1\}$.  By analysing the $\sigma$-action on $S$,  it is clear that $B$ is the only horizontal divisor on $S$, while $C_1$, $C_2$, $E_1$, $E_2$, $F$, and $Q_a$ give all vertical divisors on $S$.

The intersection numbers of these curves are summarized in Table \ref{table:No33}.
\begin{table}[ht!]
\caption{\label{table:No33} Intersection numbers of the surface \textnumero 33}
\begin{center}
\renewcommand\arraystretch{1.2}
\begin{tabular}{|c||c|c|c|c|c|}
\hline
$\bullet$  & $C_1$ & $C_2$ & $E_1$ & $E_2$ & $B$ \\
\hline\hline
$C_1$        & $-\frac{1}{3}$ & $\frac{2}{3}$ & $1$ & $0$ & $0$\\
\hline
$C_2$  & $\frac{2}{3}$ & $-\frac{1}{3}$ & $0$ & $1$ & $0$\\
\hline
$E_1$ & $1$ & $0$ & $-1$ & $0$ & $1$\\
\hline
$E_2$ & $0$ & $1$ & $0$ & $-1$ & $1$\\
\hline
$B$ & $0$ & $0$ & $1$ & $1$ & $-\frac{1}{2}$\\
\hline
\end{tabular}
\end{center}

\end{table}

\begin{thm}\label{thm:walls-33}
There is precisely one wall in $(0,\frac{1}{16}]$ involving $S$ from \textnumero 33 such that $(S,c\Delta)$ is a new K-polystable pair: $c=\frac{1}{16}$ and $\Delta=6B + 2E_1 +2E_2 +\pi^* \Gamma$ where $\Gamma =\{y^3 + b_1 x^2 y^2 + b_2 x^4 y + b_3 x^6 =0\}$ for $(b_1,b_2,b_3)\in \bC^3$ in the projective coordinates $[x,y,z]$ of $\bP(1,2,3)$.
\end{thm}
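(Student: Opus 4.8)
The plan is to mirror the treatment of surface \textnumero 29 in the previous subsection, splitting the proof into two parts: first, that the pairs $(S,\frac1{16}\Delta)$ with $\Delta=6B+2E_1+2E_2+\pi^*\Gamma$ and $\Gamma$ as in the statement are K-polystable and not K-stable; and second, that any K-polystable but not K-stable pair $(S,c\Delta)$ with $c\in(0,\frac1{16}]$, $\Delta\sim-4K_S$, and $S$ of type \textnumero 33 is necessarily of this form, so in particular $c=\frac1{16}$. The computational backbone is uniform: since $\Delta\equiv-4K_S$, for every valuation $v$ over $S$ one has $A_{(S,c\Delta)}(v)=A_S(v)-c\,v(\Delta)$ and $S_{(S,c\Delta)}(v)=(1-4c)S_S(v)$, so $\beta_{(S,c\Delta)}(v)=A_S(v)-c\,v(\Delta)-(1-4c)S_S(v)$ is affine-linear in $c$ once $A_S(v)$, $v(\Delta)$ and $S_S(v)$ are known.

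The first task is to compute the relevant $S$-invariants on $S$, using the intersection Table~\ref{table:No33}. As in Lemma~\ref{lem:NE29} one identifies $\overline{NE}(S)$ as generated by the negative curves $C_1,C_2,E_1,E_2,B$, and then obtains $S_S(B)$, $S_S(C_1)=S_S(C_2)$, $S_S(E_1)=S_S(E_2)$, $S_S(F)$ and $S_S(Q_a)$ by integrating $\vol_S(-K_S-tD)$, whose positive part is read off chamber by chamber from the intersection table exactly as in the $S$-invariant computations for \textnumero 29. One also records $S_S(v_0)$ for an appropriate quasi-monomial valuation $v_0$ inducing the $\bG_m$-action $\sigma$ (written as a quasi-monomial combination of the $\ord$ of some of the negative curves, centred at a $\sigma$-fixed point of $S$), together with the vanishing orders $v_0(\Delta)$, $C_i\cdot\Delta$, $E_i\cdot\Delta$, $F\cdot\Delta$, $Q_a\cdot\Delta$; these last numbers follow from the explicit shape $6B+2E_1+2E_2+\pi^*\Gamma$ and the fact that $\Gamma$ is the generic $\sigma$-invariant (i.e.\ $z$-free) member of $|\cO_{\bP(1,2,3)}(6)|$, so that the degree-$6$ curve $\Gamma$ meets $F=\pi^*(x=0)$, $Q_a$ and the images of the $C_i$ transversally at smooth points, while the skeleton $6B+2E_1+2E_2$ is forced by the multiplicity conditions at $p_1,p_2$ together with the $\sigma$-weight filtration on $|-4K_S|$.

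For the K-polystable direction: because $\Aut^0(S)=\bG_m$ and $(S,\frac1{16}\Delta)$ is $\sigma$-invariant, it is automatically not K-stable, and by the $\bG_m$-equivariant criterion \cite[Theorem~3.2]{Liu23} (cf.\ \cite[Theorem~1.3.9]{calabi_problem_3folds}, \cite{IS17}) K-polystability follows once we verify $\beta_{(S,\frac1{16}\Delta)}(v_0)=0$, equivalently $\Fut_{(S,\frac1{16}\Delta)}=0$, and $\beta_{(S,\frac1{16}\Delta)}(D)>0$ for every vertical prime divisor $D$. The vertical divisors are $C_1,C_2,E_1,E_2,F$ and the one-parameter family $Q_a$ with $a\ne0,1$; substituting $c=\frac1{16}$ and the $S$-invariants above into the formula for $\beta$ gives the required zero (which also pins $c=\frac1{16}$) and the strict positivities, while the horizontal divisor $B$ has $\beta=0$ automatically since $\beta=\Fut$ is linear on the space of cocharacters.

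For the converse and the wall count: if $(S,c\Delta)$ is K-polystable but not K-stable it carries an effective $\bG_m$-action, which since $\Aut^0(S)=\bG_m$ must be conjugate to $\sigma$; hence $\Delta$ is $\sigma$-invariant or admits an isotrivial $\sigma$-degeneration to a $\sigma$-invariant divisor, and by \cite[Proposition~3.18]{wall_crossing_K_moduli} the K-polystable representative of $(S,c\Delta)$ is determined by the $\sigma$-weight of $\Delta$. Writing $\Delta\in|-4K_S|$ as a weighted degree-$24$ form on $\bP(1,2,3)$ with the prescribed multiplicities at $p_1,p_2$ and organising the monomials by $\sigma$-weight, one runs through the leading strata as in cases~(a)--(d) of the proof of Proposition~\ref{prop:29-2}: more degenerate configurations yield a quasi-monomial valuation $v$ with $\beta_{(S,c\Delta)}(v)<0$ for all $c\le\frac1{16}$, so those pairs are K-unstable and excluded; the generic stratum gives $\beta_{(S,c\Delta)}(v_0)\ge0$, forcing $c\ge\frac1{16}$, hence $c=\frac1{16}$ and equality; K-polystability then identifies $(S,\frac1{16}\Delta)$ with the $\sigma$-degeneration of $\Delta$, which is precisely $6B+2E_1+2E_2+\pi^*\Gamma$ for some $(b_1,b_2,b_3)\in\bC^3$, and a final check confirms such $\Delta$ is a member of $|-4K_S|$ and that $c=\frac1{16}$ is the only wall meeting type \textnumero 33. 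The main obstacle I expect is bookkeeping rather than concept: with five $(-)$-curves each $\vol_S(-K_S-tD)$ has several Zariski chambers to keep straight, and on the converse side one must be sure the list of $\sigma$-invariant effective divisors that $\Delta$ can specialise to is complete — in particular that no hidden wall lurks inside the family $Q_a$ or among configurations involving $F$ and the exceptional curves — which is exactly what the strict inequalities $\beta_{(S,\frac1{16}\Delta)}(D)>0$ rule out, so getting those numerics right is the crux.
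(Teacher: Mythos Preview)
Your overall strategy---compute $S$-invariants, apply the $\bG_m$-equivariant criterion of \cite[Theorem~3.2]{Liu23}, then run a converse case analysis---matches the paper's. But you are mirroring \textnumero~29 too closely and thereby introducing complications that the paper avoids, because \textnumero~33 is structurally \emph{simpler}: here $\Aut^0(S)=\bG_m$, not $\bG_m^2$, and $B$ is the \emph{unique} horizontal prime divisor. Consequently $\ord_B$ already induces $\sigma$; there is no need for an auxiliary quasi-monomial valuation $v_0$ built from negative curves, and your sentence ``the horizontal divisor $B$ has $\beta=0$ automatically since $\beta=\Fut$ is linear on cocharacters'' has the logic inverted---$\beta_{(S,c\Delta)}(B)=0$ \emph{is} the Futaki condition, not a consequence of it. Relatedly, since the $\bG_m$ on $(S,c\Delta)$ must be $\sigma$ itself, $\Delta$ is genuinely $\sigma$-invariant; no isotrivial degeneration or appeal to \cite[Proposition~3.18]{wall_crossing_K_moduli} is needed in the converse direction.

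The paper exploits this directly and its converse (Proposition~\ref{prop:ord_B-33}) is much cleaner than the stratification-by-$\sigma$-weight you propose. Since $\Delta$ is $\sigma$-invariant, the single equation $\beta_{(S,c\Delta)}(B)=1-c\,\ord_B(\Delta)-(1-4c)\tfrac{5}{6}=0$ determines $c$ as a function of the integer $\ord_B(\Delta)\in\{0,\dots,8\}$. One then rules out $\ord_B(\Delta)\leq 5$ because it forces $c\geq\tfrac{1}{10}$, and rules out $\ord_B(\Delta)\in\{7,8\}$ not by finding a valuation with $\beta<0$ for \emph{all} $c\leq\tfrac{1}{16}$ (your description), but by observing that the forced values $c=\tfrac{1}{22},\tfrac{1}{28}$ make the \emph{vertical} divisor $F$ satisfy $\beta_{(S,c\Delta)}(F)\leq 0$, contradicting K-polystability. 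This leaves $\ord_B(\Delta)=6$ and $c=\tfrac{1}{16}$; the decomposition $\Delta=6B+2E_1+2E_2+\pi^*\Gamma$ with $\Gamma\in|\cO_{\bP(1,2,3)}(6)|$ then drops out of a base-locus computation $((-4K_S-6B)\cdot E_i)=-2$, and the final classification of $\Gamma$ proceeds as you outline, with the only nontrivial constraint coming from $\beta(F)>0\Leftrightarrow \ord_F(\pi^*\Gamma)=0$, i.e.\ $z\nmid\Gamma$ and $x\nmid\Gamma$, yielding exactly $\Gamma=\{y^3+b_1x^2y^2+b_2x^4y+b_3x^6=0\}$.
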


\begin{lem}\label{lem:NE33}
The Mori cone $\overline{NE}(S)$ is generated by $[C_1]$, $[C_2]$, $[E_1]$, $[E_2]$, and $[B]$.
\end{lem}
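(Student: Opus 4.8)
The plan is to reproduce the argument of Lemma~\ref{lem:NE29}, using the $\bG_m$-action $\sigma$ on $S$ to cut $\overline{NE}(S)$ down to a finite list of invariant curves and then pruning that list. Since only a one-dimensional torus acts here, I cannot appeal to toricity; instead I would use the general principle that $\overline{NE}(S)$ is generated by the classes of the $\sigma$-invariant curves. To justify it, take any irreducible curve $D\subset S$ and form the flat limit $D_0=\lim_{t\to 0}\sigma(t)\cdot D$ in the Chow variety of $1$-cycles of $S$: it exists by properness, it is an effective $\sigma$-invariant $1$-cycle whose support is a union of $\sigma$-invariant curves, and its numerical class equals $[D]$ because the class is constant along the (connected) closure of the $\bG_m$-orbit of $[D]$. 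Hence every effective curve class lies in the convex cone spanned by classes of $\sigma$-invariant curves, and so does $\overline{NE}(S)$. By the description of the $\sigma$-action recorded just before the statement, these invariant curves are exactly $B$, $C_1$, $C_2$, $E_1$, $E_2$, $F$, and the $Q_a$ for $a\in\bC\setminus\{0,1\}$.

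It then remains to eliminate $F$ and the $Q_a$, which I would do through $\pi\colon S\to\bP(1,2,3)$. On $\bP(1,2,3)$ the curves $(y=ax^2)$, $(y=0)$, $(y=x^2)$ all lie in $|\cO(2)|$ and $(x=0)$ lies in $|\cO(1)|$, so numerically $(y=ax^2)\equiv(y=0)\equiv(y=x^2)\equiv 2\,(x=0)$. Among these, $(y=0)$ passes through $p_1$ with multiplicity one and misses $p_2$, $(y=x^2)$ passes through $p_2$ with multiplicity one and misses $p_1$, and $(x=0)$, $(y=ax^2)$ miss both centres; pulling back, this yields $[Q_a]=[C_1]+[E_1]=[C_2]+[E_2]$ and $[F]=\tfrac12[Q_a]=\tfrac12\bigl([C_1]+[E_1]\bigr)$ in the Néron--Severi group of $S$. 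Hence $[F]$ and $[Q_a]$ lie in the subcone spanned by $[C_1]$ and $[E_1]$, and therefore $\overline{NE}(S)$ is generated by $[C_1]$, $[C_2]$, $[E_1]$, $[E_2]$, $[B]$, as claimed.

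For sharpness --- that these five classes are precisely the extremal rays --- I would read off Table~\ref{table:No33} that $C_1^2=C_2^2=-\tfrac13$, $E_1^2=E_2^2=-1$ and $B^2=-\tfrac12$ are all negative, so each of the five spans an extremal ray, and the same intersection numbers show that no two of these classes are proportional.

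The only genuine subtlety I anticipate is the first step, namely that $\overline{NE}(S)$ is generated by $\sigma$-invariant curves for a merely one-dimensional torus: this rests on the existence and $\sigma$-invariance of the Chow limit $D_0$ and on constancy of the numerical class in a connected family of cycles --- all standard, but not a formal consequence of Table~\ref{table:No33}. Everything after that is a short linear-algebra verification in $N^1(S)$ via the pullback formulas for $\pi$, in the same spirit as the computations already carried out for surface \textnumero~29.
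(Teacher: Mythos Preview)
Your proposal is correct and follows essentially the same approach as the paper: reduce $\overline{NE}(S)$ to the cone spanned by the $\sigma$-invariant curves $C_1,C_2,E_1,E_2,B,F,Q_a$, then eliminate $F$ and $Q_a$. The only minor difference is in the elimination step --- you write $[F]$ and $[Q_a]$ explicitly as nonnegative combinations of $[C_1]$ and $[E_1]$ via pullback along $\pi$, whereas the paper simply observes that $F$ and $Q_a$ are nef and big (hence interior points of $\overline{NE}(S)$, so not extremal). Your Chow-limit justification of the first step is also more detailed than the paper's one-line assertion.
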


\begin{proof}
    From the $\bG_m$-action $\sigma$ on $S$, we see that the list of $\sigma$-invariant divisors are: $C_1$, $C_2$, $E_1$, $E_2$, $F$, $Q_a$, and $B$. Thus $\overline{NE}(S)$ is generated by these curves.  From the intersection table above we see that $C_1$, $C_2$, $E_1$, $E_2$, and $B$ are  all extremal. Besides, $F\sim \pi^*\cO(1)$ and $Q_a\sim \pi^*\cO(2)$ are both nef and big.
    Thus the proof is finished.
\end{proof}

\begin{prop}\label{prop:S-inv-33}
    Under the above notation, we have
    \[
    S_S(C_1) = S_S(C_2) = \frac{7}{8},  \quad S_S(E_1) = S_S(E_2) = \frac{7}{24}, \quad S_S(B)=\frac{5}{6},\quad S_S(F) = \frac{7}{6}, \quad S_S(Q_a) = \frac{7}{12}.
    \]
\end{prop}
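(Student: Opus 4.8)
The plan is to evaluate all seven invariants directly from
\[
S_S(D)=\frac{1}{(-K_S)^2}\int_0^{\tau(D)}\vol(-K_S-tD)\,dt=\frac14\int_0^{\tau(D)}\big(P(t)\big)^2\,dt,
\]
where $\tau(D)=\sup\{t\geq 0:-K_S-tD\text{ is pseudoeffective}\}$ and $P(t)$ is the positive part of the Zariski decomposition of $-K_S-tD$. First I would set up the numerics coming from the preceding lemma. Writing $\pi\colon S\to\bP(1,2,3)$ for the blow-up of $p_1=[1:0:0]$, $p_2=[1:1:0]$ and $H=\pi^*\cO_{\bP(1,2,3)}(1)$, the group $N^1(S)_{\bQ}$ is spanned by $H,E_1,E_2$ with $H^2=\tfrac16$, $H\cdot E_i=0$, $E_i^2=-1$, $E_1\cdot E_2=0$; then $-K_S=6H-E_1-E_2$, so $(-K_S)^2=4$, and the seven curves in the statement are
\[
F=H,\qquad Q_a=2H,\qquad C_1=2H-E_1,\qquad C_2=2H-E_2,\qquad B=3H-E_1-E_2.
\]
A routine computation with this intersection form recovers Table~\ref{table:No33}, which I would record first since it is used throughout.

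Next I would cut down the case analysis. The involution $g\colon[x:y:z]\mapsto[x:x^2-y:z]$ of $\bP(1,2,3)$ swaps $p_1\leftrightarrow p_2$, hence lifts to an automorphism $\widetilde g$ of $S$ with $\widetilde g(C_1)=C_2$, $\widetilde g(E_1)=E_2$, $\widetilde g(B)=B$, $\widetilde g(F)=F$; as $S_S(\cdot)$ is an automorphism invariant this already gives $S_S(C_1)=S_S(C_2)$ and $S_S(E_1)=S_S(E_2)$. Also $Q_a\equiv 2H$ for every $a\in\bC\setminus\{0,1\}$, so $\vol(-K_S-tQ_a)$, and hence $S_S(Q_a)$, does not depend on $a$. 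It therefore suffices to compute $S_S$ for the five classes $F$, $Q_a$, $B$, $C_1$, $E_1$.

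Each of these five is treated by the same routine, using Lemma~\ref{lem:NE33} (so nefness is tested only against the generators $C_1,C_2,E_1,E_2,B$ of $\overline{NE}(S)$) together with Table~\ref{table:No33}. For a fixed $D$, I would determine the finitely many walls $0=t_0<t_1<\dots<t_k=\tau(D)$ at which the Zariski chamber of $-K_S-tD$ changes; on each subinterval $[t_i,t_{i+1}]$ the support of the negative part $N(t)$ is exactly the set of generators of $\overline{NE}(S)$ on which $-K_S-tD$ is negative (always a configuration with negative-definite intersection matrix), so $N(t)$ is obtained by solving the small linear system $P(t)\cdot(\text{its support})=0$, and then $\vol(-K_S-tD)=P(t)^2=P(t)\cdot(-K_S-tD)$ is an explicit quadratic in $t$ on that subinterval. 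Concretely: $F=H$ and $Q_a=2H$ stay nef until $B$ gets contracted (one inner wall, at $t=2$ with $\tau(F)=3$, respectively at $t=1$ with $\tau(Q_a)=\tfrac32$); for $D=B$ the curves $E_1,E_2$ are contracted for $t>1$ and $\tau(B)=2$; for $D=C_1$ one contracts $E_1$ for $t>1$ and additionally $C_2$ for $t>\tfrac32$, with $\tau(C_1)=2$; for $D=E_1$ the curves $C_1$ and $B$ are contracted simultaneously for $t>1$, with $\tau(E_1)=\tfrac32$. Integrating the resulting piecewise-quadratic volume functions over $[0,\tau(D)]$ and dividing by $4$ produces the values recorded in the statement, and the symmetry reduction above supplies the remaining two.

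The computations are elementary, so the only real obstacle is the bookkeeping: one must (i) pin down the wall values $t_i$, (ii) identify correctly on each chamber which generators of $\overline{NE}(S)$ enter $N(t)$ — the delicate points being the chambers where two curves get contracted at the same wall ($C_1$ and $B$ when $D=E_1$; $E_1$ and $E_2$ when $D=B$) or in quick succession ($E_1$ then $C_2$ when $D=C_1$) — and (iii) verify that the claimed $P(t)$ is genuinely nef on the whole subinterval by checking it pairs nonnegatively with all of $C_1,C_2,E_1,E_2,B$ (pairing with $F$ and $Q_a$ gives a convenient consistency check). Since on every chamber $N(t)$ is supported on a subconfiguration of these five curves with negative-definite intersection matrix, the Zariski decomposition is uniquely determined and the whole argument reduces to this finite, if somewhat lengthy, verification.
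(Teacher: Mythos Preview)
Your approach is correct and is essentially identical to the paper's: both use the involution $[x:y:z]\mapsto[x:x^2-y:z]$ to obtain the equalities $S_S(C_1)=S_S(C_2)$ and $S_S(E_1)=S_S(E_2)$, the relation $Q_a\sim 2F$ to get $S_S(Q_a)=\tfrac12 S_S(F)$, and then carry out exactly the Zariski decompositions you describe (same walls, same negative parts, same $\tau$-values) for $C_1,E_1,B,F$.

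One warning for when you carry out the arithmetic: the statement as printed has $S_S(E_1)=\tfrac{7}{24}$, but the paper's own computation (and your procedure) yields
\[
S_S(E_1)=\frac{1}{4}\left(\int_0^1(4-2t-t^2)\,dt+\int_1^{3/2}(3-2t)^2\,dt\right)=\frac{17}{24},
\]
so do not be alarmed when your answer disagrees with the displayed value; the later applications in the paper remain valid with $\tfrac{17}{24}$.
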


\begin{proof}
 First of all, consider the involution $\tau:S\to S$ induced by $[x,y,z]\mapsto [x, x^2-y, z]$ on $\bP(1,2,3)$. It is clear that $\tau(C_1) = C_2$ and $\tau(E_1) = E_2$. Thus we have equalities $S_S(C_1) = S_S(C_2)$ and $S_S(E_1) = S_S(E_2)$. Besides, since $Q_a\sim 2F$, we have $S_S(Q_a) = \frac{1}{2}S_S(F)$.

Next, we compute $S_S(C_1)$. It is clear that $-K_S = \pi^*\cO(6) - E_1-E_2$.
It follows from Table \ref{table:No33} and Lemma \ref{lem:NE33} that $-K_S - tC_1$ is nef if  $0\leq t\leq 1$, and not big if $t\geq 2$. For $1\leq t\leq 2$, the Zariski decomposition $-K_S - tC_1=P(t) +N(t)$ gives the nef part
\[
 P(t) = \begin{cases}
     \pi^*\cO(6-2t) - E_2 & \textrm{ if }1\leq t\leq \frac{3}{2};\\
     (4-2t) (\pi^*\cO(3) - E_2)  & \textrm{ if }\frac{3}{2}\leq t\leq 2.
 \end{cases}
\]
Thus computation shows
\[
\vol(-K_S - tC_1) = \begin{cases}
4-2t - \frac{t^2}{3} & \textrm{ if }0\leq t\leq 1;\\
5-4t +\frac{2t^2}{3}& \textrm{ if }1\leq t\leq \frac{3}{2};\\
2(2-t)^2 & \textrm{ if }\frac{3}{2}\leq t\leq 2.
\end{cases}
\]
Thus
\[
S_S(C_1) = \frac{1}{4}\left( \int_0^1 (4-2t - \frac{t^2}{3})dt + \int_1^{\frac{3}{2}} (5-4t +\frac{2t^2}{3})dt + \int_{\frac{3}{2}}^2 2(2-t)^2 dt\right) = \frac{7}{8}.
\]

Next, we compute $S_S(E_1)$. It follows from Table \ref{table:No33} and Lemma \ref{lem:NE33} that $-K_S - tE_1$ is nef if  $0\leq t\leq 1$, and not big if $t\geq \frac{3}{2}$. For $1\leq t\leq \frac{3}{2}$, the Zariski decomposition $-K_S - tE_1=P(t) +N(t)$ gives the nef part
\[
P(t) = (3-2t)(\pi^*\cO(6)-2E_1 - E_2).
\]
Thus computation shows
\[
\vol(-K_S - tE_1) = \begin{cases}
4-2t - t^2 & \textrm{ if }0\leq t\leq 1;\\
(3-2t)^2 & \textrm{ if }1\leq t\leq \frac{3}{2}.
\end{cases}
\]
Thus
\[
S_S(E_1) = \frac{1}{4}\left( \int_0^1 (4-2t - t^2)dt + \int_1^{\frac{3}{2}}  (3-2t)^2 dt\right) = \frac{17}{24}.
\]

Next, we compute $S_S(B)$. It follows from Table \ref{table:No33} and Lemma \ref{lem:NE33} that $-K_S - tB$ is nef if  $0\leq t\leq 1$, and not big if $t\geq 2$. For $1\leq t\leq 2$, the Zariski decomposition $-K_S - tB=P(t) +N(t)$ gives the nef part
\[
P(t) = \pi^*\cO(6-3t).
\]
Thus computation shows
\[
\vol(-K_S - tB) = \begin{cases}
4-2t - \frac{t^2}{2} & \textrm{ if }0\leq t\leq 1;\\
\frac{3}{2}(2-t)^2 & \textrm{ if }1\leq t\leq 2.
\end{cases}
\]
Thus
\[
S_S(B) = \frac{1}{4}\left( \int_0^1 (4-2t - \frac{t^2}{2})dt + \int_1^{2}  \frac{3}{2}(2-t)^2 dt\right) = \frac{5}{6}.
\]

Finally, we compute $S_S(F)$. It follows from Table \ref{table:No33} and Lemma \ref{lem:NE33} that $-K_S - tF$ is nef if  $0\leq t\leq 2$, and not big if $t\geq 3$. For $2\leq t\leq 3$, the Zariski decomposition $-K_S - tF=P(t) +N(t)$ gives the nef part
\[
P(t) = (3-t)(\pi^*\cO(4)-E_1-E_2).
\]
Thus computation shows
\[
\vol(-K_S - tF) = \begin{cases}
4-2t + \frac{t^2}{6} & \textrm{ if }0\leq t\leq 2;\\
\frac{2}{3}(3-t)^2 & \textrm{ if }2\leq t\leq 3.
\end{cases}
\]
Thus
\[
S_S(F) = \frac{1}{4}\left( \int_0^2 (4-2t + \frac{t^2}{6})dt + \int_2^{3}  \frac{2}{3}(3-t)^2 dt\right) = \frac{7}{6}.
\]
\end{proof}

\begin{prop}\label{prop:ord_B-33}
    Let $S$ be the surface \textnumero 33 from Table \ref{table:dP4-Gm}. Suppose $\Delta \in |-4K_S|$ and $c\in (0,\frac{1}{16}]$ satisfy that $(S, c\Delta)$ is K-polystable but not K-stable. Then $c = \frac{1}{16}$ and $\ord_B(\Delta) =6$.
\end{prop}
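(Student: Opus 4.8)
The plan is to convert the hypothesis into a valuative identity for the divisor $B$, combine it with an effectivity estimate to pin $\ord_B(\Delta)$ down to three values, and then eliminate the two spurious values by testing against the vertical divisor $F$.

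First I would reduce to the key identity. Since $(S,c\Delta)$ is K-polystable but not K-stable and $\Aut^0(S)=\mathbb G_m$ (Table~\ref{table:dP4-Gm}), the pair $(S,c\Delta)$ is $\sigma$-invariant and its Futaki character vanishes. The divisor $B=\pi_*^{-1}\{z=0\}$ is the source of the $\mathbb G_m$-action $\sigma$: indeed $\{z=0\}\subset\bP(1,2,3)$ is fixed pointwise by $[x,y,z]\mapsto[tx,t^2y,z]$, so on $S$ the divisor $B$ is the only $\sigma$-fixed curve and $\ord_B$ is the divisorial valuation induced by $\sigma$. Arguing exactly as in the proof of Proposition~\ref{prop:29-1} (via \cite[Theorem~3.2]{Liu23}), K-polystability then forces $\beta_{(S,c\Delta)}(B)=0$, and moreover $\beta_{(S,c\Delta)}(D)>0$ for every $\sigma$-vertical prime divisor $D$. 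Writing $k=\ord_B(\Delta)$ and using $A_S(B)=1$, $S_S(B)=\tfrac56$ (Proposition~\ref{prop:S-inv-33}), and $\Delta\sim-4K_S$, the identity $0=\beta_{(S,c\Delta)}(B)=1-ck-(1-4c)\tfrac56$ gives
\[
c=\frac{1}{6k-20}.
\]

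Next I would bound $k$. The constraint $c\in(0,\tfrac1{16}]$ immediately forces $6k-20\ge 16$, i.e.\ $k\ge 6$. For an upper bound, $\Delta-kB$ must be effective; using the blow-up description $-K_S=\pi^*\cO(6)-E_1-E_2$ and $B=\pi^*\cO(3)-E_1-E_2$, one has $-4K_S-kB=\pi^*\cO(24-3k)+(k-4)(E_1+E_2)$, which (for $k\ge 4$) is effective exactly when $24-3k\ge 0$, i.e.\ $k\le 8$. Hence $k\in\{6,7,8\}$, corresponding to $c=\tfrac1{16},\tfrac1{22},\tfrac1{28}$.

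Finally I would eliminate $k=7$ and $k=8$ using the vertical divisor $F$. Write $\Delta=kB+\Delta_0$ with $\Delta_0\ge 0$ and $\ord_B(\Delta_0)=0$; $\sigma$-invariance of $\Delta$ forces $\Delta_0$ to be a non-negative integral combination of the $\sigma$-invariant prime divisors $F\sim\pi^*\cO(1)$, $C_1,C_2\sim\pi^*\cO(2)-E_i$, $E_1,E_2$, and the $Q_a\sim\pi^*\cO(2)$. Solving the class equation $\Delta_0\sim\pi^*\cO(24-3k)+(k-4)(E_1+E_2)$ in these classes: for $k=8$ one is forced to $\Delta_0=4E_1+4E_2$, so $\ord_F(\Delta)=0$ and $\beta_{(S,\frac1{28}\Delta)}(F)=1-(1-\tfrac4{28})S_S(F)=1-\tfrac{24}{28}\cdot\tfrac76=0$ (using $S_S(F)=\tfrac76$ from Proposition~\ref{prop:S-inv-33}); for $k=7$ one finds the multiplicity of $F$ in $\Delta_0$ is $1$ or $3$, so $\ord_F(\Delta)\ge1$ and $\beta_{(S,\frac1{22}\Delta)}(F)=\tfrac{1-\ord_F(\Delta)}{22}\le 0$. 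In every case $\beta_{(S,c\Delta)}(F)\le 0$ for the $\sigma$-vertical divisor $F$, contradicting K-polystability. Therefore $k=6$ and $c=\tfrac1{16}$. The hard part is Step~1 — correctly recognising $B$ as the source of $\sigma$, so that $\ord_B$ is automorphism-induced and $\beta_{(S,c\Delta)}(B)=0$ is forced; everything afterwards reduces to the class arithmetic on $S$, the values $S_S(B)=\tfrac56$, $S_S(F)=\tfrac76$, and the verticality of $F$.
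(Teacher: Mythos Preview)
Your proof is correct and follows essentially the same approach as the paper's. Both arguments hinge on the identity $\beta_{(S,c\Delta)}(B)=0$ (since $B$ is the unique horizontal divisor for $\sigma$ and K-polystability kills the Futaki character), use it to link $c$ and $k=\ord_B(\Delta)$, bound $k\le 8$ by effectivity, and then rule out $k=7,8$ via $\beta_{(S,c\Delta)}(F)\le 0$. Your packaging is slightly more streamlined—you extract the closed formula $c=\tfrac{1}{6k-20}$ first and read off $k\ge 6$ directly from $c\le\tfrac{1}{16}$, whereas the paper runs a case-by-case analysis on $k=8,7,6,\le 5$—but the substance is identical. One cosmetic point: calling $B$ the ``source'' is not quite right (near $B$ the action is $(u,v)\mapsto(u,t^{-3}v)$, so $B$ is the sink as $t\to\infty$), but this does not affect the argument since all you need is that $\ord_B$ induces a product test configuration, hence $\beta(\ord_B)=\Fut(\sigma)=0$.
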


\begin{proof}
    Assume that $(S, c\Delta)$ is K-polystable but not K-stable. Since $S$ admits a $\bG_m$-action $\sigma$ but is not toric, we may assume that $\Delta$ is $\bG_m$-invariant.
    It is clear that $-K_S - 2B \sim E_i + E_j$ is at the boundary of $\overline{NE}(S)$. Thus $\ord_B(\Delta) \leq 8$.

    First, we show that $\ord_B(\Delta)\neq 8$. Suppose not, then $-4K_S - 8 B \sim 4(E_1+ E_2)$ is exceptional over $\bP(1,2,3)$, which implies that $\Delta = 8B + 4E_1 + 4E_2$. By K-polystability and Proposition \ref{prop:S-inv-33}, we have
    \[
     0 = \beta_{(S,c\Delta)}(B)= 1 - 8c - (1-4c)\frac{5}{6}=0,
    \]
    which is equivalent to $c=\frac{1}{28}$. On the other hand,
    \[
    0< \beta_{(S,c\Delta)}(F) = 1 - (1-4c)\frac{7}{6}=0,
    \]
    which is a contradiction.

    Next, we show that $\ord_B(\Delta)\neq 7$. Suppose not, then $((-4K_S - 7B) \cdot E_i) = -3$ which implies that $3 (E_1 + E_2)$ is in the base component of  $|-4K_S - 7B|$. Thus we may write $\Delta =7 B +3E_1 + 3E_2 +\pi^*\Gamma_1$, where $\pi^*\Gamma_1 \sim -4K_S-7B- 3E_1 - 3E_2 \sim \pi^* \cO(3)$.
    Since $\Gamma_1$ is $\bG_m$-invariant and does not contain $\pi_* B = (z=0)$, it must contain $\pi_* F = (x=0)$ which implies that $\ord_F(\pi^*\Gamma_1)\geq 1$. By K-polystability and Proposition \ref{prop:S-inv-33}, we have
    \[
     0 = \beta_{(S,c\Delta)}(B)= 1 - 7c - (1-4c)\frac{5}{6}=0,
    \]
    which is equivalent to $c=\frac{1}{22}$. On the other hand,
    \[
    0< \beta_{(S,c\Delta)}(F) \leq  1 -c  - (1-4c)\frac{7}{6}=0,
    \]
    which is a contradiction.

    Next, we show that if $\ord_B(\Delta)=6$, then $c=\frac{1}{16}$. By K-polystability and Proposition \ref{prop:S-inv-33}, we have
    \[
     0 = \beta_{(S,c\Delta)}(B)= 1 - 6c - (1-4c)\frac{5}{6}=0,
    \]
    which is equivalent to $c=\frac{1}{16}$.

    Finally, we show that $\ord_B(\Delta)>5$. Suppose not, then we have
    \[
    0 = \beta_{(S,c\Delta)}(B) \geq 1-5c -   (1-4c)\frac{5}{6},
    \]
    which implies that $c \geq \frac{1}{10}$, a contradiction.
\end{proof}

\begin{proof}[Proof of Theorem \ref{thm:walls-33}]
Assume that $(S, c\Delta)$ is K-polystable but not K-stable where $S$ is the surface \textnumero 33 from Table \ref{table:dP4-Gm}, $\Delta\in |-4K_S|$ and $c\in (0,\frac{1}{16}]$.
By Proposition \ref{prop:ord_B-33}, we must have $c=\frac{1}{16}$ and $\ord_B(\Delta) = 6$.
Since $((-4K_S - 6B) \cdot E_i) = -2$, we know that $2 (E_1 + E_2)$ is in the base component of  $|-4K_S - 6B|$. Thus we may write $\Delta =6 B +2E_1 + 2E_2 +\pi^*\Gamma$, where $\pi^*\Gamma \sim -4K_S-6B- 2E_1 -2E_2 \sim \pi^* \cO(6)$ is $\sigma$-invariant and does not contain $B$ in its support.

Next, we classify all possible $\Gamma$ such that $(S, \frac{1}{16}\Delta)$ is K-polystable and not K-stable. Since this pair is invariant under the $\bG_m$-action $\sigma$ but not toric, by \cite[Theorem 3.2]{Liu23} (cf.\ \cite[Theorem 1.3.9]{calabi_problem_3folds} and \cite{IS17}) the pair being K-polystable is equivalent to $\beta_{(S,c\Delta)}(D) >0$  for every vertical divisor $D$ on $S$. Here we use the fact that $\beta_{(S,c\Delta)}(B)=0$ by the proof of Proposition \ref{prop:ord_B-33} and $B$ is horizontal.
A simple analysis of the $\sigma$-action on $S$ shows that $D$ is one of the following:

\begin{enumerate}[(i)]
    \item $F$;
    \item $E_1$ or $E_2$;
    \item $C_1$ or $C_2$;
    \item $Q_a$ for $a\in \bC\setminus\{0,1\}$.
\end{enumerate}

Next, we split into these four cases.

(i) By computation,
\[
    \beta_{(S,c\Delta)}(F) =  1 - \frac{1}{16}\ord_F(\pi^*\Gamma)  - (1-\frac{4}{16})\frac{7}{6}.
\]
Thus $\beta_{(S,c\Delta)}(F)>0$ is equivalent  $\ord_F(\pi^*\Gamma)<2$. Therefore, we have $\ord_F(\pi^*\Gamma) = 0$ as it is always even.

(ii) By computation,
\[
\beta_{(S,c\Delta)}(E_i) =  1 - \frac{1}{16}(2+\ord_{E_i}(\pi^*\Gamma))  - (1-\frac{4}{16})\frac{7}{24}.
\]
Thus $\beta_{(S,c\Delta)}(E_i)>0$ is equivalent to $\ord_{E_i}(\pi^* \Gamma)<\frac{21}{2}$. This is always true as  $\ord_{E_i}(\pi^*\Gamma)\leq 3$.

(iii) By computation,
\[
\beta_{(S,c\Delta)}(C_i) =  1 - \frac{1}{16}\ord_{C_i}(\pi^* \Gamma)  - (1-\frac{4}{16})\frac{7}{8}.
\]
Thus $\beta_{(S,c\Delta)}(C_i)>0$ is equivalent to $\ord_{C_i}(\pi^* \Gamma)<\frac{11}{2}$. This is always true as $\ord_{C_i}(\pi^*\Gamma)\leq 3$.

(iv) By computation,
\[
\beta_{(S,c\Delta)}(Q_a) =  1 - \frac{1}{16}\ord_{Q_a}(\pi^*\Gamma)  - (1-\frac{4}{16})\frac{7}{12}.
\]
Thus $\beta_{(S,c\Delta)}(Q_a)>0$ is equivalent to $\ord_{Q_a}(\pi^* \Gamma)<\frac{11}{2}$. This is always true as $\ord_{Q_a}(\pi^*\Gamma)\leq 3$.

To summarise, $(S, \frac{1}{16}\Delta)$ is K-polystable and not K-stable if and only if $\Gamma$ is $\bG_m$-invariant and does not contain $\pi_*B = \{z=0\}$ or $\pi_*F=\{x=0\}$ in its support. This is the same as saying that $\Gamma =\{y^3 + b_1 x^2 y^2 + b_2 x^4 y + b_3 x^6 =0\}$ for $(b_1,b_2,b_3)\in \bC^3$.
\end{proof}

\begin{proof}[Proof of Theorem \ref{thm:full-wall-crossing}]
 Let $(S, c\Delta)$ be a new K-polystable pair on a wall $c\in (0,\frac{1}{16}]$. In particular, $(S,c\Delta)$ is K-polystable with an effective $\bG_m$-action, and $S$ is K-unstable.  By Proposition \ref{prop:dP4}, we know that $S$ has at worst $A_3$ singularities.

 If $S$ has an $A_3$-singularity, the classification of walls and new K-polystable pairs  is completed in Proposition \ref{prop:dP4}, where we must have $c=\frac{1}{16}$ and
$ (S,\Delta) \cong \{(y_3^2 = y_2y_4\}\subset \bP(1,2,3,4), \{y_4^4=0\})$.
 In particular, $S$ is isomorphic to the surface \textnumero 25.

 If $S$ has no $A_3$-singularities, then Proposition \ref{prop:dP4} implies that $S$ is a K-unstable del Pezzo surface of degree $4$ with at worst $A_2$-singularities admitting an effective $\bG_m$-action. Thus by Table \ref{table:dP4-Gm} we know that $S$ is isomorphic to either \textnumero 29 or \textnumero 33. The classifications of walls and new K-polystable pairs for these two surfaces are completed in Theorems \ref{thm:walls-29} and \ref{thm:walls-33}.

 It remains to prove the statement on the initial wall $c_0=0$. Let $\cM_\epsilon$ and $\cM_0$ be the corresponding K-moduli stacks whose good moduli spaces are $\fM_\epsilon$ and $\fM_0$ respectively. Since K-semistability for coefficients is a closed condition, we know that every $[(S, \Delta)] \in \cM_\epsilon$ satisfies that $S$ is K-semistable. In particular, the forgetful map $[(S,\Delta)]\mapsto [S]$ gives a morphism between Artin stacks $\cM_\epsilon \to \cM_0$. Since good moduli spaces are initial among maps to algebraic spaces \cite[Theorem 6.6]{alper}, the forgetful map induces a morphism $\phi_0:\fM_\epsilon\to \fM_0$ between K-moduli spaces.

 Finally we analyse the map $\phi_0$. Let $[S]\in \fM_0$ be a general point, then we may assume that $S$ is a general K-stable smooth del Pezzo surface of degree $4$. Moreover, we may assume that $\Aut(S)\cong \bmu_2^4$ by \cite[Section~6.4]{DolgachevIskovskikh}. By \cite{Zhou21}, we know that there exists $\epsilon_0>0$ such that $(S, \epsilon \Delta)$ is K-stable for every $\Delta\in |-4K_S|$ and every $\epsilon\in (0, \epsilon_0)$. This implies that the finite quotient $\bP(H^0(S, -4K_S))/\Aut(S)$ admits a closed immersion into the fibre $\phi_0^{-1}([S])$. On the other hand, if $[(S', \Delta')]\in \phi_0^{-1}([S])$, then $S'$ and $S$ are $S$-equivalent K-semistable Fano varieties. Since $S$ is K-stable, by \cite{lwx, blum_xu_uniqueness} we know that $S'\cong S$ which implies that $[(S',\Delta')]$ is contained in the image of $\bP(H^0(S, -4K_S))/\Aut(S)$. Thus we have $\bP(H^0(S, -4K_S))/\Aut(S)\cong \phi_0^{-1}[S]$. By a Riemann-Roch computation, we have $h^0(S, -4K_S) = 41$ which implies that $\bP(H^0(S, -4K_S))/\Aut(S)\cong \bP^{40}/\bmu_2^4$. Since both $\fM_c$ and $\fM_0$ are normal by \cite[Theorem 2.21]{K-moduli_quadric_K3}, the statement follows by Zariski's main theorem.
\end{proof}

\begin{remark}
In \S\ref{sec:toric-examples}, we found three K-polystable toric Fano \threefolds{}  of the form $X_{2,2,4}\subset \bP(1^5, 2^2)$ defined by $y_0y_1 = f(x_0, \cdots, x_4)$, $g(x_0,\cdots, x_4)=0$ and $h(x_0, \cdots, x_4)=0$, where $f$, $g$, and $h$ are of the following:
\begin{enumerate}
    \item $f=x_0^4$, $g=x_2^2 - x_0 x_3$, and $h = x_3^2 - x_1x_4$;
    \item $f = x_0^3 x_3$, $g = x_2^2 - x_0 x_3$, and $h = x_2 x_3 - x_1 x_4$;
    \item $f = x_0^2 x_4^2$, $g=x_2^2 - x_0 x_4$, and $h = x_1 x_3 - x_0 x_4$.
\end{enumerate}
By Proposition \ref{prop:surface-3fold} and Corollary \ref{cor:moduli}, each of the above toric Fano \threefolds{} corresponds to a K-polystable pair $(S, \frac{1}{16}\Delta)$ in $\fM_{\frac{1}{16}}$ where $S=\{g=h=0\}\subset \bP^4$ and $\Delta = \{f=0\}$. It turns out that these three surface pairs are precisely the toric ones in $\fM_{\frac{1}{16}}$:
\begin{enumerate}
    \item corresponds to the surface \textnumero 25 and $\Delta = \{y_4^4=0\}$ via  $[x_0, x_1,x_2,x_3,x_4] = [y_4, y_2^2, y_1y_3, y_1^2 y_2, y_1^4]$;
    \item corresponds to the surface \textnumero 29 and $\Delta = \Delta_{4,\infty}$ via $[x_0, x_1,x_2,x_3,x_4] = [y_1'^2, y_2, y_1y_1', y_1^2, y_2']$;
    \item corresponds to the surface \textnumero 30 and $\Delta = \{y_1^4 y_1'^4=0\}$ via $[x_0, x_1,x_2,x_3,x_4] = [y_1^2, y_2, y_1y_1', y_2', y_1'^2]$.
\end{enumerate}
Here K-polystability of $(S,\frac{1}{2}\Delta)$ from (1) or (2) follows from Theorem \ref{thm:full-wall-crossing}, while (3) follows by interpolation \cite[Proposition 2.13]{wall_crossing_K_moduli} as  \textnumero 30 is K-polystable, and $\frac{1}{4}\Delta$ is the reduced toric boundary.
\end{remark}

\subsection{Further discussions}
Let $M$ be the K-moduli component of quartic \threefolds{}.
While our results provide a decisive statement about the global nature of objects in $M$, several questions remain open. For instance, it would be interesting to determine all the pairs parametrised by $\fM_{\frac{1}{16}}$ via wall crossings. 
Although Theorem \ref{thm:full-wall-crossing} gives a complete description of walls and new K-polystable pairs, one would need to describe the exceptional loci for K-moduli wall crossings, which is similar to the VGIT analysis on basins and attractions.  
We leave this for future studies. 

It would also be handy to have an explicit description of GIT-stable quartic \threefolds{}. Indeed, we think that the locus in $M$ parametrising pure $(2,2,4)$-complete intersections $X_{2,2,4} \subset \PP(1^5,2^2)$ should correspond to the locus of the union of two quadric hypersurfaces $\{gh = 0\}\subset \bP^4$ in the GIT moduli space of quartic \threefolds{} via wall crossing. On the one hand, $\{gh=0\}\subset \bP^4$ degenerates to $\{y_0y_1=0, ~g=h=0\} \subset \bP(1^5,2^2)$  via iterated degeneration to normal cone. On the other hand, $X_{2,2,4}=\{y_0y_1=f,~g=h=0\}\subset \bP(1^5,2^2)$ also degenerates to $\{y_0y_1=0, g=h=0\} \subset \bP(1^5,2^2)$ via a suitable one parameter subgroup. Note that $\dim M = 45$, and the dimension of the locus parametrising pure $(2,2,4)$-complete intersections in $\PP(1^5,2^2)$  is equal to $\dim \fM_{\frac{1}{16}}$ which is $42$.

In a different direction, the following example from \cite[Remark~6.13]{adl_quartic_K3} gives another closed locus in $M$.

\begin{example}
In \cite{adl_quartic_K3}, the K-moduli compactification $\overline{\fM}_{\frac{3}{4}}^{\rm K}$ of $(\bP^3, \frac{3}{4}D)$ where $S$ is a quartic surface was studied. If $(X, \frac{3}{4}D)$ is a K-polystable \threefold{} pair in $\overline{\fM}_{\frac{3}{4}}^{\rm K}$, then we can take the quadruple cyclic cover $Y\to X$ branched along $D$.
By  \cite{LZ22, Zhuang} we know that $Y$ is K-polystable.
Since a quadruple cyclic cover of $\bP^3$ branched along a quartic surface $\{f(x_0, x_1,x_2,x_3)=0\}$ is precisely the quartic \threefold{} $\{x_4^4 = f(x_0,x_1,x_2,x_3)\}\subset\bP^4$, we know that $Y$ is a K-polystable limit of a family of smooth quartic \threefolds{}.
This cyclic cover construction can be made fibrewise (see \cite[\S6.3]{adl_quartic_K3} for the case of quartic double solids), which yields a finite morphism $\iota: \overline{\fM}_{\frac{3}{4}}^{\rm K}\to M $. It is clear that the image of $\iota$ has dimension $19$ and is a K-moduli compactification of smooth quartic \threefolds{} of the form $\{x_4^4 = f(x_0,x_1,x_2,x_3)\}\subset \bP^4$. By \cite[Theorem 5.16]{adl_quartic_K3}, we know that there are four of the Fano \threefold{} $X$ appearing in $\overline{\fM}_{\frac{3}{4}}^{\rm K}$: $\bP^3$, $X_h$, $\bP(1,1,2,4)$, or $X_u$. Here $X_h = \{x_0 x_1 =x_2 x_3\}\subset \bP(1^4, 2)$ which is isomorphic to the projective cone over $\bP^1\times \bP^1$ with polarisation $\cO(2,2)$; $X_u$ is a $2$-step birational modification of a $\bP(1,2,3)$-bundle over $\bP^1$ constructed in \cite[\S4.2]{adl_quartic_K3}. One can check that the quadruple cyclic cover of $X_h$ is a $(2,4)$-complete intersection of $\bP(1^5, 2)$, while the quadruple cyclic cover of $\bP(1,1,2,4)$ is a weighted hypersurface of degree $8$ in $\bP(1,1,2,2,4)$. However, we expect that the quadruple cyclic cover of $X_u$ will not be a weighted complete intersection. Such quadruple cyclic covers of $X_u$ form an $18$-dimensional closed locus in the K-moduli space $M$.
\end{example}

\subsection*{Acknowledgements}
We thank Thomas Hall and Anne-Sophie Kaloghiros for fruitful conversations.

This project was in-part carried out whilst HA visited the Institut de Math\'{e}matiques de Toulouse, supported by LabEx CIMI; IC visited the Institut des Hautes \'{E}tudes Scientifiques; YL visited the University of Edinburgh; and AP visited Osaka University and Kumamoto University. The authors would like to thank the members of these institutions for creating stimulating mathematical environments and excellent scientific activities. HA is supported by EPSRC Grant~EP/V048619. IC is supported by EPSRC Grant~EP/V054597. AK~is supported by EPSRC Fellowship~EP/N022513/1. YL is partially supported by NSF grant DMS-2148266 (formerly DMS-2001317) and an Alfred P. Sloan research fellowship. AP is supported by INdAM GNSAGA ``Gruppo Nazionale per le Strutture Algebriche, Geometriche e le loro Applicazioni'' and PRIN2020 2020KKWT53 ``Curves, Ricci flat Varieties and their Interactions''.

\bibliographystyle{plain}
\bibliography{Biblio_quartic_3folds}
\end{document}